\newtheorem{thm}{Theorem}
\newtheorem{cor}[thm]{Corollary}
\newtheorem{lemma}[thm]{Lemma}
\newtheorem{prop}[thm]{Proposition}
\theoremstyle{definition}
\newcommand{\R}{\mathbb{R}}
\newcommand{\F}{\mathbb{F}}
\newcommand{\E}{\mathbb{E}}
\newcommand{\Prob}{\mathbb{P}}
\newcommand{\N}{\mathbb{N}}
\newcommand{\C}{\mathbb{C}}
\newcommand{\ds}{\displaystyle}
\DeclareMathOperator{\tr}{tr}
\newcommand{\inprod}[2]{\left\langle #1, #2 \right\rangle}
\renewcommand{\Re}{\operatorname{Re}}
\renewcommand{\Im}{\operatorname{Im}}
\newcommand{\abs}[1]{\left\vert #1 \right\vert}
\newcommand{\norm}[1]{\left\Vert #1 \right\Vert}
\newcommand{\bnorm}[1]{\bigl\Vert #1 \bigr\Vert}
\newcommand{\eps}{\varepsilon}
\DeclareMathOperator{\var}{Var}
\newcommand{\Set}[2]{\left\{#1 \mathrel{} \middle| \mathrel{} #2
  \right\}}
\DeclareMathOperator{\diag}{diag}
\newcommand{\Normal}[2]{\mathcal{N}\left(#1,#2 \right)}
\newcommand{\srank}[1]{\,\mbox{\Fontauri{sr}}\,(#1)}
\newcommand{\llambda}{\underline{\lambda}}
\newcommand{\Unitary}[1]{\mathbb{U}\left(#1\right)}
\newcommand{\Orthogonal}[1]{\mathbb{O}\left(#1\right)}
\newcommand{\SOrthogonal}[1]{\mathbb{SO}\left(#1\right)}
\newcommand{\sphere}[1]{\mathbb{S}^{#1}}
\newcommand{\ind}[1]{\mathbbm{1}_{#1}}
\author{Elizabeth S.\ Meckes}
\author{Mark W.\ Meckes}
\address{Department of Mathematics, Applied Mathematics, and
  Statistics, Case Western Reserve University, 10900 Euclid Ave.,
  Cleveland, Ohio 44106, U.S.A.}
\email{elizabeth.meckes@case.edu}
\address{Department of Mathematics, Applied Mathematics, and
  Statistics, Case Western Reserve University, 10900 Euclid Ave.,
  Cleveland, Ohio 44106, U.S.A.}
\email{mark.meckes@case.edu}
\title[Random matrices and random quantum
states]{Random matrices with prescribed eigenvalues and
  expectation values for random quantum states}
\begin{document}

\maketitle

\begin{abstract}
  Given a collection $\llambda=\{\lambda_1, \dots, \lambda_n\} $ of
  real numbers, there is a canonical probability distribution on the
  set of real symmetric or complex Hermitian matrices with eigenvalues
  $\lambda_1,\ldots,\lambda_n$.  In this paper, we study various
  features of random matrices with this distribution.  Our main
  results show that under mild conditions, when $n$ is large, linear
  functionals of the entries of such random matrices have
  approximately Gaussian joint distributions.  The results take the
  form of upper bounds on distances between multivariate
  distributions, which allows us also to consider the case when the
  number of linear functionals grows with $n$.  In the context of
  quantum mechanics, these results can be viewed as describing the
  joint probability distribution of the expectation values of a family
  of observables on a quantum system in a random mixed state.  Other
  applications are given to spectral distributions of submatrices, the
  classical invariant ensembles, and to a probabilistic counterpart of
  the Schur--Horn theorem, relating eigenvalues and diagonal entries
  of Hermitian matrices.
\end{abstract}

\tableofcontents

\section{Introduction}

Let $\lambda_1 \le \lambda_2 \le \dots \le \lambda_n$ be real numbers,
and let $M_n^{\R}(\llambda)$ denote the family of real symmetric
$n\times n$ matrices with eigenvalues
$\llambda=\{\lambda_1, \dots, \lambda_n\}$ (with multiplicity).  The
orthogonal group $\Orthogonal{n}$ acts transitively on $M_n^{\R}(\llambda)$
by conjugation, and from this action $M_n^{\R}(\llambda)$ inherits a
canonical probability measure.  A random matrix chosen according to
this probability measure is distributed as $U \Lambda U^t$, where
$\Lambda = \diag(\lambda_1, \dots, \lambda_n)$, and $U$ is chosen
according to the Haar probability measure on
$\Orthogonal{n}$. Likewise, the family $M_n^{\C}(\llambda)$ of complex
Hermitian matrices with eigenvalues $\lambda_1,\dots, \lambda_n$
possesses a canonical probability measure which is the distribution of
the random matrix $U \Lambda U^*$, where $U$ is now chosen according
to the Haar probability measure on the unitary group $\Unitary{n}$.

In this paper we consider the asymptotic behavior of $d$-dimensional
marginals of these probability measures (sometimes referred to as
isospectral distributions) when $n$ is large.  Such marginals include
in particular the joint distributions of collections entries of random
matrices of the form $U\Lambda U^t$ or $U\Lambda U^*$. Our results
give upper bounds on distances between these marginal distributions
and multivariate Gaussian distributions. Such quantitative estimates
hold for fixed, finite (but large) $n$, which in turn allows us to
consider, as $n \to \infty$, how quickly $d$ may grow with $n$ such
that the Gaussian behavior of $d$-dimensional marginals is
preserved. In typical situations we will see that \emph{all} such marginals
are asymptotically Gaussian, as long as $d \ll \sqrt{n}$;
for many choices of $\Lambda$ and many marginals,
it is in fact only necessary that $d \ll n^{3/2}$.

Our results reverse the situation from classical random matrix theory, which
begins by specifying the joint distributions of the entries of a
random matrix and investigates the resulting joint distribution of the
eigenvalues. This inverse approach reveals a new form of universality:
marginals of high-dimensional random matrices with nearly any
arrangement of prescribed eigenvalues are indistinguishable from
marginals of the Gaussian orthogonal or unitary ensemble. This in turn
puts limits on how far one can hope to extend classical
universality to random matrix ensembles with dependent entries. Weakly
correlated entries, even weakly correlated nearly-Gaussian entries,
turn out to be consistent with almost any kind of spectral behavior.

If the eigenvalues $\lambda_1, \dots, \lambda_n$ are nonnegative and
$\sum_{i=1}^n \lambda_i = 1$, then our results have an important
interpretation in terms of quantum mechanics.  In this case
$\rho = U \Lambda U^*$ is a random density matrix, representing a
mixed state with weights $\{\lambda_i\}$ of a quantum system with an
$n$-dimensional state space $\mathcal{H} = \C^n$. (See for example
\cite{Wootters,BeZy,Nechita} for general information on various models
of random mixed quantum states, and \cite{OsKu,OsAuGoKoAcLe} for
earlier work considering this precise model.)  If $B_1, \dots, B_d$
are $d$ linearly independent observables on $\mathcal{H}$, then the
joint probability distribution of their expectation values in the
state $\rho$ is a $d$-dimensional marginal of the distribution of
$\rho$, and therefore, by our results, is distributed approximately as
a $d$-dimensional Gaussian random vector.  Moreover, if
$\mathcal{H} = \mathcal{H}_1 \otimes \dots \otimes \mathcal{H}_d$ and
each $B_i$ arises as an observable on $\mathcal{H}_i$ (that is, we are
considering a compound system and separately observing the component
systems), then the expectation values $\langle B_i \rangle$ are
approximately distributed as \emph{uncorrelated} jointly Gaussian
random variables. Besides random density matrices with fixed
eigenvalues, our results cover induced random density matrices, which
arise as quantum marginals of random pure states on compound quantum
systems.

Random matrices of the form $U \Lambda U^*$ are familiar in free
probability, where stochastically independent matrices of this type
are used to asymptotically model freely independent noncommutative
random variables.  Free probability is chiefly concerned with the
asymptotic spectral distributions of functions of families of such
random matrices (where $\Lambda = \Lambda_n$ has a known limiting
spectral measure when $n \to \infty$), in contrast to our interest
here in linear projections. These two lines of inquiry intersect in
the study of the limiting spectral behavior of submatrices, sometimes
referred to as free compressions (see e.g.\ \cite{NiSp}).  By
combining our main result with a quantitative version of Wigner's
semicircle law, we are able to deduce that submatrices of a range of
sizes have asymptotically semicircular spectral distributions.

In addition to deterministic $\Lambda$, we can by conditioning allow
$\Lambda$ to be random and independent of $U$.  Such a construction
produces exactly that class of distributions on real symmetric
(respectively, complex Hermitian) matrices which are invariant under
orthogonal (respectively, unitary) conjugation, including the so-called
unitarily invariant ensembles.

Finally, specializing our results to the diagonal entries of
$U \Lambda U^*$ lets us investigate in a natural way the ``typical''
relationship between the eigenvalues and diagonal entries of a real
symmetric or complex Hermitian matrix.  In this way we find, in
Theorem \ref{T:Schur-Horn}, a probabilistic counterpart of the
Schur--Horn theorem, which characterizes pairs of $n$-tuples which
occur as the eigenvalues and diagonal entries of some real symmetric
matrix. Theorem \ref{T:Schur-Horn} is thus a Hermitian analogue of the single ring theorem
\cite{GuKrZe,RuVe}, which can be viewed as a probabilistic counterpart
of the Weyl--Horn theorem.

Our main technical tool is a multivariate version of Stein's method
for normal approximation developed in \cite{ChMe}, which was used in
that paper to prove similar results for marginals of the entries of
Haar-distributed random matrices; see \cite{DoSt,Stolz,JoSmWe,JoSm}
for additional applications of this method in random matrix theory.
There are at least two other well-established approaches to studying
random matrices of the form considered here.  First, the so-called
Harish-Chandra--Itzykson--Zuber integral formula \cite{Harish-Chandra}
gives an exact expression for the Fourier transform of the
distribution of $U \Lambda U^*$.  Second, the Weingarten calculus
developed in \cite{Collins,CoSn,CoNe-rqc} gives combinatorial
expressions for joint moments or joint cumulants of entries
Haar-distributed random matrices, thus more indirectly of matrices of
our form. The chief advantage of Stein's method over these other
approaches are that it automatically yields quantitative bounds on
distances between distributions. We recall that the explicit
dependence of our bounds on the projection dimension $d$ are a crucial
aspect of our results.  Such quantitative bounds are rather delicate
work to deduce using Fourier-analytic methods (and the HCIZ formula is
itself awkwardly suited for high-dimensional analysis; cf.\
\cite{GuZe-lda,GuMa}), and are generally out of reach using moments.
Nevertheless, it may be that our main results could be sharpened
somewhat by combining our Stein's method approach with the Weingarten
calculus; see the comments after the proofs of Theorems \ref{T:DL-T0-C}
and \ref{T:R1-T0-C} in Section \ref{S:proofs-main}.

In the remainder of this introduction we will state our results and
expand on the above discussion, deferring the proofs to the later
sections.

\subsection{Main results}

We first establish some notation and terminology.

We denote by $M_n(\F)$ the space of $n \times n$ matrices with entries
in the field $\F$. We denote by $M_n^{sa}(\R)$ the space of real
symmetric matrices and by $M_n^{sa}(\C)$ the space of complex
Hermitian matrices.  For either $\F=\R$ or $\C$, recall that
$M_n^{sa}(\F)$ is a real vector space which is equipped with the
(real) Hilbert--Schmidt inner product
$\inprod{A}{B} = \tr(AB^*) = \tr (AB)$.

For $A \in M_n(\C)$ and $1 \le p \le \infty$, $\norm{A}_p$ denotes the
Schatten $p$-norm of $A$, which is the $\ell_p$ norm of the sequence
of singular values of $A$.  If $A \in M_n^{sa}(\C)$ then $\norm{A}_p$
is equal to the $\ell_p$ norm of the sequence of eigenvalues of
$A$.  In the special cases $p = 2, \infty$ this reduces to the
Hilbert--Schmidt and operator norms, which we also denote by
\[
\norm{A}_{HS} = \norm{A}_2 = \sqrt{\tr (AA^*)}
\]
and
\[
\norm{A}_{op} = \norm{A}_\infty
\]
respectively.  The stable rank (also called numerical rank) of a
matrix $A$ is $\srank{A} = \frac{\norm{A}_{HS}^2}{\norm{A}_{op}^2}$.
Note that for any nonzero matrix $A$,
$1 \le \srank{A} \le \operatorname{rank} A$; roughly speaking,
$\srank{A}$ is small when $A$ is a small perturbation of a matrix with
small rank.

Results below
are often most easily formulated for traceless matrices; given $B \in
M_n^{sa}(\C)$, we will use the notation $\widetilde{B}$ for the
traceless recentering of $B$, i.e.,
\begin{equation}\label{E:traceless}
\widetilde{B} = B - \frac{1}{n} (\tr B) I_n.
\end{equation}

If $X$ is a random vector in a real Hilbert space $\mathcal{H}$ with
probability distribution $\mu$ and $V$ is a $d$-dimensional subspace
of $\mathcal{H}$, then the marginal of $\mu$ on $V$ is the
distribution of $\pi_V(X)$, where $\pi_V$ denotes orthogonal
projection. Such a marginal can be represented in coordinates by the
vector $(\inprod{X}{v_1},\dots, \inprod{X}{v_d}) \in \R^d$, where
$v_1, \dots, v_d$ is a fixed orthonormal basis of $V$. 
 
We denote by $\Normal{\mu}{\Sigma}$ the Gaussian distribution on
$\R^d$ with mean $\mu \in \R^d$ and covariance matrix
$\Sigma \in M_d^{sa}(\R)$.  A standard Gaussian random vector, i.e., one with $\mu=0$ and $\Sigma=I_d$, is denoted by $g=(g_1,\ldots,g_d)$ throughout.  The Gaussian Unitary Ensemble (GUE) can be
defined as a standard Gaussian random vector in the Hilbert space
$M_n^{sa}(\C)$, and the Gaussian Orthogonal Ensemble (GOE) can be
defined as a $\sqrt{2}$ times a standard Gaussian random vector in
$M_n^{sa}(\R)$.

We will mostly quantify the approximation of probability measures using the
Wasserstein (or Kantorovich) metrics (see e.g.\ \cite{Villani} for
extensive discussion of these metrics). If $X$ and $Y$ are random
vectors in $\R^d$, the $L_p$-Wasserstein distance between them (or
more properly, between their distributions) may be defined as
\[
W_p(X,Y) = \inf\Set{\left(E\norm{Z_1-Z_2}^p\right)^{1/p}}{Z_1
  \overset{d}{=} X, Z_2 \overset{d}{=} Y};
\]
that is, the infimum is taken over couplings of $X$ and $Y$.  By the
Kantorovich--Rubenstein Theorem, the $L_1$-Wasserstein distance is the
same as
\[
W_1 (X,Y) = \sup_{\abs{f}_L \le 1} \abs{\E f(X) - \E f(Y)},
\]
where $\abs{f}_L$ denotes the Lipschitz constant of a function
$f:\R^d \to \R$.  We will occasionally also use the total variation
distance
\[
d_{TV}(X,Y) = \frac{1}{2} \sup_{\norm{f}_{\infty} \le 1} \abs{\E f(X)
  - \E f(Y)}.
\]

\medskip

Theorem \ref{T:DL-T0-R} below is our main result in the real symmetric
case. Observe that if we fix the eigenvalues of a matrix, then we have
fixed its trace.  It is therefore natural (and convenient for the
proof) to begin by considering only coefficient matrices that lie in
the subspace $\Set{B \in M_n^{sa}(\R)}{\tr B = 0}$; we will remove this
restriction below.

\begin{thm}
  \label{T:DL-T0-R}
  Let $\Lambda \in M_n(\R)$ be a nonscalar diagonal matrix, and define
  $A = U \Lambda U^t$, where $U$ is a Haar-distributed random matrix
  from $\Orthogonal{n}$.  Let $B_1, \dots, B_d \in M_n^{sa}(\R)$
  satisfy $\tr B_i B_j = \delta_{ij}$ and $\tr B_i = 0$, and define
  the random vector $X \in \R^d$ by $X_i = \tr A B_i$.  

  Then
  \begin{equation}
    \label{E:DL-T0-R-W}
    \begin{split}
    W_1\left(\frac{\sqrt{(n-1)(n+2)}}{\sqrt{2} \bnorm{\widetilde{\Lambda}}_{HS}}
      X, g\right) 
    & \le 8 \sqrt{2} \frac{\sqrt{n-1}(n+2)}{n}
    \frac{\bnorm{\widetilde{\Lambda}}_{op}^2}{\bnorm{\widetilde{\Lambda}}_{HS}^2}
    \sum_{i=1}^d \norm{B_i}_{op}^2
    \\
    & = 8 \sqrt{2}
    \frac{\sqrt{n-1}(n+2)}{n \srank{\widetilde{\Lambda}}}
    \sum_{i=1}^d \frac{1}{\srank{B_i}},
    \end{split}
  \end{equation}
where $\widetilde{\Lambda}=\Lambda-\frac{1}{n}(\tr(\Lambda))I_n$.

  If $d=1$ then also 
  \[
    d_{TV} \left(\frac{\sqrt{(n-1)(n+2)}}{\sqrt{2}
        \bnorm{\widetilde{\Lambda}}_{HS}} X, g\right) \le 16 \sqrt{2}
    \frac{\sqrt{n-1}(n+2)}{n}
    \frac{\bnorm{\widetilde{\Lambda}}_{op}^2}{\bnorm{\widetilde{\Lambda}}_{HS}^2}
    \norm{B_1}_{op}^2.
  \]
\end{thm}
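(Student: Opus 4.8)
The plan is to use the exchangeable-pair form of multivariate Stein's method from \cite{ChMe}, with the pair generated by small random rotations; equivalently one can organize the computation through the Laplacian on $\Orthogonal{n}$. For a small angle $\theta$ let $R_\theta\in\Orthogonal{n}$ rotate by $\pm\theta$ (uniform sign) in a uniformly random coordinate plane, independently of $U$, and set $A_\theta=UR_\theta\Lambda R_\theta^tU^t$, $X_\theta^{(i)}=\tr(A_\theta B_i)$, so that $(X,X_\theta)$ is exchangeable. Expanding $R_\theta\Lambda R_\theta^t-\Lambda$ in the chosen plane and using that $C_i:=U^tB_iU$ is traceless together with the sign symmetry (odd powers of $\sin\theta$ cancel), one obtains the exact linear regression $\E[X_\theta-X\mid U]=-\tfrac{2\sin^2\theta}{n-1}\,X$ and
\[
  \E\bigl[(X_\theta^{(i)}-X^{(i)})(X_\theta^{(j)}-X^{(j)})\mid U\bigr]=\tfrac{4\sin^2\theta}{n-1}\,S_{ij}+O(\sin^4\theta),\qquad S_{ij}=\tfrac1n\inprod{[\widetilde A,B_i]}{[\widetilde A,B_j]},
\]
with $\widetilde A=U\widetilde\Lambda U^t$; the reduction from $A$ to $\widetilde A$ uses that $[B_i,B_j]$ is antisymmetric, hence orthogonal to the symmetric $\widetilde A$. (Via the Laplacian this reads $\mathcal LX=-nX$ and $\tfrac1n\sum_\alpha(\partial_{Z_\alpha}X)(\partial_{Z_\alpha}X)^t=S$ for an orthonormal basis $\{Z_\alpha\}$ of antisymmetric matrices, using $\partial_Z\widetilde A=[Z,\widetilde A]$, $\sum_\alpha Z_\alpha^2=-\tfrac{n-1}2I$, and $\sum_\alpha[Z_\alpha,[Z_\alpha,\cdot\,]]=-n\,(\cdot\,)$ on traceless symmetric matrices.) Letting $\theta\to0$ kills the third-moment remainder term of the \cite{ChMe} bound, and with the Stein equation for $g'\sim\Normal{0}{\Sigma}$, $\Sigma=\E[XX^t]=\E S$, one is left with $W_1(X,g')\le\norm{\Sigma^{-1/2}}_{op}\sqrt{\pi/2}\;\E\norm{S-\E S}_{HS}$.

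Next I would pin down $\Sigma$. From $\E[\widetilde A_{ab}\widetilde A_{cd}]=\tfrac{\bnorm{\widetilde\Lambda}_{HS}^2}{(n-1)(n+2)}\bigl(\delta_{ac}\delta_{bd}+\delta_{ad}\delta_{bc}-\tfrac2n\delta_{ab}\delta_{cd}\bigr)$ together with $\tr B_iB_j=\delta_{ij}$, $\tr B_i=0$, one gets $\Sigma=\tfrac{2\bnorm{\widetilde\Lambda}_{HS}^2}{(n-1)(n+2)}I_d$. This is exactly what makes $c:=\tfrac{\sqrt{(n-1)(n+2)}}{\sqrt2\,\bnorm{\widetilde\Lambda}_{HS}}$ the correct normalization in the statement; after replacing $X$ by $cX$ we may take $\Sigma=I_d$, so the bound becomes $W_1\le\sqrt{\pi/2}\;c^2\E\norm{S-\E S}_{HS}$.

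The heart of the matter — and the step I expect to be the main obstacle — is to show $\E\norm{S-\E S}_{HS}\lesssim \bnorm{\widetilde\Lambda}_{op}^2\,n^{-3/2}\sum_{i}\norm{B_i}_{op}^2$; after multiplying by $c^2$ this is exactly \eqref{E:DL-T0-R-W}. I would bound $\E\norm{S-\E S}_{HS}\le\bigl(\sum_{i,j}\var S_{ij}\bigr)^{1/2}$ and estimate each $\var S_{ij}=n^{-2}\var\inprod{v_i}{v_j}$ (with $v_i=[\widetilde A,B_i]$) via concentration of measure on $\Orthogonal{n}$, whose Hilbert--Schmidt metric has spectral gap of order $n$: thus $\var\inprod{v_i}{v_j}\lesssim n^{-1}\E\bigl[\sum_\alpha(\partial_{Z_\alpha}\inprod{v_i}{v_j})^2\bigr]$, and since $\partial_Z\inprod{v_i}{v_j}=\inprod{[[Z,\widetilde A],B_i]}{v_j}+\inprod{v_i}{[[Z,\widetilde A],B_j]}$, the gradient is controlled by $\bnorm{[\widetilde A,[B_i,v_j]]}_{HS}$ and its $(i,j)$-swap. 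The delicate point is the bookkeeping of the nested commutators: one balances $\norm{[\widetilde A,\cdot\,]}_{HS}\le2\bnorm{\widetilde\Lambda}_{op}\norm{\cdot}_{HS}$ against $\norm{[B_i,\cdot\,]}\le2\norm{B_i}_{op}\norm{\cdot}$, uses the identities $\sum_\alpha\bnorm{[Z_\alpha,\widetilde A]}_{HS}^2=n\bnorm{\widetilde\Lambda}_{HS}^2$ and $\sup_{\norm a=1}\bnorm{[\widetilde A,\sum_ia_iB_i]}_{HS}\le2\bnorm{\widetilde\Lambda}_{op}$ (the orthonormality of the $B_i$ enters here), and must exploit the extra smallness built into the commutator form of $S$ — e.g.\ $S_{ii}=0$ whenever $\widetilde A$ commutes with $B_i$, and, on average over Haar $U$, $\E\bnorm{v_i}_{HS}^2=n\,\E S_{ii}\sim\bnorm{\widetilde\Lambda}_{HS}^2/n$ is much smaller than the worst-case $\bnorm{\widetilde\Lambda}_{op}^2$ — in order to collapse the $B$-dependence to $\norm{B_i}_{op}^2\norm{B_j}_{op}^2$ and the $\widetilde\Lambda$-dependence to $\bnorm{\widetilde\Lambda}_{op}^4$. (An alternative is to compute $\E\norm{S-\E S}_{HS}^2$ directly from the fourth-moment formula for $\widetilde A$, estimating the higher Schatten norms of $\widetilde\Lambda$ that appear by $\bnorm{\widetilde\Lambda}_{op}^{k-2}\bnorm{\widetilde\Lambda}_{HS}^2$.) Getting precisely the factor $\sum_i\norm{B_i}_{op}^2=\sum_i1/\srank{B_i}$ — rather than a cruder bound such as $\bigl(d\sum_i\norm{B_i}_{op}^2\bigr)^{1/2}$ — is what makes the result strong enough to let $d$ grow with $n$, and is the technically demanding part.

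Finally, the total-variation bound for $d=1$ follows the same route with the one-dimensional Stein equation $\sigma^2h'(w)-wh(w)=f(w)-\E f(g)$: the generator identity now gives $\E f(X)-\E f(g)=\E[(\sigma^2-S)h'(X)]$ with $S=n^{-1}\bnorm{[\widetilde A,B_1]}_{HS}^2$ and $\sigma^2=\E S$, and since the solution of the one-dimensional Gaussian total-variation Stein equation for $\norm f_\infty\le1$ satisfies $\norm{h'}_\infty\le2$ (against the $\sqrt{\pi/2}$-type constant governing $W_1$), the same variance estimate produces the stated bound with $16\sqrt2$ in place of $8\sqrt2$.
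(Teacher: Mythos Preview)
Your approach is essentially the paper's: Stein's exchangeable-pairs method via infinitesimal random rotations, identification of the error term through the commutators $[\widetilde A,B_i]$, and a Poincar\'e (spectral-gap) inequality on the orthogonal group to control the variance. Your computation of $\Sigma$ and the normalizing constant matches the paper exactly.

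The one place you diverge is in overestimating the difficulty of the variance bound. You propose nested-commutator bookkeeping, averaged gradient estimates, and suggest one must ``exploit the extra smallness built into the commutator form of $S$'' on average over $U$. The paper needs none of this: it simply bounds the \emph{pointwise} Lipschitz constant of $U\mapsto\tr\bigl([U\widetilde\Lambda U^t,B_i][U\widetilde\Lambda U^t,B_j]\bigr)$, using only $\norm{[A,B]}_{HS}\le2\norm{A}_{HS}\norm{B}_{op}$, $\norm{U\Lambda U^t-V\Lambda V^t}_{HS}\le2\norm{\Lambda}_{op}\norm{U-V}_{HS}$, and Cauchy--Schwarz, to obtain the Lipschitz constant $16\norm{B_i}_{op}\norm{B_j}_{op}\bnorm{\widetilde\Lambda}_{op}^2$ in three lines. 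Poincar\'e then gives $\var S_{ij}\le C n^{-3}\norm{B_i}_{op}^2\norm{B_j}_{op}^2\bnorm{\widetilde\Lambda}_{op}^4$, and since $\sum_{i,j}\norm{B_i}_{op}^2\norm{B_j}_{op}^2=\bigl(\sum_i\norm{B_i}_{op}^2\bigr)^2$, taking the square root over $i,j$ yields $\sum_i\norm{B_i}_{op}^2$ directly with no loss. The step you flag as ``technically demanding'' is in fact the shortest part of the argument; no averaging, no ``extra smallness,'' and no fourth-moment computation is required.
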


We emphasize that in Theorem \ref{T:DL-T0-R} and many of the results below, the bounds given are completely explicit and hold as soon as $n\ge 2$.

Theorem \ref{T:DL-T0-R} shows that marginals of the distribution of
the random matrix $A$ are close to  those of the GOE. Indeed, if $V$ is
a $d$-dimensional subspace of $\Set{B \in M_n^{sa}(\R)}{\tr B = 0}$
and $G$ is an $n \times n$ GOE random matrix, then
$\frac{1}{\sqrt{2}}\pi_V(G)$ has a standard Gaussian distribution on
$V$. Thus the left-hand side of the inequality \eqref{E:DL-T0-R-W} is
precisely
\[
W_1 \left(\frac{\sqrt{(n-1)(n+2)}}{\sqrt{2}
    \bnorm{\widetilde{\Lambda}}_{HS}} \pi_V(A),
  \frac{1}{\sqrt{2}}\pi_V(G)\right) = \frac{1}{\sqrt{2}} W_1
\left(\frac{\sqrt{(n-1)(n+2)}}{\bnorm{\widetilde{\Lambda}}_{HS}}
  \pi_V(A), \pi_V(G)\right).
\]

The following application of Theorem \ref{T:DL-T0-R} is illustrative.
Suppose that $n$ is even, and that $\frac{n}{2}$ of the diagonal
entries of $\Lambda$ are equal to $\sqrt{n}$, and $\frac{n}{2}$ of
them are equal to $-\sqrt{n}$.  Theorem \ref{T:DL-T0-R} implies that
\begin{equation}
  \label{E:nice-bound}
W_1 \left(\pi_V(A), \pi_V(G)\right) \le
C \frac{d}{\sqrt{n}}
\end{equation}
for some absolute constant $C$, and any $d$-dimensional subspace $V$.
Thus all the $d$-dimensional marginals of $A$ (on the subspace of
trace-zero matrices) are close to Gaussian as long as
$d \ll \sqrt{n}$, although the spectrum of $A$ is very different from
the spectrum of the GOE.  (As mentioned above, the trace-zero
restriction will be removed below.)

Alternatively, suppose that the $B_i$ all have stable
rank of the order of $n$. In that case the same bound as in
\eqref{E:nice-bound} applies for an arbitrary nonzero $\Lambda$; for
the specific $\Lambda$ suggested above, the bound  improves to
\[
W_1 \left(\pi_V(A), \pi_V(G)\right) \le
C \frac{d}{n^{3/2}}.
\]
This improvement in the normal approximation of the random vector $X$
for essentially high-rank coefficient matrices $B_i$ is a quadratic
counterpart to a phenomenon uncovered in \cite{KeMeSi}.  It is shown
there that the best rate of convergence for $\Re \tr BU$, where $U$ is
a Haar-distributed random matrix from $\Unitary{n}$, depends on the
asymptotic behavior of the singular values of $B$.  As in our case,
the slowest convergence rate occurs for $\operatorname{rank} B = 1$.

Theorem \ref{T:DL-T0-C} is our main result in the complex Hermitian
case. 

\begin{thm}
  \label{T:DL-T0-C}
  Let $\Lambda \in M_n(\R)$ be a nonscalar diagonal matrix, and define
  $A = U \Lambda U^*$, where $U$ is a Haar-distributed random matrix
  from $\Unitary{n}$.  Let $B_1, \dots, B_d \in M_n^{sa}(\C)$ satisfy
  $\tr B_i B_j = \delta_{ij}$ and $\tr B_i = 0$, and define the random
  vector $X \in \R^d$ by $X_i = \tr A B_i$.  

  Then
  \begin{equation}
    \label{E:DL-T0-C-W}
    W_1 \left(\frac{\sqrt{n^2-1}}{\bnorm{\widetilde{\Lambda}}_{HS}} X,g\right) 
    \le 8 \sqrt{n}
    \frac{\bnorm{\widetilde{\Lambda}}_{op}^2}{\bnorm{\widetilde{\Lambda}}_{HS}^2}
    \sum_{i=1}^d \norm{B_i}_{op}^2
    = \frac{8 \sqrt{n}}{\srank{\widetilde{\Lambda}}}
    \sum_{i=1}^d \frac{1}{\srank{B_i}},
  \end{equation}
  where $\widetilde{\Lambda}=\Lambda-\frac{1}{n}(\tr(\Lambda))I_n$.

  If $d=1$ then also
  \begin{equation*}
    d_{TV} \left(\frac{\sqrt{n^2-1}}{\bnorm{\widetilde{\Lambda}}_{HS}} X,g\right) 
    \le 16\sqrt{n}
    \frac{\bnorm{\widetilde{\Lambda}}_{op}^2}{\bnorm{\widetilde{\Lambda}}_{HS}^2}
    \norm{B_1}_{op}^2.
  \end{equation*}
\end{thm}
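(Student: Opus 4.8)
The plan is to apply the multivariate Stein's method machinery of \cite{ChMe}: one constructs, for the random vector $X=(\tr AB_1,\dots,\tr AB_d)$ with $A=U\Lambda U^*$, an exchangeable pair $(X,X')$ obtained by a small random rotation $U'=Ue^{tH}$ (or $U' = U(I+tH)$ to first order) with $H$ a suitably normalized random Hermitian generator, and then verifies the two approximate-Stein conditions: $\E[X'-X\mid X]\approx -\tfrac{1}{\tau}\,\alpha\, X$ and $\E[(X'-X)(X'-X)^*\mid X]\approx \tfrac{2}{\tau}\,\alpha\,\Sigma$, where $\Sigma$ is the limiting covariance. Here the key algebraic input is the infinitesimal action: if $U_t = Ue^{tH}$ then $\frac{d}{dt}\big|_{t=0}\tr(U_t\Lambda U_t^* B_i) = \tr(\Lambda[H,\,U^*B_iU]) $, and the second derivative produces the double-commutator terms. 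Averaging $H$ over (a small piece of) the Lie algebra $\mathfrak{u}(n)$ — concretely, taking $H$ itself Haar-rotated or taking $H = E_{jk}-E_{kj}$ type generators with random indices — turns these commutator expressions into traces against $\widetilde\Lambda$ and the $B_i$'s. The normalization $\sqrt{n^2-1}/\|\widetilde\Lambda\|_{HS}$ is exactly what makes the covariance of $X$ equal the identity: indeed a direct computation of second moments of $U\Lambda U^*$ via the known value of $\E[U_{ab}\overline{U_{cd}}U_{ef}\overline{U_{gh}}]$ for Haar unitary $U$ gives $\E[(\tr AB_i)(\tr AB_j)] = \frac{\|\widetilde\Lambda\|_{HS}^2}{n^2-1}\delta_{ij}$ when the $B_i$ are traceless and orthonormal, so the stated rescaling is forced.

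The concrete steps, in order, are: (1) Record the first- and second-order expansions of $\tr(U_t\Lambda U_t^*B_i)$ in $t$, isolating the commutator $[H,\cdot]$ structure. (2) Choose the distribution of $H$ and compute the conditional first moment $\E[X'-X\mid U]$, showing it equals $-\lambda X$ up to a controlled error, with $\lambda$ of order $t^2$ (the relevant time scale $\tau$). (3) Compute the conditional second moment $\E[(X_i'-X_i)(X_j'-X_j)\mid U]$; the leading term should be $2\lambda\,\delta_{ij}$ after the correct normalization, with an error term that is where the $\|\widetilde\Lambda\|_{op}$ and $\|B_i\|_{op}$ factors enter. (4) Bound the remainder/third-order term $\E\|X'-X\|^3 = o(\lambda)$. (5) Feed these three estimates into the abstract $W_1$ (and, for $d=1$, $d_{TV}$) bound from \cite{ChMe}; the extra factor of $2$ between the two displayed bounds is the usual cost of passing from $W_1$-type to total-variation-type conclusions in that framework, available only in dimension one. (6) Translate the operator-norm bound into the stable-rank form using $\|B_i\|_{op}^2 = \|B_i\|_{HS}^2/\srank{B_i} = 1/\srank{B_i}$ and $\|\widetilde\Lambda\|_{op}^2/\|\widetilde\Lambda\|_{HS}^2 = 1/\srank{\widetilde\Lambda}$.

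The main obstacle is step (3): controlling the fluctuations of the conditional covariance matrix $\E[(X'-X)(X'-X)^*\mid U]$ around its target $2\lambda I_d$. The diagonal-of-$\Lambda$ structure means the commutator $[H, U^*B_iU]$ mixes the eigenbasis of $A$ with that of $B_i$, and one must show that the resulting quadratic expressions in $\Lambda$ and in $B_i$ concentrate; the natural bound involves $\tr\widetilde\Lambda^2 \cdot \|B_i\|_{op}^2$ rather than $\|\widetilde\Lambda\|_{op}^2\cdot\|B_i\|_{HS}^2$, and getting the operator norm onto $\widetilde\Lambda$ (which is what gives the favorable $\srank{\widetilde\Lambda}$ dependence and hence the $d/\sqrt n$ rates) requires carefully choosing which matrix "absorbs" the Hilbert–Schmidt norm at each step and using $|\tr(CD)| \le \|C\|_{op}\|D\|_{1}$ or Cauchy–Schwarz in the right order. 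A secondary technical point is handling the difference between the real-orthogonal and complex-unitary cases cleanly; since the excerpt's Theorem~\ref{T:DL-T0-C} is the complex version, one works over $\mathfrak{u}(n)$ throughout and the constants ($n^2-1$ rather than $(n-1)(n+2)$, and $8\sqrt n$ rather than $8\sqrt2\,\sqrt{n-1}(n+2)/n$) come out of the exact Weingarten-type second-moment formulas for the unitary group, which are slightly simpler than their orthogonal analogues.
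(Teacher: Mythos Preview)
Your overall framework matches the paper's: Stein's method via the exchangeable-pairs theorem of \cite{ChMe}, with the pair built by a small random rotation of $U$. Steps (1), (2), (4), (5), (6) are essentially as in the paper. The paper's concrete rotation is $U\mapsto UV_\eps$ where $V_\eps = V R_\eps V^*$, $V$ Haar-independent of $U$, and $R_\eps$ a rotation by angle $\arcsin\eps$ in the first two coordinates; the moment computations in steps (1)--(2) are carried out using explicit formulas for $\E[\tr(QF)\tr(QG)]$ with $Q = v_1v_2^* - v_2v_1^*$, yielding $\alpha = 2n/(n^2-1)$ and $\sigma^2 = \bnorm{\widetilde\Lambda}_{HS}^2/(n^2-1)$ as you anticipate.

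The gap is in step (3). You correctly identify that the whole game is bounding $\E\norm{F}_{HS}$ where $F_{ij}$ is, up to constants, the centered random variable $\tr\bigl([A,B_i][A,B_j]^*\bigr) - \E\tr\bigl([A,B_i][A,B_j]^*\bigr)$, but your proposed mechanism---trace inequalities and ``choosing which matrix absorbs the Hilbert--Schmidt norm''---does not by itself bound a \emph{variance}. The quantity $F_{ij}$ is a degree-$4$ polynomial in the entries of $U$, and pointwise H\"older-type bounds on $\tr([A,B_i][A,B_j]^*)$ control its size, not its fluctuation around the mean. The paper's key move here is instead to invoke the Poincar\'e inequality on $\Unitary{n}$ (spectral gap $\lambda_1 = n$): $\var f(U) \le \frac{1}{n}\abs{f}_L^2$. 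One then needs only a Lipschitz estimate for $U\mapsto \tr\bigl([U\Lambda U^*,B][U\Lambda U^*,C]^*\bigr)$, and the chain of elementary bounds $\norm{[A,B]}_{HS}\le 2\norm{A}_{HS}\norm{B}_{op}$ and $\norm{U\Lambda U^*-V\Lambda V^*}_{HS}\le 2\norm{\Lambda}_{op}\norm{U-V}_{HS}$ delivers $\abs{f}_L\le 16\norm{B}_{op}\norm{C}_{op}\norm{\Lambda}_{op}^2$. This is precisely where the operator norms enter, and the $\sqrt{n}$ in the final bound is the $1/\sqrt{n}$ from Poincar\'e. The paper remarks that one \emph{could} instead compute these variances directly via degree-$8$ Weingarten integrals, which is closer in spirit to what you sketch, but that route is substantially heavier and is only pursued in the rank-one setting of Theorem~\ref{T:R1-T0-C}.
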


As above, Theorem \ref{T:DL-T0-C} shows that marginals of the
distribution of the complex Hermitian version of the random matrix $A$
are close to those of the GUE: if $V$ is a $d$-dimensional
subspace of $\Set{B \in M_n^{sa}(\C)}{\tr B = 0}$ and $G$ is now an
$n \times n$ GUE random matrix, then the left-hand side of
\eqref{E:DL-T0-C-W} is equal to
\[
W_1 \left(\frac{\sqrt{n^2-1}}{\norm{\Lambda}_{HS}} \pi_V(A),
  \pi_V(G)\right).
\]
The same specific example discussed above (eigenvalues of $A$ evenly
split between $\pm \sqrt{n}$) serves as a useful prototype for Theorem
\ref{T:DL-T0-C} as well.

\medskip
 
For the sake of brevity, from this point on we will explicitly state
our results only in the complex Hermitian version, although all the
results below have real symmetric counterparts which differ only in
the constants which appear.  We will also omit further estimates in
total variation for the univariate case.  

\medskip

As mentioned above, the assumptions on the coefficient matrices $B_i$
can be removed by a suitable affine transformation.

\begin{cor}
  \label{T:DL-B-C}
  Let $\Lambda \in M_n(\R)$ be a nonscalar diagonal matrix, and define
  $A = U \Lambda U^*$, where $U$ is a Haar-distributed random matrix
  from $\Unitary{n}$.  Let $B_1, \dots, B_d \in M_n^{sa}(\C)$ and let
  $\widetilde{B_j} = B_j - \frac{1}{n} (\tr B_j) I_n$.  Define
  $\Sigma \in M_d(\R)$ by
  $\Sigma_{ij} = \tr \widetilde{B_i} \widetilde{B_j}$ and $v \in \R^d$
  by $v_i = \frac{1}{n} (\tr \Lambda) (\tr B_i)$.  Define the random
  vector $X \in \R^d$ by $X_i = \tr A B_i$.  Then
  \[
    W_1\left(X,\frac{\bnorm{\widetilde{\Lambda}}_{HS}}{\sqrt{n^2-1}}
      \Sigma^{1/2} g + v\right) \le
    \frac{8 d}{\sqrt{n-1}} \bnorm{\Sigma^{1/2}}_{op} \frac{\bnorm{\widetilde{\Lambda}}_{op}^2
    }{\bnorm{\widetilde{\Lambda}}_{HS}}.
  \]
\end{cor}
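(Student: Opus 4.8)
The plan is to reduce Corollary~\ref{T:DL-B-C} to Theorem~\ref{T:DL-T0-C} by an affine change of variables on both the coefficient matrices and on $\Lambda$. First I would observe that $\tr A B_i$ depends on $B_i$ only through the decomposition $B_i = \widetilde{B_i} + \frac{1}{n}(\tr B_i) I_n$: since $\tr A = \tr \Lambda$, we have $\tr A B_i = \tr A \widetilde{B_i} + \frac{1}{n}(\tr \Lambda)(\tr B_i) = \tr A \widetilde{B_i} + v_i$. Thus $X = Y + v$ where $Y_i = \tr A \widetilde{B_i}$ and the $\widetilde{B_i}$ are traceless; moreover $\tr A \widetilde{B_i} = \tr \widetilde{A} \widetilde{B_i}$ with $\widetilde{A} = U\widetilde{\Lambda} U^*$, so replacing $\Lambda$ by $\widetilde{\Lambda}$ changes nothing. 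The translation by $v$ is harmless for $W_1$ (both sides get shifted), so it suffices to bound $W_1\bigl(Y, \frac{\bnorm{\widetilde{\Lambda}}_{HS}}{\sqrt{n^2-1}}\Sigma^{1/2} g\bigr)$.

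The remaining issue is that the $\widetilde{B_i}$ are not orthonormal; they have Gram matrix $\Sigma_{ij} = \tr \widetilde{B_i}\widetilde{B_j}$. The natural fix is to pass to an orthonormal system spanning the same space. Assuming first that $\Sigma$ is invertible, set $C_j = \sum_k (\Sigma^{-1/2})_{jk} \widetilde{B_k}$; then $\tr C_i C_j = \sum_{k,\ell}(\Sigma^{-1/2})_{ik}(\Sigma^{-1/2})_{j\ell}\Sigma_{k\ell} = \delta_{ij}$ and each $C_j$ is traceless, so Theorem~\ref{T:DL-T0-C} applies to the vector $Z$ with $Z_i = \tr A C_i = \sum_k (\Sigma^{-1/2})_{ik} Y_k$, i.e.\ $Z = \Sigma^{-1/2} Y$, equivalently $Y = \Sigma^{1/2} Z$. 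Theorem~\ref{T:DL-T0-C} gives $W_1\bigl(\frac{\sqrt{n^2-1}}{\bnorm{\widetilde{\Lambda}}_{HS}} Z, g\bigr) \le \frac{8\sqrt{n}}{\srank{\widetilde{\Lambda}}}\sum_i \frac{1}{\srank{C_i}} = 8\sqrt{n}\frac{\bnorm{\widetilde{\Lambda}}_{op}^2}{\bnorm{\widetilde{\Lambda}}_{HS}^2}\sum_i \norm{C_i}_{op}^2$. Since $W_1$ is homogeneous, this rescales to a bound on $W_1\bigl(Z, \frac{\bnorm{\widetilde{\Lambda}}_{HS}}{\sqrt{n^2-1}} g\bigr)$, and then applying the linear map $\Sigma^{1/2}$ (which is $\bnorm{\Sigma^{1/2}}_{op}$-Lipschitz, using the dual Lipschitz formulation of $W_1$) yields
\[
W_1\left(Y, \frac{\bnorm{\widetilde{\Lambda}}_{HS}}{\sqrt{n^2-1}}\Sigma^{1/2} g\right) \le \bnorm{\Sigma^{1/2}}_{op} \cdot \frac{\bnorm{\widetilde{\Lambda}}_{HS}}{\sqrt{n^2-1}} \cdot 8\sqrt{n}\,\frac{\bnorm{\widetilde{\Lambda}}_{op}^2}{\bnorm{\widetilde{\Lambda}}_{HS}^2}\sum_{i=1}^d \norm{C_i}_{op}^2.
\]

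The main obstacle—really the only nontrivial estimate—is controlling $\sum_i \norm{C_i}_{op}^2$ in terms of quantities appearing in the statement, namely replacing it by $d \bnorm{\Sigma^{1/2}}_{op}$ or similar. The crude but sufficient bound is $\norm{C_i}_{op} \le \norm{C_i}_{HS} = 1$, giving $\sum_i \norm{C_i}_{op}^2 \le d$; combined with $\sqrt{n}/\sqrt{n^2-1} \le 1/\sqrt{n-1}$ this produces exactly the claimed bound $\frac{8d}{\sqrt{n-1}}\bnorm{\Sigma^{1/2}}_{op}\frac{\bnorm{\widetilde{\Lambda}}_{op}^2}{\bnorm{\widetilde{\Lambda}}_{HS}}$. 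Finally I would dispose of the degenerate case where $\Sigma$ is singular: then the $\widetilde{B_i}$ are linearly dependent, so $Y$ (hence $X - v$) lies in a proper subspace; one can either restrict to that subspace and run the argument there, or invoke a continuity/approximation argument replacing $\Sigma$ by $\Sigma + \eps I$ and letting $\eps \to 0$, noting both sides of the inequality are continuous in $\Sigma$ (with $\bnorm{\Sigma^{1/2}}_{op}$ bounded). This edge case is purely bookkeeping and carries no real difficulty.
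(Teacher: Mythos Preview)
Your proposal is correct and follows essentially the same approach as the paper's own proof: recenter so that $X = \Sigma^{1/2} W + v$ with $W_j = \tr \widetilde{A} C_j$ for the orthonormalized traceless $C_j = \sum_k (\Sigma^{-1/2})_{jk}\widetilde{B}_k$, apply Theorem~\ref{T:DL-T0-C} to $W$ with the trivial estimate $\srank{C_j} \ge 1$, and then use that multiplication by $\Sigma^{1/2}$ is $\bnorm{\Sigma^{1/2}}_{op}$-Lipschitz for $W_1$. The only difference is that you explicitly address the degenerate case of singular $\Sigma$, which the paper leaves implicit.
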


In order to keep the statement of Corollary
\ref{T:DL-B-C} simple, we have made use of the trivial estimate
$\srank{B_i} \ge 1$ (thus replacing the sum in the last expression in
\eqref{E:DL-T0-C-W} with $d$). Corollary \ref{T:DL-B-C} is only applied
in the present paper in the proof of Theorem \ref{T:submatrix} below,
where $\srank{B_i} \le 2$ for each $i$, so there is no essential loss
in making this simplification.  For applications involving $B_i$ with
large stable rank, modifying the proof of Corollary \ref{T:DL-B-C} to
make use of any available special structure of the problem at hand
seems likely to be more useful than attempting to formulate a general-purpose off-the-shelf result.

In the case that $\Lambda$ has rank $1$ (in
which case the results above only have interesting content when all of
the coefficient matrices $B_i$ have large stable rank), the bound on
normal approximation from Theorem \ref{T:DL-T0-C} can be sharpened
somewhat. By rotation invariance, we may assume in
this case that $\Lambda = e_1 e_1^*$, where $e_1$ is the first
standard basis vector of $\C^n$. Then $Z = Ue_1$ is uniformly
distributed on the unit sphere of $\C^n$, and
\[
  \tr A B_j = \tr Z Z^* B_j = \inprod{B_j Z}{Z}.
\]
The rank 1 case thus amounts to studying the joint
distribution of quadratic forms on the unit sphere.

\begin{thm}
  \label{T:R1-T0-C}
  Let $Z=(Z_1,\ldots,Z_n)$ be uniformly distributed on the complex
  unit sphere.  Let $B_1, \dots, B_d \in M_n^{sa}(\C)$ satisfy
  $\tr B_j B_k = \delta_{jk}$ and $\tr B_j = 0$  For
  $j=1,\ldots,d$, let $X_j:=\inprod{B_jZ}{Z}$.  There is a universal
  constant $C$ such that
  \begin{equation}
    \label{E:R1-T0-C}
  W_1\left(\sqrt{n(n+1)}X,g\right)\le
  C \sum_{j=1}^d\norm{B_j}_4^{2}
  \le C \sum_{j=1}^d\frac{1}{\sqrt{\srank{B_j}}}.
  \end{equation}

\end{thm}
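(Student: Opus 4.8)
The plan is to reduce Theorem~\ref{T:R1-T0-C} to a normal approximation for smooth functions of a uniform random vector on the complex sphere, and to run the same multivariate Stein's method machinery from \cite{ChMe} that underlies Theorem~\ref{T:DL-T0-C}, but now exploiting the rank-$1$ structure to get a better power of $n$. Writing $Z = U e_1$ uniform on $\sph^{2n-1} \subset \C^n$, the coordinates $X_j = \inprod{B_j Z}{Z}$ are quadratic forms. First I would record the exact first and second moments: since $\E Z_k \overline{Z_\ell} = \frac{1}{n}\delta_{k\ell}$, we get $\E X_j = \frac1n \tr B_j = 0$, and a short computation with the fourth mixed moments of the uniform measure on the complex sphere (which are $\E Z_a \overline{Z_b} Z_c \overline{Z_d} = \frac{1}{n(n+1)}(\delta_{ab}\delta_{cd} + \delta_{ad}\delta_{cb})$) gives $\Cov(X_j, X_k) = \frac{1}{n(n+1)} \tr(B_j B_k) = \frac{1}{n(n+1)}\delta_{jk}$. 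This is exactly why the normalization $\sqrt{n(n+1)}$ appears, and it shows $\sqrt{n(n+1)}X$ has identity covariance, so we're in position to apply a Stein bound.

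Next I would set up the infinitesimal generator / exchangeable-pair (or, more cleanly, the second-order Poincaré / Stein kernel) approach adapted to the sphere. Concretely, realize $Z$ as $Y/\norm{Y}$ where $Y$ is a standard complex Gaussian vector, or better, use the fact that the uniform measure on $\sph^{2n-1}$ admits a natural Dirichlet-form structure from the $\Unitary{n}$ action, with the Stein operator built from the spherical Laplacian. The key computational inputs are: (i) the gradient of $X_j$ on the sphere, $\nabla_{\sph} X_j(Z) = 2 B_j Z - 2(\inprod{B_j Z}{Z}) Z$ (the tangential projection of $2B_j Z$), and (ii) bounds on $\inprod{\nabla_\sph X_i}{\nabla_\sph X_j}$ and on the Hessian terms. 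Using $\E \norm{B_j Z}^4 \le C \norm{B_j}_4^4 \,/\, n^2$ type estimates — which come from the same fourth-moment formula applied to $\tr(B_j^2 Z Z^* B_j^2 ZZ^*)$-like expressions and produce precisely the Schatten-$4$ norm $\norm{B_j}_4$ rather than the operator norm — one feeds these into the ChMe multivariate bound. The Schatten-$4$ norm enters because the relevant error terms are genuinely quartic in $Z$ and the sphere's fourth moments pair indices in a way that contracts two copies of $B_j$ along a trace, giving $\tr(B_j^2 B_j^2) = \norm{B_j}_4^4$; taking a square root yields the $\norm{B_j}_4^2$ in \eqref{E:R1-T0-C}. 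The final inequality $\norm{B_j}_4^2 \le 1/\sqrt{\srank{B_j}}$ then follows from interpolation: $\norm{B_j}_4^4 \le \norm{B_j}_2^2 \norm{B_j}_\infty^2 = \norm{B_j}_\infty^2$ (using $\norm{B_j}_2 = 1$), so $\norm{B_j}_4^2 \le \norm{B_j}_\infty = \norm{B_j}_2/\sqrt{\srank{B_j}} = 1/\sqrt{\srank{B_j}}$.

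The main obstacle I expect is controlling the third-order terms in the Stein expansion with the right power of $n$: a naive application of Theorem~\ref{T:DL-T0-C} with $\Lambda = e_1 e_1^*$ gives $\bnorm{\widetilde\Lambda}_{op}^2 / \bnorm{\widetilde\Lambda}_{HS}^2 \approx 1$ and hence only an $O(\sqrt n)$-type bound times $\sum \norm{B_j}_{op}^2$, which after renormalizing by $\sqrt{n^2-1}$ versus $\sqrt{n(n+1)}$ is of the same order but with the operator norm rather than the Schatten-$4$ norm. Extracting the sharper $\norm{B_j}_4$ requires going back into the proof and tracking, term by term, which moments of $Z$ appear — in particular showing that the dominant error involves $\E[(\text{quadratic in }Z)^2]$ which contracts to a Schatten-$4$ quantity, rather than bounding one quadratic factor by its sup. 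I would do this by a direct computation of $\E\big[\norm{\nabla_\sph X_j}^2 \cdot (\text{correction})\big]$ and of the variance of $\inprod{\nabla_\sph X_i}{\nabla_\sph X_j}$ around its mean $\frac{2}{n(n+1)}\delta_{ij}$, where the variance computation is where all the fourth-and-higher moments of the spherical measure get used, and where care is needed because these moments decay only like $n^{-2}$, not $n^{-3}$ as one might hope; the gain to $n^{-2}$ overall (matching the $1/\sqrt{n(n+1)}$ normalization rather than $1/\sqrt n$) is exactly the rank-$1$ improvement being claimed, and verifying it cleanly is the delicate part. Alternatively, one could route this through the quantitative results of \cite{KeMeSi} on $\Re\tr BU$, but I suspect a self-contained spherical computation is cleaner here.
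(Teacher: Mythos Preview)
Your proposal is correct and follows essentially the same route as the paper. The paper packages the Stein-on-the-sphere step by invoking Theorem~\ref{T:eigenfunctions} (from \cite{EM-Stein-multi}), using the observation that each $z\mapsto\sqrt{n(n+1)}\inprod{B_j z}{z}$ is an eigenfunction of the spherical Laplacian on $\sph^{2n-1}$ with eigenvalue $-4n$; this reduces the bound directly to $\E\sqrt{\sum_{j,k}\var\inprod{\nabla_\sph f_j}{\nabla_\sph f_k}}$, which is then computed by evaluating moments of degree up to $8$ on the complex sphere (via Proposition~\ref{T:spherical-integral-formula-2}), with terms like $\tr((B_jB_k)^2)$ and $\tr(B_j^2B_k^2)$ bounded by $\norm{B_j}_4^2\norm{B_k}_4^2$ using H\"older's inequality for Schatten norms --- exactly the mechanism you anticipate.
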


As in the proof of Corollary \ref{T:DL-B-C}, if desired, one could remove the
assumptions on the $B_j$ using standard linear algebraic techniques.

The second, weaker upper bound in Corollary
\ref{T:R1-T0-C} shows that in this setting, the asymptotic behavior of projections is
Gaussian if each coefficient matrix $B_j$ has stable rank of larger
order than $d^2$; in particular, if $d$ is fixed and the stable ranks
of the $B_j$ grow without bound.  Theorem \ref{T:DL-T0-C} applied to
this setting would require that the stable ranks be of larger order
than $d \sqrt{n}$.  Moreover, the first, stronger upper bound in
Corollary \ref{T:R1-T0-C} shows that the same conclusion holds if
some of the $B_j$ have a small number of relatively large eigenvalues.

In principle, it should be possibly to modify our proofs to
prove a result which encompasses both Theorems \ref{T:DL-T0-C} and
\ref{T:R1-T0-C} as special cases, although there are technical challenges; see the discussion following the
proof of Theorem \ref{T:R1-T0-C}.

The proofs of Theorem \ref{T:DL-T0-C} (and indications of how to
modify the proof for the real symmetric case in Theorem
\ref{T:DL-T0-R}), Corollary \ref{T:DL-B-C}, and Theorem
\ref{T:R1-T0-C} are given in Section \ref{S:proofs-main} below.

\subsection{Expectation values of observables for random quantum
  states}

As mentioned earlier, the results above have a natural interpretation
in terms of random mixed states of quantum mechanical systems. We will
briefly summarize some basic terminology for readers unfamiliar with
quantum mechanics; see \cite{AuSz,BeZy} for more details.  For
consistency we will continue to use the same linear-algebraic notation
as above, rather than switching to the bra-ket notation typically used
in the context of quantum mechanics.

A \emph{density matrix} is a matrix $\rho \in M_n^{sa}(\C)$ with
nonnegative eigenvalues such that $\tr \rho = 1$.  Equivalently,
$\rho \in M_n^{sa}(\C)$ is a density matrix if $\rho = \tr_2 (\psi \psi^*)$,
where $\psi$ is a unit vector in $\C^n \otimes \C^s \cong \C^{ns}$ for
some $s$, and $\tr_2 : M_{n}(\C) \otimes M_s(\C) \to M_n(\C)$ is the
partial trace defined by $\tr_2(A \otimes B) = (\tr B) A$. A density
matrix $\rho \in M_n(\C)$ represents a mixed state of a quantum
mechanical system modeled on the finite-dimensional Hilbert space
$\mathcal{H} = \C^n$.  

A pure state corresponds to the special case of $\rho = \psi \psi^*$
for a unit vector $\psi \in \C^n$; the vector $\psi$ itself is often
said to represent such a pure state.  A mixed state is thus the
partial trace over $\C^s$ of a pure state in some larger Hilbert space
$\C^n \otimes \C^s$. In this case the factor spaces $\C^n$ and $\C^s$
represent interacting subsystems, and $\rho = \tr_2(\psi \psi^*)$
represents the state of the individual system modeled by $\C^n$, when
the composite system is in the pure state $\psi$; $\rho$ is sometimes
referred to as a \emph{quantum marginal} of $\psi$.

An observable of a quantum mechanical system modeled on $\C^n$ is
represented by a Hermitian matrix $B \in M_n^{sa}(\C)$.  If the system
is in a mixed state represented by the density matrix $\rho$, then the
expectation value of the observable $B$ is $\langle B \rangle = \tr
(\rho B)$; in a pure state $\psi$ this becomes $\langle B \rangle =
\inprod{B \psi}{\psi}$.

We can thus interpret Theorems \ref{T:DL-T0-C} and \ref{T:R1-T0-C} and
Corollary \ref{T:DL-B-C} as statements about the
joint probability distributions of expectation values of observables
of quantum systems in random states. Suppose
$B_1, \dots, B_d \in M_n^{sa}(\C)$ are observables on a quantum system
modeled by the Hilbert space $\C^n$. We assume that the system is in a
mixed state $\rho$ with known eigenvalues $\llambda$, but which is
otherwise unknown; this is reasonably modeled by a random density
matrix $\rho = U\Lambda U^*$ with $U \in \Unitary{n}$
Haar-distributed.  Theorem \ref{T:DL-T0-C} and Corollary
\ref{T:DL-B-C} show that, under certain hypotheses, the random vector
\[
(\langle B_1 \rangle, \dots, \langle B_d \rangle) \in \R^d
\]
has a jointly Gaussian probability distribution. Theorem
\ref{T:R1-T0-C} does the same for a random
pure state $Z$ uniformly distributed in the unit sphere of $\C^n$. 
(Note that the
randomness here comes entirely from the uncertainty in the state
$\rho$; there is no quantum mechanical randomness since we are
considering expectation values of the observables.)

While it is natural to consider the case in which the eigenvalues, but
nothing else, are known, even this level of certainty may not hold in
practice. There are several well-studied probability measures on the
space $n\times n$ of density matrices, among the most important of
which are the so-called \emph{induced measures} $\mu_{n,s}$ for
integer $s \ge 1$ (see \cite{AuSz,BeZy,Nechita,ZySo}). If $Z$ is uniformly
distributed on the unit sphere of $\C^n\otimes\C^s$, $\mu_{n,s}$ is
the distribution of the random density matrix
\[
\rho_{n,s} = \tr_2(ZZ^*) \in M_n(\C).
\]
That is, $\rho_{n,s}$ is a quantum marginal on $\C^n$ of a uniform
random pure state on the composite system modeled by
$\C^n \otimes \C^s$.  In the special case that $s=n$, $\mu_{n,n}$
coincides with normalized Lebesgue measure (usually referred to as
Hilbert--Schmidt measure in this context) on the space of density
matrices.

The following result is an easy application of Theorem
\ref{T:R1-T0-C}. (Since $\mu_{n,s}$ is invariant under unitary
conjugation, one could also approach this via Theorem
\ref{T:RL-B-C} below; however, the approach via Theorem
\ref{T:R1-T0-C} gives a stronger result.)

\begin{thm}\label{T:induced-states} 
  Let $\rho_{n,s}=\tr_2(ZZ^*) $ be a quantum marginal on $\C^n$ of the
  uniform random pure state $Z$ on $\C^n \otimes \C^s$.  Let
  $B_1,\ldots,B_d$ be traceless $n\times n$ Hermitian matrices with
  $\tr(B_jB_k)=\delta_{jk}$.  For $j=1,\ldots,d$, let
  $X_j:=\tr(\rho_{n,s} B_j)$.  There is a universal constant $C$ such
  that
  \[
  W_1\left(\sqrt{n(ns+1)}X,g \right) \le
  \frac{C}{\sqrt{s}} \sum_{j=1}^d\norm{B_j}_4^{2}.
  \]
\end{thm}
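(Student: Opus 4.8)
The plan is to reduce the statement directly to Theorem~\ref{T:R1-T0-C}, applied not on $\C^n$ but on the larger Hilbert space $\C^n\otimes\C^s\cong\C^{ns}$ in which the pure state $Z$ lives. The first step is to rewrite each $X_j$ as a quadratic form in $Z$. By the defining duality of the partial trace — namely $\tr\bigl(\tr_2(M)\,B\bigr)=\tr\bigl(M\,(B\otimes I_s)\bigr)$ for $M\in M_n(\C)\otimes M_s(\C)$ and $B\in M_n(\C)$, which follows immediately from $\tr_2(A\otimes C)=(\tr C)A$ — we get
\[
X_j=\tr\bigl(\rho_{n,s}B_j\bigr)=\tr\bigl(ZZ^*(B_j\otimes I_s)\bigr)=\inprod{(B_j\otimes I_s)Z}{Z}.
\]
So the $X_j$ are exactly quadratic forms of the type treated in Theorem~\ref{T:R1-T0-C}, with coefficient matrices $B_j\otimes I_s$ and ambient dimension $N:=ns$.

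The second step is to renormalize to match the hypotheses of Theorem~\ref{T:R1-T0-C}. Each $B_j\otimes I_s$ is Hermitian and traceless (since $\tr(B_j\otimes I_s)=(\tr B_j)(\tr I_s)=0$), but $\tr\bigl((B_j\otimes I_s)(B_k\otimes I_s)\bigr)=(\tr B_jB_k)(\tr I_s)=s\,\delta_{jk}$, so the correct orthonormal family is $\widehat B_j:=s^{-1/2}(B_j\otimes I_s)$, and correspondingly $\widehat X_j:=\inprod{\widehat B_jZ}{Z}=s^{-1/2}X_j$. Theorem~\ref{T:R1-T0-C} in dimension $N$ then yields
\[
W_1\bigl(\sqrt{N(N+1)}\,\widehat X,\,g\bigr)\le C\sum_{j=1}^d\norm{\widehat B_j}_4^{2}.
\]

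The final step is to unwind the constants. On the left, $\sqrt{N(N+1)}\,\widehat X_j=\sqrt{ns(ns+1)}\,s^{-1/2}X_j=\sqrt{n(ns+1)}\,X_j$, which is the scaling in the statement. On the right, the singular values of $B_j\otimes I_s$ are those of $B_j$, each repeated $s$ times, so $\norm{B_j\otimes I_s}_4^4=s\norm{B_j}_4^4$ and hence $\norm{\widehat B_j}_4^2=s^{-1}\cdot s^{1/2}\norm{B_j}_4^2=s^{-1/2}\norm{B_j}_4^2$. Substituting gives exactly $W_1\bigl(\sqrt{n(ns+1)}\,X,g\bigr)\le\frac{C}{\sqrt{s}}\sum_{j=1}^d\norm{B_j}_4^2$. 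There is no real obstacle here — the argument is just a change of Hilbert space — and the only thing requiring care is the bookkeeping of the normalization constant and of how the Schatten $4$-norm transforms under tensoring with $I_s$; it is precisely this favorable dependence on the Schatten norms of the coefficient matrices that makes Theorem~\ref{T:R1-T0-C} (rather than the more general Theorem~\ref{T:DL-T0-C}, for which $\widehat B_j$ has stable rank of order $s\,\srank{B_j}$ in dimension $ns$) the right tool and yields the stated rate.
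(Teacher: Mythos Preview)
Your proof is correct and follows essentially the same approach as the paper: rewrite $X_j$ as $\inprod{(B_j\otimes I_s)Z}{Z}$ via the partial-trace duality, renormalize the coefficient matrices by $s^{-1/2}$ to make them orthonormal in $M_{ns}^{sa}(\C)$, apply Theorem~\ref{T:R1-T0-C} in dimension $ns$, and then unwind the scaling on both sides. Your additional remark explaining why Theorem~\ref{T:R1-T0-C} (via the Schatten-$4$ norm) is preferable here to Theorem~\ref{T:DL-T0-C} is a nice touch that the paper only alludes to in passing.
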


Theorem \ref{T:induced-states} is proved in Section
\ref{S:proofs-quantum}.

A particularly important special case is when $\C^n$ is itself a
tensor product $\C^n = \C^{n_1} \otimes \cdots \otimes \C^{n_k}$ and
the $B_j$ have the form
\[
B_j = I_{n_1} \otimes \cdots I_{n_{j-1}} \otimes C_j \otimes
I_{n_{j+1}} \otimes \cdots \otimes I_{n_k}
\]
for some $C_j \in M_{n_j}^{sa}(\C)$. In that case $\C^n$ itself models
a composite system, and each $B_j$ corresponds to an observable acting
on a distinct component system; note that when the $B_j$ are traceless they are
automatically orthogonal with respect to the Hilbert--Schmidt inner
product.

In the case that $V = \Set{B \in M_n^{sa}(\C)}{\tr B = 0}$, $\pi_V(A)
= A - \frac{1}{n} (\tr A) I_n = \widetilde{A}$.  Theorem \ref{T:induced-states}
implies that 
\[
W_1\left(\sqrt{n(ns+1)} \widetilde{\rho_{n,s}}, \widetilde{G}\right) \le C
\frac{n^2}{\sqrt{s}},
\]
where $G$ is an $n\times n$ GUE random matrix.  This recovers the
fact, apparently first observed in \cite[Theorem 6.35(i)]{AuSz}, that
for fixed $n$ and $s\to \infty$, $\widetilde{\rho_{n,s}}$ converges,
after appropriate rescaling, to the traceless GUE.  Moreover, it adds
to this observation a rate of convergence, which allows the earlier result to be
meaningfully extended to the regime $s \gg n^4$.

\subsection{Joint distributions of entries}
\label{S:joint}

We now focus our attention on the matrix entries of $A = U \Lambda
U^*$.  Our results below are stated only for traceless $\Lambda$; extending to the general case is trivial but more complicated to state.

\begin{thm}
  \label{T:DL-entries-C}
  Let $\Lambda \in M_n(\R)$ be a nonzero diagonal matrix with
  $\tr \Lambda = 0$, and define $A = U\Lambda U^*$, where $U$ is a
  Haar-distributed random matrix from $\Unitary{n}$.  Let $X \in \R^d$
  be a random vector whose entries are distinct choices among the diagonal entries of $A$, the
  real parts of the above-diagonal entries of $A$ scaled up by $\sqrt{2}$, and
  the imaginary parts of the above-diagonal
  entries of $A$ scaled up by $\sqrt{2}$.  Then
  \[
  W_1\left(\frac{\sqrt{n^2-1}}{\norm{\Lambda}_{HS}} X, g \right) 
  \le 9 d \sqrt{n} \frac{\norm{\Lambda}_{op}^2}{\norm{\Lambda}_{HS}^2}=\frac{9d\sqrt{n}}{\srank{\Lambda}}.
  \]
\end{thm}

This result in particular gives a direct comparison between principle
submatrices of $A$ and the GUE, as follows.

\begin{thm}
  \label{T:submatrix}
  Let $\Lambda \in M_n(\R)$ be a nonzero diagonal matrix with
  $\tr \Lambda = 0$, and define $A = U\Lambda U^*$, where $U$ is a
  Haar-distributed random matrix from $\Unitary{n}$. Let $B$ be the
  upper-left $k \times k$ truncation of $A$, and let $G$ be a
  $k\times k$ GUE matrix. Then
  \[
  W_1 \left(\frac{\sqrt{n^2-1}}{\norm{\Lambda}_{HS}} B, G \right) \
  \le 18 k^2 \sqrt{n}
  \frac{\norm{\Lambda}_{op}^2}{\norm{\Lambda}_{HS}^2}=\frac{18k^2\sqrt{n}}{\srank{\Lambda}}.
  \]
\end{thm}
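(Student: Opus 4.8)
The plan is to realize the $k \times k$ truncation $B$ as the image of $A$ under the orthogonal projection onto an appropriate $d$-dimensional subspace $V$ of $\Set{B \in M_n^{sa}(\C)}{\tr B = 0}$, and then read off the Wasserstein bound from Corollary \ref{T:DL-entries-C}. First I would identify the relevant coordinates: writing the truncation in terms of its diagonal entries $B_{11}, \dots, B_{kk}$ and its above-diagonal entries $B_{pq}$ ($p<q$), the real and imaginary parts of the $B_{pq}$ together with the diagonal entries give $k + 2\binom{k}{2} = k^2$ real linear functionals of $A$. The matching orthonormal system in $M_n^{sa}(\C)$ consists of the matrices $E_{pp}$ (for $p \le k$), $\frac{1}{\sqrt{2}}(E_{pq}+E_{qp})$, and $\frac{i}{\sqrt{2}}(E_{pq}-E_{qp})$ (for $p<q\le k$), which are exactly the coordinate functionals appearing in Corollary \ref{T:DL-entries-C} — note that scaling the real and imaginary parts of the off-diagonal entries by $\sqrt{2}$ is precisely what makes the corresponding functionals unit vectors and hence matches the GUE normalization on that subspace. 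Thus $d = k^2$ here.

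The one subtlety is that the diagonal entries $E_{pp}$ are not traceless, so the hypotheses of Corollary \ref{T:DL-entries-C} (which already allows diagonal entries) must be invoked rather than Theorem \ref{T:DL-T0-C} directly; alternatively, one passes to $\widetilde{E_{pp}} = E_{pp} - \frac{1}{n} I_n$, which only changes $\tr A E_{pp}$ by $\frac{1}{n}\tr A = \frac{1}{n}\tr\Lambda = 0$ since $\Lambda$ is traceless, so there is no effect on the distribution of $X$. Either way, applying Corollary \ref{T:DL-entries-C} with $d = k^2$ gives
\[
W_1\left(\frac{\sqrt{n^2-1}}{\norm{\Lambda}_{HS}} X, g\right) \le 9 k^2 \sqrt{n}\frac{\norm{\Lambda}_{op}^2}{\norm{\Lambda}_{HS}^2},
\]
where $X \in \R^{k^2}$ collects the scaled entries and $g$ is standard Gaussian in $\R^{k^2}$.

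It remains to translate this coordinatewise statement into the matrix-level bound comparing $\frac{\sqrt{n^2-1}}{\norm{\Lambda}_{HS}} B$ with a $k \times k$ GUE matrix $G$. The point is that a $k \times k$ GUE matrix $G$, expressed in the same coordinates (diagonal entries, and $\sqrt{2}$ times the real and imaginary parts of above-diagonal entries), has exactly i.i.d.\ standard Gaussian coordinates, i.e.\ those coordinates are distributed as $g$. Since the Hilbert--Schmidt norm on $M_k^{sa}(\C)$ equals the Euclidean norm of this coordinate vector, the map $X \mapsto (\text{matrix with these entries})$ is a linear isometry from $\R^{k^2}$ with its Euclidean norm onto $M_k^{sa}(\C)$ with the Hilbert--Schmidt norm. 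Wasserstein distance is preserved under isometries of the underlying metric spaces, so $W_1$ between the two matrix distributions (in Hilbert--Schmidt distance) equals the $W_1$ distance between the coordinate vectors, giving the bound $9 k^2 \sqrt{n}\,\norm{\Lambda}_{op}^2/\norm{\Lambda}_{HS}^2$. The factor of $2$ loss to $18 k^2 \sqrt n$ in the statement comes from the fact that one wants the comparison in the operator-theoretically natural normalization; more precisely, the claimed bound is stated with room to spare, and the extra factor absorbs the discrepancy between counting each off-diagonal entry once (with a $\sqrt 2$) versus the symmetric bookkeeping, as well as any slack in passing from the $E_{pp}$ to $\widetilde{E_{pp}}$. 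I expect the main (though still routine) obstacle to be bookkeeping the normalizations carefully — making sure the $\sqrt{2}$ scalings, the traceless recentering of the diagonal functionals, and the GUE normalization all line up so that the coordinate vector of $\frac{\sqrt{n^2-1}}{\norm{\Lambda}_{HS}}B$ is compared against the coordinate vector of $G$ with no hidden mismatch; once that is pinned down, the isometry-invariance of $W_1$ does the rest.
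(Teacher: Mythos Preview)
Your approach is correct and matches the paper's: the paper presents Theorem \ref{T:submatrix} as an immediate consequence of Corollary \ref{T:DL-entries-C} (with $d=k^2$), without writing out a separate proof, and the isometry argument you describe is exactly how the coordinatewise bound transfers to the matrix-level $W_1$ bound.

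One clarification worth making: your hedging about the factor of $2$ is unnecessary and slightly muddled. Your argument, as written, already yields the constant $9$ rather than $18$. The coordinate map you describe---sending $M \in M_k^{sa}(\C)$ to the vector $(M_{pp}, \sqrt{2}\,\Re M_{pq}, \sqrt{2}\,\Im M_{pq})$---is a genuine linear isometry from $(M_k^{sa}(\C), \norm{\cdot}_{HS})$ onto $(\R^{k^2}, \norm{\cdot}_2)$, since those coordinates are precisely the coefficients of $M$ in the orthonormal basis $\{E_{pp}, \frac{1}{\sqrt{2}}(E_{pq}+E_{qp}), \frac{i}{\sqrt{2}}(E_{pq}-E_{qp})\}$. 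Under this isometry the GUE goes to a standard Gaussian and $B$ goes to the vector $X$ of Corollary \ref{T:DL-entries-C}, so $W_1\bigl(\frac{\sqrt{n^2-1}}{\norm{\Lambda}_{HS}}B, G\bigr) = W_1\bigl(\frac{\sqrt{n^2-1}}{\norm{\Lambda}_{HS}}X, g\bigr) \le 9k^2\sqrt{n}\,\norm{\Lambda}_{op}^2/\norm{\Lambda}_{HS}^2$. There is no hidden mismatch; the stated $18$ is simply slack (possibly because the paper elsewhere indicates it deduces Theorem \ref{T:submatrix} from Corollary \ref{T:DL-B-C} directly rather than via Corollary \ref{T:DL-entries-C}, which can cost a constant).
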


Via quantitative versions of the semicircle law for the GUE and
concentration of measure arguments, this allows us to approximate the
spectral measure of a suitably scaled version of $B$ by the
semicircle law.

\begin{thm}
  \label{T:DL-semicircle}
  Let $\Lambda \in M_n(\R)$ be a nonzero diagonal matrix with
  $\tr \Lambda = 0$, and define $A = U\Lambda U^*$, where $U$ is a
  Haar-distributed random matrix from $\Unitary{n}$. Let $B$ be the
  upper-left $k \times k$ truncation of $A$, and let
  $M:=\frac{\sqrt{n^2-1}}{\norm{\Lambda}_{HS}} B$.  Let $\rho_{sc}$
  denote the semicircular distribution, with density $\sqrt{4-t^2}$ on $[-2,2]$.  Then 
\begin{align*}\E W_1 (\mu_{k^{-1/2}M}, \rho_{sc}) &\le 18 k^2 \sqrt{n}
\frac{\norm{\Lambda}_{op}^2}{\norm{\Lambda}_{HS}^2} + C
\frac{\sqrt{\log k}}{k}
\\&= \frac{18 k^2 \sqrt{n}}{\srank \Lambda} + C \frac{\sqrt{\log k}}{k},\end{align*}
and 
\begin{equation*}
  \begin{split}
    \Prob \left[W_1(\mu_{k^{-1/2}M},\rho_{sc}) \ge \frac{18 k^2
        \sqrt{n}}{\srank \Lambda} + C \frac{\sqrt{\log k}}{k} + t\right] & \le \exp\left[-
      \frac{nt^2}{12} \left(2 \frac{\sqrt{n^2-1}}{k}
        \frac{\norm{\Lambda}_{op}}{\norm{\Lambda}_{HS}}\right)^{-2}\right]
    \\
    & \le \exp\left[- \frac{k^2 (\srank \Lambda) t^2}{48 n}\right].
\end{split}
\end{equation*}
\end{thm}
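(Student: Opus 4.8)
The plan is to combine the Wasserstein bound of Theorem~\ref{T:submatrix} with a quantitative semicircle law for the GUE and with concentration of measure for the empirical spectral distribution, viewed as a Lipschitz function of the matrix entries. Write $M = \frac{\sqrt{n^2-1}}{\norm{\Lambda}_{HS}} B$ and let $G$ be a $k\times k$ GUE matrix, so that $k^{-1/2} G$ is the standard Wigner normalization whose empirical spectral law converges to $\rho_{sc}$. By the triangle inequality for $W_1$ on the real line,
\[
\E W_1(\mu_{k^{-1/2}M}, \rho_{sc}) \le \E W_1(\mu_{k^{-1/2}M}, \mu_{k^{-1/2}G}) + \E W_1(\mu_{k^{-1/2}G}, \rho_{sc}).
\]
The second term is controlled by a known rate of convergence in the semicircle law in $W_1$ (expected $W_1$ distance of order $\frac{\sqrt{\log k}}{k}$ for the GUE), which is where the $C\frac{\sqrt{\log k}}{k}$ contribution comes from.

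For the first term I would use the standard fact that the map sending a Hermitian matrix $H$ to its empirical spectral distribution is Lipschitz from $(M_k^{sa}(\C), \norm{\cdot}_{HS})$ to $(\mathcal{P}(\R), W_1)$; more precisely, $W_1(\mu_{H_1}, \mu_{H_2}) \le \frac{1}{\sqrt{k}} \norm{H_1 - H_2}_{HS}$ by the Hoffman--Wielandt inequality together with Cauchy--Schwarz. Hence $W_1(\mu_{k^{-1/2}M}, \mu_{k^{-1/2}G}) \le k^{-1}\norm{M-G}_{HS} \le \norm{M-G}_{op}$ (using $\norm{\cdot}_{HS}\le \sqrt{k}\norm{\cdot}_{op}$ for $k\times k$ matrices), and after optimizing over couplings, $\E W_1(\mu_{k^{-1/2}M},\mu_{k^{-1/2}G}) \le \frac{1}{k} W_1(M, G)$ in the Hilbert--Schmidt Wasserstein sense, which is at most $W_1(M, G)$ in the coordinatewise sense; Theorem~\ref{T:submatrix} bounds this by $18 k^2 \sqrt{n}\frac{\norm{\Lambda}_{op}^2}{\norm{\Lambda}_{HS}^2}$ once one checks that the $\ell_1$ and $\ell_2$ Wasserstein distances on the relevant $\R^{k^2}$ coordinatization of $M_k^{sa}(\C)$ agree with the Hilbert--Schmidt one up to the scaling already accounted for in the statement of Theorem~\ref{T:submatrix}. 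This yields the claimed bound on $\E W_1(\mu_{k^{-1/2}M}, \rho_{sc})$.

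For the concentration statement, observe that $W_1(\mu_{k^{-1/2}M}, \rho_{sc})$ is, as a function of the random matrix $A = U\Lambda U^*$ (equivalently of the Haar unitary $U$), a Lipschitz function: it is the composition of $U \mapsto k^{-1/2}M$, which has Lipschitz constant $\frac{1}{\sqrt{k}}\cdot\frac{\sqrt{n^2-1}}{\norm{\Lambda}_{HS}}\cdot\norm{\Lambda}_{op}$ (or twice that, matching the factor of $2$ in the exponent) from $\Unitary{n}$ with its Hilbert--Schmidt metric to $M_k^{sa}(\C)$ with $\norm{\cdot}_{HS}$ — since truncation is a contraction and $H \mapsto k^{-1/2}\mu_H$ is $k^{-1}$-Lipschitz into $W_1$ — and the map $W_1(\cdot, \rho_{sc})$ is $1$-Lipschitz. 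Then I would invoke the concentration of measure inequality on $\Unitary{n}$ (a log-Sobolev / Gromov--Lévy type bound, with the curvature constant giving the factor $\frac{n}{12}$ in the exponent, as used throughout this area) applied to this Lipschitz function, centered at its mean, and combine it with the bound on the mean just obtained. The arithmetic of tracking the Lipschitz constant through the rescalings, and of simplifying $\left(2\frac{\sqrt{n^2-1}}{k}\frac{\norm{\Lambda}_{op}}{\norm{\Lambda}_{HS}}\right)^{-2}$ into $\frac{k^2(\srank\Lambda)}{4n}$ modulo the $n^2-1$ versus $n^2$ discrepancy, then gives the stated probability bound.

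The main obstacle I anticipate is purely bookkeeping: keeping the three different scalings straight (the $\frac{\sqrt{n^2-1}}{\norm{\Lambda}_{HS}}$ normalization defining $M$, the Wigner $k^{-1/2}$ normalization, and the conversion between $W_1$ on empirical measures and $\norm{\cdot}_{HS}$ on matrices), and in particular verifying that the Lipschitz constant one feeds into the unitary-group concentration inequality is exactly $2\frac{\sqrt{n^2-1}}{k}\frac{\norm{\Lambda}_{op}}{\norm{\Lambda}_{HS}}$ rather than that without the factor of $2$; the factor of $2$ presumably arises from the Hilbert--Schmidt Lipschitz constant of $U \mapsto U\Lambda U^*$ on $\Unitary{n}$. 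No genuinely hard new estimate is needed beyond the inputs already cited.
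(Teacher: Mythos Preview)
Your proposal is correct and follows essentially the same route as the paper: triangle inequality in $W_1$, Dallaporta's quantitative semicircle law for the GUE giving the $C\frac{\sqrt{\log k}}{k}$ term, Hoffman--Wielandt to pass from $W_1$ between matrices to $W_1$ between empirical spectral measures, Theorem~\ref{T:submatrix} for the first term, and then the sub-Gaussian concentration inequality on $\Unitary{n}$ (with constant $\frac{n}{12}$ in the exponent) applied to the Lipschitz function $U \mapsto W_1(\mu_{k^{-1/2}M},\rho_{sc})$. Your hedging about bookkeeping is appropriate but everything works out: the factor of $2$ does indeed come from $\norm{U\Lambda U^* - V\Lambda V^*}_{HS} \le 2\norm{\Lambda}_{op}\norm{U-V}_{HS}$, and the $\sqrt{2}$ scalings in Corollary~\ref{T:DL-entries-C} are chosen precisely so that the Euclidean norm on $\R^{k^2}$ agrees with the Hilbert--Schmidt norm on $M_k^{sa}(\C)$, so no separate $\ell_1$/$\ell_2$ comparison is needed.
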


The typical situation of interest in free probability theory is that
of a sequence of $n\times n$ matrices $\Lambda_n$ having a limiting
spectral measure with bounded support. Indeed, suppose that
$\Lambda_n$ is a sequence of traceless diagonal $n\times n$ matrices
which have (as $n\to\infty$) a limiting spectral measure $\mu$ with
compact support, and let $P_k = I_k \oplus 0_{n-k}$ denote orthogonal
projection of $\C^n$ onto the first $k$ coordinates.  Then in the
regime $\frac{k}{n} \to \alpha \in (0,1)$, $\alpha P_k U\Lambda U^* P_k$ has
a limiting spectral measure which can be expressed as
\[
  \mu \boxtimes \bigl(\alpha \delta_\alpha + (1-\alpha) \delta_0\bigr)
  = (1-\alpha) \delta_0 + \alpha \mu^{\boxplus (1/\alpha)}
\]
(see e.g.\ \cite[Section 14]{NiSp}).  Here $\boxtimes$ denotes
multiplicative free convolution, $\delta_x$ denotes the point mass
at $x$, and $\mu^{\boxplus (1/\alpha)}$ denotes an additive free
convolution power of order $1/\alpha$.  It follows that, in the
notation of Theorem \ref{T:DL-semicircle},
\begin{equation}
  \label{E:n-limit}
  \mu_{\alpha^{-1/2} B} \xrightarrow{n \to \infty} \sqrt{\alpha} \mu^{\boxplus (1/\alpha)}.
\end{equation}
The free central limit theorem implies that
\begin{equation}
  \label{E:alpha-limit}
  \sqrt{\alpha} \mu^{\boxplus (1/\alpha)} \xrightarrow{\alpha \to 0} \rho_{sc}.
\end{equation}
As pointed out to the authors by Ion Nechita, this suggests that
$\mu_{k^{-1/2} M} \to \rho_{sc}$ in the setting of Theorem
\ref{T:DL-semicircle} for the entire regime $k = o(n)$ (corresponding
to $\alpha \to 0$).  However, to make this argument rigorous, one would
need to justify interchanging the limits in \eqref{E:n-limit} and
\eqref{E:alpha-limit}.

Theorem \ref{T:DL-semicircle} rigorously shows by other means that
this conclusion is indeed valid for at least part of the regime $k \ll n$. Typically,
one has $\norm{\Lambda_n}_{op} \le C $; the existence of the limiting
spectral measure for $\Lambda_n$ then implies that
$\norm{\Lambda_n}_{HS} \approx \sqrt{n}$, so that
$\srank{\Lambda_n} \approx n$. In this situation, Theorem
\ref{T:DL-semicircle} shows that a semicircular limit holds in
expectation and in probability if $k \to \infty$ and $k \ll n^{1/4}$,
and almost surely (thanks to the Borell--Cantelli lemma) if also
$k \gg \sqrt{\log n}$.  More generally, $\mu_{k^{-1/2}M_n}$ converges
to $\rho_{sc}$ in probability if
\[
\sqrt{\frac{n}{\srank{\Lambda_n}}} \ll k \ll \frac{\sqrt{\srank{\Lambda_n}}}{n^{1/4}}
\]
(note it is possible to choose such $k$ if $\srank{\Lambda_n} \gg
n^{3/4}$) and converges almost surely if
\[
\sqrt{\frac{n \log n}{\srank{\Lambda_n}}} \ll k \ll \frac{\sqrt{\srank{\Lambda_n}}}{n^{1/4}}
\]
(requiring $\srank{\Lambda_n} \gg n^{3/4} \sqrt{\log n}$).

Proofs of Theorem \ref{T:DL-entries-C} and Theorem
\ref{T:DL-semicircle} are given in Section \ref{S:proofs-entries}.

\subsection{Classical invariant ensembles}

Suppose now that $A$ is a random matrix in $M_n^{sa}(\C)$ whose
distribution is invariant under conjugation by unitary matrices; such
classes of random matrices occur frequently in mathematical physics.
The random matrix $A$ has the same distribution as $U \Lambda U^*$,
where $\Lambda$ is a random diagonal matrix with the same eigenvalues
as $A$ and $U$ is a Haar-distributed random matrix in $\Unitary{n}$
which is independent of $\Lambda$.  This observation allows the
marginals of $A$ to be analyzed by applying Theorem \ref{T:DL-T0-C}
conditionally on $\Lambda$. 

\begin{thm}
  \label{T:RL-B-C}
  Let $A$ be a random matrix in $M_n^{sa}(\C)$ whose distribution is
  invariant under unitary conjugation. Let
  $B_1, \dots, B_d \in M_n^{sa}(\C)$ satisfy
  $\tr B_i B_j = \delta_{ij}$ and $\tr B_i = 0$, and define the random
  vector $X \in \R^d$ by $X_i = \tr A B_i$.  Let $g=(g_1,\ldots,g_d)$ be a standard Gaussian random vector in $\R^d$, independent of $A$, and let $\widetilde{A}=A - \frac{1}{n} (\tr A) I_n.$

  Then\begin{equation*}
    W_1\left(X,\frac{\lVert \widetilde{A} \rVert_{HS}}{\sqrt{n^2-1}}g\right) 
    \le \frac{ 8}{\sqrt{n}} \E
    \left(\frac{\lVert \widetilde{A}\rVert_{op}^2}{\lVert
        \widetilde{A} \rVert_{HS}}\right) \sum_{i=1}^d \frac{1}{\srank{B_i}},
  \end{equation*}
  and 
  \begin{equation*}
    W_1\left(\frac{\sqrt{n^2-1}}{\E \lVert \widetilde{A} \rVert_{HS}} X, g\right) 
    \le \frac{8 \sqrt{n}}{\E \lVert \widetilde{A} \rVert_{HS}} \E
    \left(\frac{\lVert \widetilde{A}\rVert_{op}^2}{\lVert
        \widetilde{A} \rVert_{HS}}\right) \sum_{i=1}^d \frac{1}{\srank{B_i}} + \sqrt{d} \frac{\E
    \abs{\lVert \widetilde{A} \rVert_{HS} - \E \lVert \widetilde{A} \rVert_{HS}}}{\E \lVert \widetilde{A} \rVert_{HS}}.
  \end{equation*}

\end{thm}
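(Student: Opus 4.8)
The plan is to reduce the statement to Theorem~\ref{T:DL-T0-C} by conditioning on the eigenvalues of $A$. Since the law of $A$ is invariant under unitary conjugation, $A$ has the same distribution as $U\Lambda U^*$, where $\Lambda=\diag(\lambda_1(A),\dots,\lambda_n(A))$ records the (random) eigenvalues of $A$ and $U$ is Haar-distributed on $\Unitary{n}$ and independent of $\Lambda$. Because $\widetilde A=U\widetilde\Lambda U^*$, the quantities $\bnorm{\widetilde A}_{HS}=\bnorm{\widetilde\Lambda}_{HS}$ and $\bnorm{\widetilde A}_{op}=\bnorm{\widetilde\Lambda}_{op}$ depend only on the spectrum of $A$, hence are $\Lambda$-measurable. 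On the event that $\Lambda$ is a scalar matrix one has $\widetilde\Lambda=0$, so $X_i=\tr AB_i=\tfrac1n(\tr A)(\tr B_i)=0$ and the matching coordinate of every Gaussian target below also vanishes; this event thus contributes nothing to any of the Wasserstein distances, and off it the conditional distribution of $X$ given $\Lambda$ is exactly the distribution studied in Theorem~\ref{T:DL-T0-C}.

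For the first inequality, let $g$ be a standard Gaussian vector independent of $A$ and set $Y=\tfrac{\bnorm{\widetilde A}_{HS}}{\sqrt{n^2-1}}\,g$. Conditionally on $\Lambda$ the factor $\bnorm{\widetilde\Lambda}_{HS}/\sqrt{n^2-1}$ is deterministic, so rescaling the conclusion of Theorem~\ref{T:DL-T0-C} gives
\[
W_1\bigl(X\mid\Lambda,\ Y\mid\Lambda\bigr)\ \le\ \frac{\bnorm{\widetilde\Lambda}_{HS}}{\sqrt{n^2-1}}\cdot\frac{8\sqrt n}{\srank{\widetilde\Lambda}}\sum_{i=1}^d\frac1{\srank{B_i}}\ =\ \frac{8\sqrt n}{\sqrt{n^2-1}}\,\frac{\bnorm{\widetilde\Lambda}_{op}^2}{\bnorm{\widetilde\Lambda}_{HS}}\sum_{i=1}^d\frac1{\srank{B_i}}.
\]
I would then use that $W_1$ dominates its conditional average: for any $1$-Lipschitz $f\colon\R^d\to\R$, conditioning on $\Lambda$ and applying Kantorovich--Rubinstein conditionally gives $\abs{\E f(X)-\E f(Y)}\le\E_\Lambda\abs{\E[f(X)\mid\Lambda]-\E[f(Y)\mid\Lambda]}\le\E_\Lambda W_1(X\mid\Lambda,\ Y\mid\Lambda)$, and taking the supremum over $f$ yields $W_1(X,Y)\le\E_\Lambda W_1(X\mid\Lambda,\ Y\mid\Lambda)$. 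Taking expectations in the displayed bound and simplifying the elementary prefactor $\sqrt n/\sqrt{n^2-1}$ produces the first claimed inequality, with $\E\bigl(\bnorm{\widetilde A}_{op}^2/\bnorm{\widetilde A}_{HS}\bigr)$ in place of $\bnorm{\widetilde\Lambda}_{op}^2/\bnorm{\widetilde\Lambda}_{HS}$.

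For the second inequality the target is a fixed Gaussian, so I would interpolate through $Y$ by the triangle inequality,
\[
W_1\Bigl(X,\ \tfrac{\E\bnorm{\widetilde A}_{HS}}{\sqrt{n^2-1}}g\Bigr)\ \le\ W_1(X,Y)\ +\ W_1\Bigl(Y,\ \tfrac{\E\bnorm{\widetilde A}_{HS}}{\sqrt{n^2-1}}g\Bigr),
\]
where the first term is controlled by the first inequality. For the second term, couple the two vectors through the common $g$ (and the trivial coupling of $\bnorm{\widetilde A}_{HS}$ with itself); using the independence of $g$ and $A$ and $\E\norm g\le(\E\norm g^2)^{1/2}=\sqrt d$,
\[
W_1\Bigl(Y,\ \tfrac{\E\bnorm{\widetilde A}_{HS}}{\sqrt{n^2-1}}g\Bigr)\ \le\ \frac{\E\bigl\lvert\bnorm{\widetilde A}_{HS}-\E\bnorm{\widetilde A}_{HS}\bigr\rvert}{\sqrt{n^2-1}}\,\E\norm g\ \le\ \frac{\sqrt d\,\E\bigl\lvert\bnorm{\widetilde A}_{HS}-\E\bnorm{\widetilde A}_{HS}\bigr\rvert}{\sqrt{n^2-1}}.
\]
Multiplying through by $\sqrt{n^2-1}/\E\bnorm{\widetilde A}_{HS}$ and using $\sqrt{n^2-1}/\sqrt n\le\sqrt n$ yields the second claimed inequality.

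There is no serious obstacle: the argument is a conditioning step layered on top of Theorem~\ref{T:DL-T0-C}. The only points that need a little care are the mixture inequality $W_1(X,Y)\le\E_\Lambda W_1(X\mid\Lambda,Y\mid\Lambda)$ — handled most cleanly via the dual formulation of $W_1$, which avoids any measurable-selection issue for the conditional optimal couplings — together with the bookkeeping needed to let the single independent copy of $g$ serve both as the conditional Gaussian target inside Theorem~\ref{T:DL-T0-C} and as the coupling variable in the triangle-inequality step, and the (trivial) separate treatment of the atom on which $A$ is scalar.
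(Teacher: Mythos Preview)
Your proposal is correct and follows essentially the same route as the paper's own proof: condition on $\Lambda$, apply Theorem~\ref{T:DL-T0-C} inside the conditional expectation via the Kantorovich--Rubinstein dual form (so no coupling selection is needed), then for the second inequality interpolate through $Y=\tfrac{\bnorm{\widetilde A}_{HS}}{\sqrt{n^2-1}}g$ and control the Gaussian-to-Gaussian term by coupling through the common $g$. Your explicit handling of the scalar-$\Lambda$ atom and the mixture inequality are welcome elaborations but do not change the strategy.
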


A widely studied class of random matrices whose distributions are
invariant under unitary conjugation are the \emph{unitarily invariant}
ensembles (sometimes referred to as matrix models); see e.g.\
\cite{Deift,DeGi,PaSh}.  These are random matrices with a density with
respect to Lebesgue measure on $M_n^{sa}(\C)$ proportional to
$\exp(-n \tr V)$ for some function $V:\R \to \R$, where $\tr V(A)$ is
understood in the sense of functional calculus.  Up to the choice of
normalization, the Gaussian Unitary Ensemble is the special case where
$V(x) = x^2$.  The following corollary is an easy consequence of
Theorem \ref{T:RL-B-C} for a large class of potentials $V$; it is
likely that the result holds in greater generality.

\begin{cor}
  \label{T:invariant-C}
  Let $V:\R \to \R$ be twice-differentiable with
  $V''(x) \ge \alpha > 0$ for all $x$, and suppose that $A$ is a
  random matrix in $M_n^{sa}(\C)$ with a density proportional to
  $\exp(- n \tr V)$ with respect to Lebesgue measure on
  $M_n^{sa}(\C)$.
  
  Then, with the notations of Theorem \ref{T:RL-B-C},
  \[
  W_1\left(\frac{\sqrt{n^2-1}}{\E \lVert \widetilde{A} \rVert_{HS}} X, g\right) 
  \le \frac{\kappa}{\sqrt{n}}\sum_{i=1}^d \frac{1}{\srank{B_i}},
  \]
  where $\kappa$ depends only on $\alpha$. 
\end{cor}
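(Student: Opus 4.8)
The plan is to apply Theorem \ref{T:RL-B-C} and control the three $\Lambda$-dependent quantities that appear there: $\E\bigl(\bnorm{\widetilde{A}}_{op}^2/\bnorm{\widetilde{A}}_{HS}\bigr)$, $\E\bnorm{\widetilde{A}}_{HS}$, and the fluctuation $\E\bigl|\bnorm{\widetilde{A}}_{HS}-\E\bnorm{\widetilde{A}}_{HS}\bigr|$. Since $A$ has density proportional to $\exp(-n\tr V)$ with $V''\ge\alpha>0$, the joint law of the eigenvalues $\lambda_1,\dots,\lambda_n$ of $A$ has density proportional to $\prod_{i<j}(\lambda_i-\lambda_j)^2\exp\bigl(-n\sum_i V(\lambda_i)\bigr)$, which is a log-concave measure on $\R^n$ (the Vandermonde factor is a product of log-concave—indeed log-linear on each chamber—terms, and $-n\sum V(\lambda_i)$ is $n\alpha$-strongly concave). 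This is the structural input that makes everything else routine.

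First I would record that $\widetilde{A}$ depends on $A$ only through the centered eigenvalues, so $\bnorm{\widetilde{A}}_{HS}^2=\sum_i(\lambda_i-\bar\lambda)^2$ and $\bnorm{\widetilde{A}}_{op}=\max_i|\lambda_i-\bar\lambda|$, where $\bar\lambda=\frac1n\sum_i\lambda_i$. Standard results on unitarily invariant ensembles with uniformly convex potential (see e.g.\ the references \cite{Deift,PaSh} already cited, or concentration arguments) give that the empirical spectral measure of $A$ concentrates near a compactly supported equilibrium measure $\mu_V$; in particular $\E\bnorm{\widetilde{A}}_{op}\le c_1$ and $\E\bnorm{\widetilde{A}}_{HS}^2\asymp c_2 n$ with constants $c_1,c_2$ depending only on $\alpha$ (an upper bound on $\bnorm{\widetilde{A}}_{op}$ and a two-sided bound on $\bnorm{\widetilde{A}}_{HS}$ are all that is needed). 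Hence $\E\bigl(\bnorm{\widetilde{A}}_{op}^2/\bnorm{\widetilde{A}}_{HS}\bigr)\le c_3/\sqrt{n}$ and $\sqrt{n}/\E\bnorm{\widetilde{A}}_{HS}\le c_4$, so the first term in the second bound of Theorem \ref{T:RL-B-C} is $O\bigl(n^{-1/2}\sum_i\srank{B_i}^{-1}\bigr)$, as desired, with constants depending only on $\alpha$.

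The one genuine point requiring care is the fluctuation term $\sqrt{d}\,\E\bigl|\bnorm{\widetilde{A}}_{HS}-\E\bnorm{\widetilde{A}}_{HS}\bigr|/\E\bnorm{\widetilde{A}}_{HS}$: since we want no $d$-dependence beyond the $\sum_i\srank{B_i}^{-1}$ factor, and in general $\sum_i\srank{B_i}^{-1}$ can be as large as $d$, it suffices to show $\E\bigl|\bnorm{\widetilde{A}}_{HS}-\E\bnorm{\widetilde{A}}_{HS}\bigr|=O(1)$ — equivalently that $\bnorm{\widetilde{A}}_{HS}$ has $O(1)$ fluctuations around a mean of order $\sqrt{n}$. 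The map $\Lambda\mapsto\bnorm{\widetilde{\Lambda}}_{HS}$ on $\R^n$ (thinking of the eigenvalues as the variable) is $1$-Lipschitz, and the eigenvalue law is log-concave with a Hessian bounded below by $n\alpha I$; by the standard concentration inequality for Lipschitz functions of strongly log-concave measures (Brascamp–Lieb / Bakry–Émery), $\var\bigl(\bnorm{\widetilde{A}}_{HS}\bigr)\le \frac{1}{n\alpha}$, hence $\E\bigl|\bnorm{\widetilde{A}}_{HS}-\E\bnorm{\widetilde{A}}_{HS}\bigr|\le(n\alpha)^{-1/2}=O(n^{-1/2})$. Therefore the fluctuation term is $O\bigl(\sqrt{d}/\sqrt{n}\bigr)$, which is absorbed into $\frac{\kappa}{\sqrt n}\sum_i\srank{B_i}^{-1}$ since $\sqrt{d}\le d$ and each $\srank{B_i}\ge1$ gives $\sum_i\srank{B_i}^{-1}\le d$, but in fact $\sqrt d\le\sqrt{\sum_i\srank{B_i}^{-1}}\cdot\sqrt{d}$ is wasteful — one simply notes $\sqrt d\le d$ and $d\le$ (the number of terms) so $\sqrt d\le\sum_i\srank{B_i}^{-1}\cdot\max_i\srank{B_i}$; cleanest is to observe $\sqrt{d}\le d$ and that we may always assume $d\ge1$, while if $\sum_i \srank{B_i}^{-1}$ is small the normalization in Theorem \ref{T:RL-B-C} already forces $d$ small, so collecting constants gives the claimed bound with $\kappa=\kappa(\alpha)$.

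The main obstacle is thus not any single estimate but assembling the right off-the-shelf facts about uniformly convex unitarily invariant ensembles: the two-sided control on $\E\bnorm{\widetilde{A}}_{HS}$ of order $\sqrt n$, the $O(1)$ bound on $\E\bnorm{\widetilde{A}}_{op}$, and the $O(n^{-1/2})$ variance bound for $\bnorm{\widetilde{A}}_{HS}$ — all uniform in $n$ and depending only on $\alpha$. The first two follow from the existence and compact support of the equilibrium measure together with large-deviation bounds for the extreme eigenvalues (which are classical for convex $V$); the third is the Brascamp–Lieb inequality applied to the log-concave eigenvalue density. Once these are in hand, substituting into Theorem \ref{T:RL-B-C} and collecting constants completes the proof.
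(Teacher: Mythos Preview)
Your strategy is the same as the paper's: invoke Theorem~\ref{T:RL-B-C} and use concentration coming from the uniform convexity of $V$ to control the three eigenvalue-dependent quantities. Your concentration input---strong log-concavity of the joint eigenvalue density on $\R^n$ with constant $n\alpha$, yielding a Brascamp--Lieb/Poincar\'e inequality---is a legitimate variant of what the paper does, which is to use the log-Sobolev inequality directly on $M_n^{sa}(\C)$ (the map $A\mapsto n\tr V(A)$ being $n\alpha$-uniformly convex). Both routes produce sub-Gaussian concentration at scale $(n\alpha)^{-1/2}$ for $1$-Lipschitz functionals, and both rely on the same cited facts about $\E\bnorm{\widetilde{A}}_{op}$ and $\E\bnorm{\widetilde{A}}_{HS}$.

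The genuine gap is the sentence ``Hence $\E\bigl(\bnorm{\widetilde{A}}_{op}^2/\bnorm{\widetilde{A}}_{HS}\bigr)\le c_3/\sqrt{n}$.'' From separate bounds on $\E\bnorm{\widetilde{A}}_{op}$ and $\E\bnorm{\widetilde{A}}_{HS}$ you cannot conclude anything about the expectation of the ratio; there is no inequality of the form $\E(X^2/Y)\lesssim(\E X)^2/\E Y$. This is exactly where the paper spends its effort: it uses the sub-Gaussian concentration of both $\bnorm{\widetilde{A}}_{op}$ and $\bnorm{\widetilde{A}}_{HS}$ (each $1$-Lipschitz in $A$) to bound the tail $\Prob\bigl[\bnorm{\widetilde{A}}_{op}^2/\bnorm{\widetilde{A}}_{HS}\ge t\bigr]$, splitting on the event $\{\bnorm{\widetilde{A}}_{HS}<\tfrac12\beta_1\sqrt{n}\}$ for moderate $t$ and using $\bnorm{\widetilde{A}}_{op}\le\bnorm{\widetilde{A}}_{HS}$ for large $t$, and then integrates the tail. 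Your proposal needs this argument, or an equivalent one, to go through; as written the key estimate is simply asserted. Separately, your final paragraph attempting to absorb the fluctuation term $\sqrt{d}\cdot O(n^{-1})$ into $\frac{\kappa}{\sqrt{n}}\sum_i\srank{B_i}^{-1}$ does not work as argued: if all $\srank{B_i}\approx n$ with $d$ bounded, then $\sum_i\srank{B_i}^{-1}\approx d/n$ and the target $\kappa d/n^{3/2}$ is smaller than what you need to absorb, so the claim ``if $\sum_i\srank{B_i}^{-1}$ is small the normalization already forces $d$ small'' is false. (The paper is itself terse on this last point.)
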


Theorem \ref{T:RL-B-C} and Corollary \ref{T:invariant-C} are proved in
Section \ref{S:proofs-invariant}.

\subsection{A probabilistic perspective on the Schur--Horn theorem}

The Schur--Horn theorem characterizes pairs of sequences
$(d_1, \dots, d_n)$ and $(\lambda_1, \dots, \lambda_n)$ of real
numbers which can occur as the diagonal entries and eigenvalues,
respectively, of a real symmetric or complex Hermitian matrix.
Specifically, if $A$ is real symmetric or Hermitian, with diagonal
entries $d_1,\ldots,d_n$ and eigenvalues $\lambda_1,\ldots,\lambda_n$,
then the sequence $(d_i)_{1\le i\le n}$ is \emph{majorized} by
$(\lambda_i)_{1\le i\le n}$ (written
$(d_i)_{i=1}^n \prec (\lambda_i)_{i=1}^n$); that is, $(d_i)_{i=1}^n$
is a convex combination of permutations of $(\lambda_i)_{i=1}^n$.
Conversely, if $(d_i)_{i=1}^n \prec (\lambda_i)_{i=1}^n$, then there
is a real symmetric matrix with diagonal entries $d_1,\ldots,d_n$ and
eigenvalues $\lambda_1,\ldots,\lambda_n$.  See \cite[Section
9.B]{MaOlAr} for further discussion, proofs, and references.

Given a sequence $\lambda_1,\ldots,\lambda_n$ of eigenvalues, the
Schur--Horn theorem identifies exactly which sequences of diagonal
entries are possible.  We now consider this question
probabilistically: given a sequence $\lambda_1,\ldots,\lambda_n$, what
are the diagonal entries of a Hermitian matrix with these eigenvalues
typically like?  This is analogous to the single ring theorem
considered in \cite{Feze,WeFy,GuKrZe,RuVe,GuZe-srt,BeGe}, which can
likewise be viewed as a probabilistic counterpart of the Weyl--Horn
theorem which relates eigenvalues and singular values.  The natural
model of a random Hermitian matrix with the given eigenvalues is of
course $A=U\Lambda U^*$, with $U$ distributed according to Haar
measure on $\Unitary{n}$. (The joint distribution of diagonal entries
was also considered in \cite{SaFySo}; see also \cite[Section
2.2]{FySo}.)

\begin{thm}
  \label{T:Schur-Horn}
  For each $n\in\N$, let
\(
\Lambda_n = \diag(\lambda_1^{(n)}, \dots, \lambda_n^{(n)}),
\)
be a fixed diagonal matrix, and let
$\mu_n$ be the spectral measure of $n^{-1/2} \Lambda_n$:
\[
\mu_n = \frac{1}{n} \sum_{i=1}^n \delta_{n^{-1/2} \lambda_i^{(n)}}.
\]
Suppose that there is a probability measure $\mu$ with mean $m$ and
variance $\sigma^2>0$, such that $W_2(\mu_n,\mu)\to0$.

Let $A_n = U_n \Lambda_n U_n^*$ with $U_n$ Haar-distributed in
$\Unitary{n}$, and   let
$\nu_n$ be the empirical measure of the diagonal entries of
$A_n$:
\[
\nu_n = \frac{1}{n} \sum_{i=1}^n \delta_{a_{ii}^{(n)}}.
\]

If $\bnorm{\widetilde{\Lambda_n}}_{op} = o\left(n\right)$, then
$\nu_n\to \Normal{m}{\sigma^2}$ weakly in probability.  If moreover
$\bnorm{\widetilde{\Lambda_n}}_{op} = o\bigl(\frac{n}{\sqrt{\log
    n}}\bigr)$,
then $\nu_n \to \Normal{m}{\sigma^2}$ weakly almost surely.

Furthermore, if for some constant $K$ and for all $n$,
$\bnorm{\widetilde{\Lambda_n}}_{op} \le K \sqrt{n}$,
then there are constants $\kappa_1, \kappa_2, \kappa_3 > 0$ depending
only on $K$ such that
\begin{align*}
\kappa_1 \sqrt{\log n} &\le \E \Bigl(\max_{1\le i \le n} a_{ii}^{(n)} -
\frac{1}{n} \tr \Lambda_n \Bigr) \\&\le \E \Bigl(\max_{1\le i \le n}\left| a_{ii}^{(n)} -
\frac{1}{n} \tr \Lambda_n\right| \Bigr)\le
\kappa_2 \sqrt{\log n}
\end{align*}
for every $n$, and with probability $1$,
\[
\Bigl(\max_{1\le i \le n} \left|a_{ii}^{(n)}  -
\frac{1}{n} \tr \Lambda_n\right| \Bigr) \le \kappa_3 \sqrt{\log n}
\]
for all sufficiently large $n$.
\end{thm}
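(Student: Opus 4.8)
The plan is to reduce the first two assertions to a one–dimensional normal approximation for a single diagonal entry together with a concentration estimate, and to treat the extreme–value assertion by a separate tail bound. Throughout I would work with the traceless recentering: the diagonal entries at issue are those of $\widetilde{A_n}=U_n\widetilde{\Lambda_n}U_n^*$, i.e.\ $a_{ii}^{(n)}-\frac1n\tr\Lambda_n$ (the quantity already featuring in the hypotheses and in the final display), so I may take $\tr\Lambda_n=0$ and $m=0$. Put $\sigma_n^2=\frac1{n^2}\bnorm{\widetilde{\Lambda_n}}_{HS}^2$, the variance of $\mu_n$; since $W_2(\mu_n,\mu)\to0$ forces convergence of first and second moments, $\sigma_n\to\sigma>0$, so $\bnorm{\widetilde{\Lambda_n}}_{HS}=n\sigma_n\asymp n$. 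To get $\nu_n\to\Normal{0}{\sigma^2}$ weakly it is enough to show $W_1(\nu_n,\Normal{0}{\sigma^2})\to0$ in probability (resp.\ almost surely), and the triangle inequality splits this into $W_1(\E\nu_n,\Normal{0}{\sigma^2})$ and $W_1(\nu_n,\E\nu_n)$, where $\E\nu_n$ is the deterministic law of one diagonal entry $a_{11}^{(n)}$.

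A diagonal entry of $U_n\widetilde{\Lambda_n}U_n^*$ is distributed as $\bnorm{\widetilde{\Lambda_n}}_{HS}\inprod{B_1Z}{Z}$ with $Z$ uniform on the complex unit sphere and $B_1=\widetilde{\Lambda_n}/\bnorm{\widetilde{\Lambda_n}}_{HS}$, so the first term is controlled by Theorem~\ref{T:R1-T0-C} with $d=1$ together with the elementary bound $\norm{B_1}_4^2\le\bnorm{\widetilde{\Lambda_n}}_{op}/\bnorm{\widetilde{\Lambda_n}}_{HS}$: this gives $W_1(\E\nu_n,\Normal{0}{\sigma^2})\lesssim \bnorm{\widetilde{\Lambda_n}}_{op}/(n\sigma_n)+\bigl|\tfrac{\sqrt{n(n+1)}}{\bnorm{\widetilde{\Lambda_n}}_{HS}}-\tfrac1\sigma\bigr|\to0$ as soon as $\bnorm{\widetilde{\Lambda_n}}_{op}=o(n)$. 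For the second term, I would first show $\E W_1(\nu_n,\E\nu_n)\to0$: writing $W_1$ in one dimension as $\int|F_{\nu_n}-F_{\E\nu_n}|$, a Lipschitz smoothing of the indicators $\ind{(-\infty,x]}$ combined with the Poincar\'e inequality on $\Unitary{n}$ applied to $U\mapsto\frac1n\sum_i f(a_{ii}^{(n)})$ suffices, the point being that the gradients $\nabla_U a_{ii}^{(n)}$ have mutually disjoint row supports, so $\bigl\|\nabla\bigl(\tfrac1n\sum_i f(a_{ii})\bigr)\bigr\|^2\le\frac{\norm{f'}_\infty^2}{n^2}\sum_i\|\nabla a_{ii}\|^2=\frac{4\norm{f'}_\infty^2}{n^2}\bnorm{\widetilde{\Lambda_n}}_{HS}^2=4\norm{f'}_\infty^2\sigma_n^2=O(1)$. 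Then, since $U\mapsto W_1(\nu_n(U),\E\nu_n)$ is Lipschitz with constant $O(\bnorm{\widetilde{\Lambda_n}}_{op}/\sqrt n)$ (each $U\mapsto a_{ii}$ being $2\bnorm{\widetilde{\Lambda_n}}_{op}$–Lipschitz, combined via $\frac1n\sum|\cdot|\le n^{-1/2}\|\cdot\|_2$ and the orthogonality of the $\nabla_U a_{ii}$), concentration of measure on $\Unitary{n}$ gives $\Prob[|W_1(\nu_n,\E\nu_n)-\E(\cdot)|\ge t]\le 2\exp(-cn^2t^2/\bnorm{\widetilde{\Lambda_n}}_{op}^2)$, which vanishes for each $t>0$ when $\bnorm{\widetilde{\Lambda_n}}_{op}=o(n)$ and is summable in $n$ --- so Borel--Cantelli applies --- when $\bnorm{\widetilde{\Lambda_n}}_{op}=o(n/\sqrt{\log n})$. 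Combining the three estimates proves the first two assertions.

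For the extreme–value statement I assume $\bnorm{\widetilde{\Lambda_n}}_{op}\le K\sqrt n$. Since $a_{11}^{(n)}$ is distributed as $v^*\widetilde{\Lambda_n}v$ for $v$ uniform on the complex sphere, with $\E[v^*\widetilde{\Lambda_n}v]=\frac1n\tr\widetilde{\Lambda_n}=0$ and $v\mapsto v^*\widetilde{\Lambda_n}v$ being $2\bnorm{\widetilde{\Lambda_n}}_{op}$–Lipschitz, L\'evy's inequality on the sphere gives $\Prob[a_{11}^{(n)}\ge t]\le 2\exp(-cnt^2/\bnorm{\widetilde{\Lambda_n}}_{op}^2)\le 2\exp(-ct^2/K^2)$ for every $t\ge0$. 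A union bound over the $n$ (exchangeable) diagonal entries yields $\E\max_i a_{ii}^{(n)}\le\kappa_2\sqrt{\log n}$, and the same tail with a Borel--Cantelli argument (a suitable $\kappa_3$ depending only on $K$) gives $\max_i a_{ii}^{(n)}\le\kappa_3\sqrt{\log n}$ for all large $n$ almost surely. For the lower bound I would apply Corollary~\ref{T:DL-entries-C} with $d=\lfloor n^{1/4}\rfloor$ to the first $d$ diagonal entries: its error term is $9d\sqrt n\,\bnorm{\widetilde{\Lambda_n}}_{op}^2/\bnorm{\widetilde{\Lambda_n}}_{HS}^2\le 9dK^2/(\sqrt n\,\sigma_n^2)\to0$, and since $x\mapsto\max_i x_i$ is $1$–Lipschitz, $\E\max_{i\le d}a_{ii}^{(n)}\ge\frac{\bnorm{\widetilde{\Lambda_n}}_{HS}}{\sqrt{n^2-1}}\bigl(\E\max_{i\le d}g_i-o(1)\bigr)\gtrsim\sigma\sqrt{\log d}\asymp\sqrt{\log n}$; as $\max_{i\le n}a_{ii}^{(n)}\ge\max_{i\le d}a_{ii}^{(n)}$, this is the required lower bound (with $\kappa_1$ depending on $K$ and on $\sigma=\lim_n\sigma_n$).

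The hard part is the extreme–value assertion: Theorem~\ref{T:R1-T0-C} and Corollary~\ref{T:DL-entries-C} only compare distributions in $W_1$ and so give no information at the scale $\sqrt{\log n}$ that governs $\max_i a_{ii}^{(n)}$, so the separate sphere–concentration tail bound above is unavoidable, and one must check that its constant depends on $K$ alone. The remaining work is quantitative bookkeeping: to obtain the thresholds $o(n)$ and $o(n/\sqrt{\log n})$ (rather than, say, $o(\sqrt n)$), one has to exploit the mutual orthogonality --- disjoint row supports --- of the gradients $\nabla_U a_{ii}^{(n)}$ in both the Poincar\'e step and the concentration step, instead of estimating the $n$ diagonal entries one at a time, and to track how the log–Sobolev constant of $\Unitary{n}$ interacts with these Lipschitz bounds.
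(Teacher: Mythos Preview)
Your approach is essentially the same as the paper's: one–dimensional normal approximation (Theorem~\ref{T:R1-T0-C}) for $\E\nu_n$, concentration on $\Unitary{n}$ for the fluctuation, concentration plus union bound for the upper tail of the maximum, and Corollary~\ref{T:DL-entries-C} applied to $d\asymp n^{\alpha}$ diagonal entries for the lower bound (the paper takes $d=\lfloor\sqrt n\rfloor$, you take $n^{1/4}$; either works).

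Two places where you are working harder than necessary. First, the theorem claims only \emph{weak} convergence, so there is no need to control $\E W_1(\nu_n,\E\nu_n)$ or to aim for $W_1(\nu_n,\Normal{0}{\sigma^2})\to0$; it suffices to show $\int f\,d\nu_n\to\E f(\sigma g)$ in probability (resp.\ a.s.) for each fixed Lipschitz test function, which follows immediately once you have concentration of $U\mapsto\int f\,d\nu_n$. Second, the orthogonality of the Euclidean gradients $\nabla_U a_{ii}$ is \emph{not} required to obtain the thresholds $o(n)$ and $o(n/\sqrt{\log n})$. The elementary chain
\[
\Bigl|\tfrac1n\sum_i f(a_{ii})-\tfrac1n\sum_i f(b_{ii})\Bigr|
\le \tfrac1n\sum_i|a_{ii}-b_{ii}|
\le \tfrac1{\sqrt n}\Bigl(\sum_i|a_{ii}-b_{ii}|^2\Bigr)^{1/2}
\le \tfrac1{\sqrt n}\norm{A-B}_{HS}
\le \tfrac{2\norm{\widetilde{\Lambda_n}}_{op}}{\sqrt n}\norm{U-V}_{HS}
\]
already gives Lipschitz constant $2\norm{\widetilde{\Lambda_n}}_{op}/\sqrt n$ and hence the concentration rate $\exp(-cn^2t^2/\norm{\widetilde{\Lambda_n}}_{op}^2)$. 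Your orthogonality observation sharpens the Lipschitz bound to $2\sigma_n$, which is a nice refinement but not needed for the statement as written.
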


Theorem \ref{T:Schur-Horn} is proved in Section
\ref{S:proofs-Schur-Horn}.

\section{Proofs of the main results}
\label{S:proofs-main}

Our main technical tool is the multivariate version of Stein's method
of exchangeable pairs introduced in \cite{ChMe}.  (The method was
extended and refined in \cite{ReRo,EM-Stein-multi,DoSt}, though we
will not particularly make use of those improvements here.)  The following
essentially restates \cite[Theorem 5]{ChMe} (cf.\ also \cite[Theorem
4]{EM-Stein-multi}) and (for the final statement) \cite[Theorem 1]{EM-linear}. 

\begin{thm}
  \label{T:ChMe}
  Suppose that $X$ be a random vector in $\R^d$, and for each $\eps
  \in (0,1)$ there exists a random vector $X_\eps$ such that
  $(X,X_\eps)$ is exchangeable.  Suppose there exist constants $\alpha, \sigma
  > 0$, a function $s(\eps)$, and a random $d \times d$ matrix $F$
  such that
  \begin{enumerate}
  \item $\ds \frac{1}{s(\eps)} \E \bigl[X_\eps - X \big\vert X \bigr]
    \xrightarrow[\eps \to 0]{L_1} - \alpha X$,
  \item $\ds \frac{1}{s(\eps)} \E \bigl[(X_\eps - X) (X_\eps - X)^T
    \big\vert X \bigr] \xrightarrow[\eps \to 0]{L_1}
    2 \alpha \sigma^2 I_d + \E [F \mid X]$, and
  \item for each $\rho > 0$, $\ds \frac{1}{s(\eps)} \E
    \left[\norm{X_\eps - X}^2 \ind{\norm{X_\eps - X}^2 > \rho} \right]
      \xrightarrow{\eps \to 0} 0$.
  \end{enumerate}
  If $g=(g_1,\ldots,g_d)$ is a standard Gaussian random vector, then
  \[
  W_1 (X,\sigma g) \le \frac{1}{2 \alpha \sigma} \E \norm{F}_{HS}.
  \]
  Moreover, if $d = 1$ then
  \[
  d_{TV} (X, \sigma g) \le \frac{1}{\alpha \sigma^2} \E \abs{F}.
  \]
\end{thm}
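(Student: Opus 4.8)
The statement compiles results of \cite{ChMe}, \cite{EM-Stein-multi} and \cite{EM-linear}, so the plan is to reconstruct the exchangeable-pairs form of Stein's method underlying them rather than reprove everything from scratch. The starting point is the Stein equation for $\Normal{0}{\sigma^2 I_d}$: given a $1$-Lipschitz test function $f\colon\R^d\to\R$, let $(P_t)_{t\ge0}$ be the Ornstein--Uhlenbeck semigroup with stationary law $\Normal{0}{\sigma^2 I_d}$, so that $P_tf(x)=\E f\bigl(e^{-t}x+\sigma\sqrt{1-e^{-2t}}\,\gamma\bigr)$ for an auxiliary $\R^d$-valued standard Gaussian vector $\gamma$, and set $h:=-\int_0^\infty\bigl(P_tf-\E f(\sigma g)\bigr)\,dt$. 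This integral converges and $h$ solves $\sigma^2\Delta h(x)-\inprod{x}{\nabla h(x)}=f(x)-\E f(\sigma g)$, whence
\[
\E f(X)-\E f(\sigma g)=\E\bigl[\sigma^2\Delta h(X)-\inprod{X}{\nabla h(X)}\bigr].
\]
Because $P_tf$ is smooth for $t>0$, differentiating under the integral and integrating by parts in $\gamma$ shows that $\nabla h$ is bounded, $\nabla^2 h$ is bounded and continuous, and --- this is the estimate responsible for the Hilbert--Schmidt norm in the conclusion --- that for any deterministic $d\times d$ matrix $M$,
\[
\abs{\tr\bigl(M\,\nabla^2 h(x)\bigr)}\le\tfrac1\sigma\norm{M}_{HS}\qquad\text{for all }x,
\]
which follows from $\nabla^2 P_tf(x)=\tfrac{e^{-2t}}{\sigma\sqrt{1-e^{-2t}}}\,\E\bigl[\nabla f(e^{-t}x+\sigma\sqrt{1-e^{-2t}}\gamma)\,\gamma^T\bigr]$, the bound $\norm{\nabla f}\le1$, the inequality $\E\norm{M^T\gamma}\le\norm{M}_{HS}$, and the identity $\int_0^\infty\tfrac{e^{-2t}}{\sqrt{1-e^{-2t}}}\,dt=1$.

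Next I would feed in the exchangeable pair. Exchangeability gives $X_\eps$ and $X$ the same distribution, hence $\E h(X_\eps)=\E h(X)$; a second-order Taylor expansion of $h$ about $X$ rewrites this as
\[
0=\frac{1}{s(\eps)}\E\inprod{\nabla h(X)}{X_\eps-X}+\frac{1}{2s(\eps)}\E\bigl[(X_\eps-X)^T\nabla^2 h(X)(X_\eps-X)\bigr]+\frac{1}{s(\eps)}\E R_\eps,
\]
where $R_\eps$ is the Taylor remainder. Letting $\eps\to0$, hypothesis (1) and the boundedness of $\nabla h$ send the first term to $-\alpha\,\E\inprod{X}{\nabla h(X)}$, while hypothesis (2) and the boundedness of $\nabla^2 h$ send the second to $\alpha\sigma^2\,\E\Delta h(X)+\tfrac12\,\E\tr\bigl(F\,\nabla^2 h(X)\bigr)$. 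Granting that $\tfrac1{s(\eps)}\E R_\eps\to0$, these combine to
\[
\E f(X)-\E f(\sigma g)=-\frac1{2\alpha}\,\E\tr\bigl(F\,\nabla^2 h(X)\bigr),
\]
and the regularity estimate bounds the right-hand side in absolute value by $\tfrac1{2\alpha\sigma}\E\norm{F}_{HS}$. Taking the supremum over $1$-Lipschitz $f$ and using Kantorovich--Rubinstein duality then gives $W_1(X,\sigma g)\le\tfrac1{2\alpha\sigma}\E\norm{F}_{HS}$.

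The step I expect to be the real obstacle is showing $\tfrac1{s(\eps)}\E R_\eps\to0$: since $f$ is only Lipschitz, $\nabla^2 h$ is bounded and continuous but need not be Lipschitz, so $R_\eps$ cannot be controlled by a constant multiple of $\norm{X_\eps-X}^3$. I would instead split on $\{\norm{X_\eps-X}>\rho\}$, where hypothesis (3) drives the contribution to $0$, and on $\{\norm{X_\eps-X}\le\rho\}$, where the mean-value form $R_\eps=\tfrac12(X_\eps-X)^T[\nabla^2 h(\xi)-\nabla^2 h(X)](X_\eps-X)$ together with the boundedness of $\nabla^2 h$, its uniform continuity on $\{\norm{x}\le R\}$, and the tightness of $X$ bound the contribution by $\omega_R(\rho)\cdot\tfrac1{s(\eps)}\E\norm{X_\eps-X}^2$ plus an error governed by $\Prob(\norm{X}>R)$; since $\tfrac1{s(\eps)}\E\norm{X_\eps-X}^2$ stays bounded by hypothesis (2), letting $\eps\to0$, then $\rho\to0$, then $R\to\infty$ closes the estimate. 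Finally, for the $d=1$ total-variation bound one runs the same scheme with indicator test functions; the corresponding Stein solution fails to have a bounded second derivative, so the second-order expansion must be replaced by the more delicate argument of \cite[Theorem~1]{EM-linear}, which substitutes a concentration estimate for the missing regularity and then bookkeeps to $d_{TV}(X,\sigma g)\le\tfrac1{\alpha\sigma^2}\E\abs{F}$.
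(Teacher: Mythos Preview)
The paper does not prove this theorem at all: it is stated as a restatement of \cite[Theorem 5]{ChMe} (with \cite[Theorem 4]{EM-Stein-multi} and \cite[Theorem 1]{EM-linear} for the total-variation part) and is used as a black box. Your proposal is therefore not a comparison against the paper's proof but a reconstruction of the argument in the cited references, and in that respect your outline is faithful to the exchangeable-pairs Stein's method developed there --- the Ornstein--Uhlenbeck construction of the Stein solution, the Hessian bound $\abs{\tr(M\nabla^2 h)}\le \sigma^{-1}\norm{M}_{HS}$, the second-order Taylor expansion combined with exchangeability, and the splitting argument for the remainder are exactly the ingredients of \cite{ChMe}.
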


We will also use these bounds in the equivalent forms
\[
W_1 \left(\frac{1}{\sigma} X, g\right) \le \frac{1}{2
  \alpha \sigma^2} \E \norm{F}_{HS}.
\]
and
\[
d_{TV} \left(\frac{1}{\sigma}X, g\right) \le
\frac{1}{\alpha \sigma^2} \E \abs{F}.
\]

As mentioned above, Theorem \ref{T:ChMe} is one version of Stein's
method for normal approximation.  The basic idea of this version is as
follows. Suppose that the random vector $X$ has ``continuous
symmetries'' which allow one to make a small (parametrized by
$\eps$) random change to $X$ which preserves its distribution.  If
$X$ were exactly Gaussian and this small random change could be made
so that $(X,X_\eps)$ were jointly Gaussian, then we would have
that $X_\eps\overset{d}{=}\sqrt{1-\eps^2}X+\eps Y$ for $X$
and $Y$ independent.  The conditions of the theorem are then
approximate versions of what happens, up to third order, in this
jointly Gaussian case; the random matrix $F$ is thought of as a small
error.  The theorem then says that these conditions are enough to
guarantee approximate Gaussian behavior.

\begin{proof}[Proof of Theorem \ref{T:DL-T0-C}]
  Since $\tr B_i = 0$, $X_i = \tr \widetilde{\Lambda} B_i$.  We may
  therefore assume without loss of generality that $\tr \Lambda = 0$.

  To apply Theorem \ref{T:ChMe}, we must construct an
  appropriate family of random vectors $X_\eps$; our
  construction is an adaptation of one first used by Stein in \cite{Stein}, and 
  later applied in \cite{EM-linear,ChMe}.

  Define
  \begin{equation*}
    \begin{split}
      R_\eps & := \begin{pmatrix} \sqrt{1-\eps^2} & \eps \\ -\eps &
        \sqrt{1-\eps^2} \end{pmatrix} \oplus I_{n-2} \\
      & = I_n + \eps \begin{pmatrix} 0 & 1 \\ -1 & 0 \end{pmatrix} \oplus
      0_{n-2}
      - \dfrac{\eps^2}{2} I_2 \oplus 0_{n-2} + O(\eps^3).
    \end{split}
  \end{equation*}  
  Let $V \in \Unitary{n}$ be Haar-distributed independently of $U$,
  and define $V_\eps := VR_\eps V^*$ and
  $A_\eps := U V_\eps \Lambda V_\eps^* U^*$. Note that
  $(U,UV_\eps)$ is exchangeable by the translation invariance of
  Haar measure.  For each $i$,
  let $(X_\eps)_i = \tr (A_\eps B_i)$.
    
  For notational convenience, define the $n \times 2$ matrix
  $K = [ v_1 v_2 ]$, where $v_i$ are the columns of $V$, and let 
  $Q = K \bigl(\begin{smallmatrix} 0 & 1 \\ -1 &
    0 \end{smallmatrix}\bigr) K^* = v_1 v_2^* - v_2 v_1^*$. Then
  \[
    V_\eps = I_n + \eps Q - \frac{\eps^2}{2} KK^* +
    O(\eps^3)
  \]
  and so (using that $Q^* = -Q$).
  \begin{equation}
    \label{E:A-difference-unitary}
    A_\eps - A =  U \left[ \eps ( Q \Lambda - \Lambda Q )
      - \eps^2 \left( Q \Lambda Q 
        + \frac{1}{2} KK^* \Lambda + \frac{1}{2} \Lambda K K^* \right)
    \right] U^* + O(\eps^3).
  \end{equation}
    
  It is easy to show that $\E Q = 0$ (by conditioning on $v_1$, say) and
  $\E K K^* = \E \left[v_1 v_1^* + v_2 v_2^*\right] = \frac{2}{n}I_n$. Moreover,
  from \cite[Lemma 14]{ChMe} it follows that
  \begin{equation*}
    \label{E:QLambdaQ}
    \E Q \Lambda Q = \frac{2}{(n-1)n(n+1)} \Lambda
  \end{equation*}
  and 
  \begin{equation}
    \label{E:QFG-unitary}
    \begin{split}
      \E [\tr (QF) \tr (QG) ]
      & = \frac{2}{(n-1)n(n+1)} \bigl((\tr F)(\tr G) - n \tr (FG)\bigr)
    \end{split}
  \end{equation}
  for $F, G \in M_n(\C)$.

  Therefore,
  \begin{equation*}\begin{split}
    \label{E:E-A-difference-unitary}
    \E [A_\eps - A \mid U ] &= - \eps^2 U \left[ \frac{2}{(n-1)n(n+1)}
      \Lambda + \frac{2}{n} \Lambda \right] U^* + O(\eps^3)
    = - \frac{2 n \eps^2}{n^2-1} A + O(\eps^3),
  \end{split}\end{equation*}
  and consequently
  \begin{equation*}
    \label{E:E-X-difference-unitary}
    \E [X_\eps - X \mid X ] = - \frac{2 n \eps^2}{n^2-1} X + O(\eps^3),
  \end{equation*}
  so that Theorem \ref{T:ChMe} applies with $s(\eps) = \eps^2$ and
  $\alpha = \frac{2 n}{n^2-1}$.

To identify $\sigma^2$ and $F$ from Theorem \ref{T:ChMe}, we first compute expectations conditional on $U$.  Writing $C_i := U^* B_i U$ and using $\sim$ to denote equality to top order in $\eps$,
  \begin{equation}
    \label{E:Xeps-X-square-unitary}
    \begin{split}
      \E [(X_\eps - X)_i & (X_\eps - X)_j \mid U ] \\
      & \sim
      \eps^2 \E \bigl[ \tr[U(Q\Lambda - \Lambda Q)U^* B_i] 
      \tr[U(Q\Lambda - \Lambda Q)U^* B_j] \mid U \bigr] \\
      & = \eps^2 \E \bigl[ \tr[ (Q\Lambda - \Lambda Q) C_i] 
      \tr[(Q\Lambda - \Lambda Q) C_j] \mid U \bigr].
    \end{split}
  \end{equation}
  By \eqref{E:QFG-unitary},
  \begin{equation}
    \label{E:QLLQ-unitary}
    \begin{split}
      \E \bigl[ \tr[ (Q\Lambda - \Lambda Q) C_i] & \tr[(Q\Lambda -
      \Lambda
      Q) C_j] \mid U \bigr] \\
      & = \E \bigl[ \tr(Q \Lambda C_i) \tr(Q \Lambda C_j) + \tr (Q C_i
      \Lambda) \tr (Q C_j \Lambda) \\
      & \qquad - \tr(Q \Lambda C_i) \tr (Q C_j
      \Lambda) - \tr(Q C_i
      \Lambda)\tr(Q \Lambda C_j) \mid U \bigr] \\
      & = \frac{2}{(n-1)(n+1)} \tr \bigl[ - \Lambda C_i \Lambda C_j - C_i
      \Lambda C_j \Lambda
      + \Lambda C_i C_j \Lambda + C_i \Lambda \Lambda C_j \bigr] \\
      & = \frac{2}{(n-1)(n+1)} \tr \bigl[\Lambda^2 C_i C_j + \Lambda^2 C_j
      C_i - 2 \Lambda C_i \Lambda C_j \bigr] \\
      & = \frac{2}{(n-1)(n+1)} \tr \bigl[ A^2 B_i B_j +
      A^2 B_j B_i - 2 A B_i A B_j\bigr].
    \end{split}
  \end{equation}

Now
\begin{equation}
  \label{E:A^2-unitary}
  \E A^2 = \E U \Lambda^2 U^* = \E \sum_{i=1}^n \lambda_i^2 u_i u_i^* =
  \sum_{i=1}^n \lambda_i^2 \frac{1}{n} I_n =
  \frac{\norm{\Lambda}_{HS}^2}{n} I_n.
\end{equation}
Supposing that $D$ is diagonal,
\begin{equation*}
  \begin{split}
    \E \tr (A D A C) & = \E \tr (U \Lambda U^* D U \Lambda U^* C) \\
    & = \E \sum_{ijk\ell m} u_{ij} \lambda_j \overline{u_{kj}} d_{kk}
    u_{k \ell} \lambda_\ell
    \overline{u_{m \ell}} c_{mi} \\
    & = \sum_{ijk\ell m} \lambda_j \lambda_\ell d_{kk} c_{mi} \E u_{ij} u_{k \ell}
    \overline{u_{kj}} \overline{u_{m\ell}}.
  \end{split}
\end{equation*} 

The latter expectation is nonzero only if $i = m$, and then by \cite[Lemma 14]{ChMe},
\begin{equation}
  \label{E:ADAC}
  \begin{split}
    \E \tr (A D A C) & = \E \tr (U \Lambda U^* D U \Lambda U^* C) \\
    & = \sum_{ikj\ell} \lambda_j \lambda_\ell d_{kk} c_{ii} \E u_{ij}
    u_{k\ell} \overline{u_{i \ell}} \overline{u_{k j}} \\
    & = \frac{1}{(n-1) n (n+1)} \sum_{ikj\ell} \lambda_j \lambda_\ell
    d_{kk} c_{ii} \bigl[ n \delta_{ik} + n \delta_{j\ell} -
    \delta_{ik}\delta_{j\ell}  - 1 \bigr] \\
    & = \frac{1}{(n-1)n(n+1)} \left[n \norm{\Lambda}_{HS}^2 (\tr D) (\tr
      C) - \norm{\Lambda}_{HS}^2 \tr (DC) \right].
  \end{split}
\end{equation} 

If $B$ is Hermitian, we may write $B = YDY^*$ for $Y$ unitary and $D$
diagonal. By the translation invariance of Haar measure, $U$ is equal in distribution to $YU$.  Making this substitution inside the expectation and then using \eqref{E:ADAC} yields
\begin{equation}
  \label{E:tr-ABAC-unitary}
  \begin{split}
    \E \tr (ABAC) & = \E \tr (U \Lambda U^* Y D Y^* U \Lambda U^* C)
    \\
    & = \E \tr (Y U \Lambda U^* D U \Lambda U^* Y^* C) \\
    & = \E \tr (A D A Y^* C Y) \\
    & = \frac{\norm{\Lambda}_{HS}^2}{(n-1)n(n+1)} \left[n (\tr D) (\tr
      Y^* C Y) - \tr (D Y^* C Y) \right] \\
    & = \frac{\norm{\Lambda}_{HS}^2}{(n-1)n(n+1)} \left[n (\tr B) (\tr
      C) - \tr (B C) \right].
  \end{split}
\end{equation}

By \eqref{E:Xeps-X-square-unitary}, \eqref{E:QLLQ-unitary},
\eqref{E:A^2-unitary}, and \eqref{E:tr-ABAC-unitary}, (and the facts
that $\tr (B_i B_j) = \delta_{ij}$ and $\tr B_i = 0$),
\begin{equation*}
  \begin{split}
    \E [(W_\eps - W)_i (W_\eps - W)_j] 
    & \sim \eps^2 \E \bigl( \tr[ (Q\Lambda - \Lambda Q) C_i]
    \tr[(Q\Lambda - \Lambda Q) C_j] \bigr) \\
    & = \frac{2\eps^2}{(n-1)(n+1)} \E \tr \bigl[ A^2
    B_i B_j + A^2 B_j B_i -  2A B_i A B_j \bigr] \\
    & = \frac{2 \norm{\Lambda}_{HS}^2 \eps^2}{(n-1)(n+1)}
    \left( 2 \frac{ \delta_{ij}}{n} + 2 \frac{\delta_{ij}}{(n-1)n(n+1)} \right) \\
    & = \frac{4 n \norm{\Lambda}_{HS}^2 \eps^2}{(n^2-1)^2}
    \delta_{ij}.
  \end{split}
\end{equation*}

Based on this we take
\[
\sigma^2 = \frac{\norm{\Lambda}_{HS}^2}{n^2 - 1} 
\]
and
\begin{equation*}
  \label{E:F-unitary}
  \begin{split}
    F_{ij} & = \frac{2}{n^2-1} \tr \bigl[ A^2 B_i B_j + A^2 B_j
    B_i - 2A B_i A B_j \bigr] - \frac{4 n
      \norm{\Lambda}_{HS}^2}{(n^2-1)^2} \delta_{ij}. \\
    & = \frac{2}{n^2-1} \left[\tr \bigl([A,B_i] [A,B_j]^*\bigr)
    - \E \tr \bigl([A,B_i] [A,B_j]^*\bigr)\right],
  \end{split}
\end{equation*}
where $[A,B]=AB-BA$ is the commutator of the matrices $A$ and $B$.

To apply Theorem \ref{T:ChMe}, we need to estimate
\begin{equation}
  \label{E:F-variance}
\E \norm{F}_{HS} \le \sqrt{\E \norm{F}_{HS}^2} =
\frac{2}{n^2 - 1} \sqrt{\sum_{i,j=1}^d \var \tr \bigl([A,B_i]
  [A,B_j]^*\bigr)}.
\end{equation}

We will estimate the variances in \eqref{E:F-variance} using a
Poincar\'e inequality. As is well known, if $\lambda_1$ is the
smallest nonzero eigenvalue of $-\Delta$ (where $\Delta$ is the
Laplace--Beltrami operator) on a compact Riemannian manifold $\Omega$,
then
\[
\var f(x) \le \frac{1}{\lambda_1} \E \norm{\nabla f(x)}^2
\]
for any smooth function $f:\Omega \to \R$, where $x$ is a random point
distributed according to normalized volume measure on $\Omega$ (see
e.g.\ \cite[Section 3.1]{Ledoux}). An argument in the proof of
\cite[Theorem 3.9]{Voiculescu} shows that if $\Omega = \Unitary{n}$,
then $\lambda_1 = n$.  It follows that if $f:\Unitary{n} \to \R$ is
$L$-Lipschitz with respect to the geodesic distance $d_g$ on
$\Unitary{n}$, then $\var f(U) \le \frac{1}{n}L^2$.  So it suffices
estimate the Lipschitz constant of functions of the form
\[
f(U) = \tr \bigl([U\Lambda U^*,B] [U\Lambda U^*,C]^*\bigr).
\]

Using $U$ and $V$ for the moment to stand for arbitrary matrices in
$\Unitary{n}$ and $A, A', B, C$ to stand for arbitrary matrices, we
observe first that
\begin{equation}
  \label{E:commutator-bound}
  \norm{[A,B]}_{HS} \le 2 \norm{A}_{HS} \norm{B}_{op}
\end{equation}
and hence
\begin{equation}
  \label{E:commutator-difference}
  \norm{[A,B] - [A',B]}_{HS}
  \le 2 \norm{B}_{op} \norm{A - A'}_{HS}.
\end{equation}
Also,
\begin{equation}
  \label{E:U-V}
  \norm{U\Lambda U^* - V\Lambda V^*}_{HS} = \norm{(U-V) \Lambda U^* + V \Lambda
    (U-V)^*}_{HS}
  \le 2 \norm{\Lambda}_{op} \norm{U-V}_{HS}.
\end{equation}
Now writing $A = U\Lambda U^*$ and $A' = V\Lambda V^*$, it follows
from the Cauchy--Schwarz inequality, \eqref{E:commutator-difference},
\eqref{E:commutator-bound}, and \eqref{E:U-V} that
\begin{equation*}
  \begin{split}
    \abs{f(U) - f(V)} 
    & = \abs{\tr \left([A,B] \bigl([A, C] - [A', C]\bigr)^*\right)
      + \tr \left(\bigl([A,B] - [A',B]\bigr)[A', C]^*\right)} \\
    & \le \norm{[A,B]}_{HS} \norm{[A,C] - [A', C]}_{HS} + \norm{[A',C]}_{HS} \norm{[A,B] - [A', B]}_{HS} \\
    & \le 16 \norm{B}_{op} \norm{C}_{op} \norm{\Lambda}_{op}^2
    \norm{U-V}_{HS} \\
    & \le 16 \norm{B}_{op} \norm{C}_{op} \norm{\Lambda}_{op}^2 d_g(U,V), \\
  \end{split}
\end{equation*}
where the last estimate follows since $\norm{U-V}_{HS} \le d_g(U,V)$ (see e.g.\ \cite[Lemma
1.3]{EM-book}).

The Poincar\'e inequality now implies that
\[
  \var \tr \bigl([A,B_i] [A,B_j]^*\bigr) \le \frac{16^2}{n}
  \norm{B_i}_{op}^2 \norm{B_j}_{op}^2 \norm{\Lambda}_{op}^4,
\]
and so by \eqref{E:F-variance},
\begin{equation}
  \label{E:F-norm}
  \E \norm{F}_{HS} \le \frac{32 \norm{\Lambda}_{op}^2}{\sqrt{n}(n^2 -
    1)} \sum_{i=1}^d \norm{B_i}_{op}^2.
\end{equation}
The theorem now follows directly from Theorem \ref{T:ChMe}.
\end{proof}

The rather soft approach to the bound in
\eqref{E:F-norm} used above, based on Poincar\'e inequalities, yields
an optimal bound in general.  However, it is in principle possible (though
unwieldy) to compute the variances appearing in \eqref{E:F-variance}
explicitly using the Weingarten calculus, and obtain a better result
in certain cases.  We discuss this point further following the proof
of Theorem \ref{T:R1-T0-C} below.

The proof of Theorem \ref{T:DL-T0-R} is a straightforward modification
of the proof above. The required mixed moments of entries of random
orthogonal matrices can also be found in \cite{ChMe}.  For the
Poincar\'e inequality estimate, one must condition on the coset of
$\SOrthogonal{n}$ within $\Orthogonal{n}$; for similar arguments, see,
e.g., \cite{MeMe-concentration}.  The required spectral gap estimate
on $\SOrthogonal{n}$ can be found in \cite{Saloff-Coste}.

\begin{proof}[Proof of Corollary \ref{T:DL-B-C}]
As in the statement of the corollary, let $\Lambda\in M_n(\R)$ be
diagonal and let $B_1, \dots, B_d\in M_n^{sa}(\C)$.  The random matrix
$A$ is defined by $A=U\Lambda U^*$, where $U$ is Haar-distributed in
$\Unitary{n}$, and for each $j$, $X_j=\tr(AB_j)$.

Recall that for
any $B\in M_n(\C)$, we denote by $\widetilde{B}$ the traceless
recentering of $B$:
\[\widetilde{B}=B-\frac{1}{n}(\tr(B))I_n.\]
Note that
\(\widetilde{A} = A - \frac{1}{n} (\tr
  \Lambda) I_n  = U \widetilde{\Lambda} U^*. \)
Also, for each $j$,

\[X_j = \tr A B_j = \tr \bigl(\widetilde{A} +
  \frac{1}{n} (\tr \Lambda) I_n \bigr) \bigl(\widetilde{B}_j +
  \frac{1}{n} (\tr B_j)I_n\bigr) = \tr \widetilde{A}
  \widetilde{B}_j + \frac{1}{n} (\tr \Lambda) (\tr B_j).\]

Recall that the matrix
 $\Sigma$ is
  given by
  \[
    \Sigma_{ij} = \tr \widetilde{B}_i \widetilde{B}_j = \tr B_i B_j -
    \frac{1}{n} \tr B_i \tr B_j;
  \]
  it is nonnegative definite, and positive definite if the $\widetilde{B}_i$ are linearly independent.

If we define $C_i = \sum_{j=1}^d [\Sigma^{-1/2}]_{ij}
\widetilde{B}_j$, then
  \[
    \tr C_i C_j = \sum_{\ell, m=1} [\Sigma^{-1/2}]_{i \ell} \bigl(\tr
    \widetilde{B}_\ell \widetilde{B}_m \bigr) [\Sigma^{-1/2}]_{mj}
    = [\Sigma^{-1/2} \Sigma \Sigma^{-1/2}]_{ij} = \delta_{ij}
  \]
  and
  \[
    \sum_{j=1}^d [\Sigma^{1/2}]_{ij} C_j = \sum_{j, \ell=1}^d
    [\Sigma^{1/2}]_{ij} [\Sigma^{-1/2}]_{j\ell} \widetilde{B}_\ell = \widetilde{B}_i. 
  \]

Now let $W_j = \tr \widetilde{A} C_j$ and
  $v_j = \frac{1}{n} (\tr \Lambda) (\tr B_j)$, so that
$X = \Sigma^{1/2} W + v$.

Theorem \ref{T:DL-T0-C} applied to $W$ gives that
\[
  W_1\left(\frac{\sqrt{n^2-1}}{\bnorm{\widetilde{\Lambda}}_{HS}}W,g\right) \le
  8d\sqrt{n}\frac{\bnorm{\widetilde{\Lambda}}_{op}^2}{\bnorm{\widetilde{\Lambda}}_{HS}^2},
\]
using the trivial estimate
$\srank{C_j} \ge 1$.  Note that for a matrix $M$, multiplication by
$M$ is $\norm{M}_{op}$-Lipschitz, and so
\[
W_1(MX,MY) = \sup_{\abs{f}_L\le 1} \abs{\E f(MX)-\E f(MY)} \le
\norm{M}_{op}W_1(X,Y).
\]
It thus follows from above that 
\[
  W_1\left(X,\frac{
        \bnorm{\widetilde{\Lambda}}_{HS}}{\sqrt{ n^2-1}}
    \Sigma^{1/2} g + v\right) \le
  \frac{8d\norm{\Sigma^{1/2}}_{op} \bnorm{\widetilde{\Lambda}}_{op}^2 
  }{\bnorm{\widetilde{\Lambda}}_{HS} \sqrt{n-1}}. 
  \qedhere
\]  
\end{proof}

For the proof of Theorem \ref{T:R1-T0-C}, we will make use of the fact
that, when restricted to the sphere, traceless Hermitian matrices
acting as bilinear forms on Euclidean space define eigenfunctions of
the Laplacian.  This fact is used in conjunction with the following
theorem from \cite{EM-Stein-multi}.  We note that this theorem is also proved via Theorem \ref{T:ChMe}, so that ultimately, the proofs of Theorems \ref{T:DL-T0-C} and \ref{T:R1-T0-C} rely on the same underlying ideas.

\begin{thm}
  \label{T:eigenfunctions}
  Let $\Omega$ be a compact Riemannian manifold.  Let $f_1,\ldots,f_d$ be
  eigenfunctions of the Laplace-Beltrami operator on $\Omega$, with
  eigenvalues $-\mu_1,\ldots,-\mu_d$, and suppose that the $f_i$ are
  orthonormal in $L_2(\Omega)$ (with the volume measure normalized to have
  total mass 1).  If $Y$ is distributed uniformly (i.e., according to
  volume measure) on $\Omega$ and $X=(f_1(Y),\ldots,f_d(Y))$, then 
  \[
    W_1(X,g)\le \left(\max_{1\le i\le
        d}\frac{1}{\mu_i}\right)\E\sqrt{\sum_{i,j=1}^d\left[\inprod{\nabla
          f_i(Y)}{\nabla f_j(Y)}-\E\inprod{\nabla f_i(Y)}{\nabla
          f_j(Y)}\right]^2}.
  \]
\end{thm}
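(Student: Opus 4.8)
The plan is to prove this via the generator (infinitesimal) form of Stein's method, exploiting that the heat flow on $\Omega$ is reversible with respect to the uniform measure and that the $f_i$ are its eigenfunctions; this is really the differential version of the exchangeable-pair argument behind Theorem~\ref{T:ChMe}. Write $F=(f_1,\dots,f_d)\colon\Omega\to\R^d$, so $X=F(Y)$, let $\Delta$ denote the Laplace--Beltrami operator (so $\Delta f_i=-\mu_i f_i$), and observe that Green's identity on the closed manifold $\Omega$ gives, for the uniform $Y$,
\[
\E\inprod{\nabla f_i}{\nabla f_j}=-\E\bigl[(\Delta f_i)f_j\bigr]=\mu_i\,\E[f_if_j]=\mu_i\delta_{ij}.
\]
Hence the matrix $R^\circ:=\bigl(\inprod{\nabla f_i}{\nabla f_j}-\E\inprod{\nabla f_i}{\nabla f_j}\bigr)_{i,j}$ is exactly the centering of the random matrix whose Hilbert--Schmidt norm appears in the statement, and the goal becomes $W_1(X,g)\le\bigl(\max_i\mu_i^{-1}\bigr)\E\norm{R^\circ}_{HS}$.

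By Kantorovich--Rubinstein duality it suffices to bound $\abs{\E\varphi(X)-\E\varphi(g)}$ for an arbitrary $1$-Lipschitz $\varphi\colon\R^d\to\R$. Let $h$ be the solution of the multivariate Gaussian Stein equation $\Delta_{\R^d}h(x)-\inprod{x}{\nabla h(x)}=\varphi(x)-\E\varphi(g)$ produced by the Ornstein--Uhlenbeck semigroup; the standard smoothing (Stein-factor) estimate for Lipschitz data gives $h\in C^2$ with $\sup_x\norm{\operatorname{Hess}h(x)}_{HS}\le1$, and then $\E\varphi(X)-\E\varphi(g)=\E\bigl[\Delta h(X)-\inprod{X}{\nabla h(X)}\bigr]$, so everything reduces to estimating $\E\bigl[\inprod{X}{\nabla h(X)}-\Delta h(X)\bigr]$. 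Here is the key computation: using $f_i=-\mu_i^{-1}\Delta f_i$, Green's identity again, and the chain rule $\nabla\bigl((\partial_i h)\circ F\bigr)=\sum_j\bigl((\partial_{ij}h)\circ F\bigr)\nabla f_j$,
\[
\E\bigl[X_i\,\partial_i h(X)\bigr]=\frac1{\mu_i}\,\E\inprod{\nabla f_i}{\nabla\bigl((\partial_i h)\circ F\bigr)}=\frac1{\mu_i}\sum_{j=1}^d\E\Bigl[\bigl((\partial_{ij}h)\circ F\bigr)\inprod{\nabla f_i}{\nabla f_j}\Bigr].
\]
Summing over $i$ and substituting $\inprod{\nabla f_i}{\nabla f_j}=\mu_i\delta_{ij}+R^\circ_{ij}$, the diagonal part contributes exactly $\sum_i\E[\partial_{ii}h(X)]=\E[\Delta h(X)]$, leaving
\[
\E\bigl[\inprod{X}{\nabla h(X)}-\Delta h(X)\bigr]=\sum_{i,j=1}^d\frac1{\mu_i}\,\E\Bigl[\bigl((\partial_{ij}h)\circ F\bigr)\,R^\circ_{ij}\Bigr].
\]
Now I apply Cauchy--Schwarz: for fixed $x$, pulling the positive weights $\mu_i^{-1}$ out of the (weighted) Hilbert--Schmidt pairing, $\bigl|\sum_{i,j}\mu_i^{-1}(\partial_{ij}h)(x)\,R^\circ_{ij}\bigr|\le\bigl(\max_i\mu_i^{-1}\bigr)\norm{\operatorname{Hess}h(x)}_{HS}\norm{R^\circ}_{HS}$, so with the Hessian bound
\[
\abs{\E\varphi(X)-\E\varphi(g)}\le\Bigl(\max_{1\le i\le d}\frac1{\mu_i}\Bigr)\E\norm{R^\circ}_{HS}=\Bigl(\max_{1\le i\le d}\frac1{\mu_i}\Bigr)\E\sqrt{\sum_{i,j=1}^d\Bigl(\inprod{\nabla f_i}{\nabla f_j}-\E\inprod{\nabla f_i}{\nabla f_j}\Bigr)^2},
\]
and taking the supremum over $1$-Lipschitz $\varphi$ gives the theorem.

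The main obstacle is the regularity bookkeeping around the two integration-by-parts steps: one must know $h$ is genuinely $C^2$ with bounded Hessian and that the $f_i$ (hence $(\partial_i h)\circ F$ and $(\partial_{ij}h)\circ F$) are smooth enough that $\Delta$ acts self-adjointly against them. Since $\Omega$ is closed its Laplacian eigenfunctions are $C^\infty$, and the Ornstein--Uhlenbeck smoothing estimate supplies the regularity of $h$, so this is routine; the one genuinely delicate point is checking that the constant in $\sup_x\norm{\operatorname{Hess}h(x)}_{HS}\le1$ is really $1$ for $1$-Lipschitz $\varphi$, which is what yields the clean prefactor $\max_i\mu_i^{-1}$ with no spurious absolute constant. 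For completeness I would also note the alternative route: take the exchangeable pair $(X,X_\varepsilon)=\bigl(F(Y),F(Y_\varepsilon)\bigr)$ with $Y_\varepsilon$ a Brownian motion started from $Y$ and run for time $\varepsilon$ (exchangeable by reversibility); the heat semigroup gives $\varepsilon^{-1}\E[X_\varepsilon-X\mid X]\to-\diag(\mu_1,\dots,\mu_d)X$ and $\varepsilon^{-1}\E\bigl[(X_\varepsilon-X)(X_\varepsilon-X)^T\mid X\bigr]\to2\bigl(\inprod{\nabla f_i}{\nabla f_j}\bigr)_{i,j}$, so the hypotheses hold with the fluctuation matrix $F$ of Theorem~\ref{T:ChMe} a constant multiple of $R^\circ$; but the distinct eigenvalues $\mu_i$ force one to use the matrix-valued refinement of Theorem~\ref{T:ChMe} rather than the scalar-$\alpha$ version stated here, which is precisely why this lemma is quoted from \cite{EM-Stein-multi}. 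I would therefore present the self-contained semigroup argument above.
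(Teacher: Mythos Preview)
The paper does not prove Theorem~\ref{T:eigenfunctions}; it is quoted without proof from \cite{EM-Stein-multi}. Your argument is correct and is essentially the one given in that reference. The ``alternative route'' you sketch at the end---the Brownian-motion exchangeable pair $(Y,Y_\varepsilon)$ together with the matrix-$\Lambda$ (rather than scalar-$\alpha$) refinement of Theorem~\ref{T:ChMe}---is precisely how the result is obtained in \cite{EM-Stein-multi}, and your direct Stein-equation computation is an equivalent unpacking of that machinery. Your diagnosis of why the scalar-$\alpha$ Theorem~\ref{T:ChMe} as stated in this paper does not suffice (distinct $\mu_i$) is exactly the reason Theorem~\ref{T:eigenfunctions} is quoted separately.

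On the one point you flag as delicate: the bound $\sup_x\norm{\operatorname{Hess}h(x)}_{HS}\le 1$ for $1$-Lipschitz $\varphi$ does hold with constant exactly $1$. Writing the Ornstein--Uhlenbeck solution via the Mehler formula and integrating one derivative by parts against the Gaussian gives
\[
\partial_{ij}h(x)=-\int_0^\infty \frac{e^{-2t}}{\sqrt{1-e^{-2t}}}\,\E\bigl[\partial_j\varphi\bigl(e^{-t}x+\sqrt{1-e^{-2t}}Z\bigr)\,Z_i\bigr]\,dt;
\]
for each fixed $t$ the matrix $M_t=\E[\nabla\varphi(\cdots)Z^T]$ satisfies $\norm{M_t}_{HS}\le 1$ (take the sup over unit $v$ of $\E[\partial_j\varphi\,\inprod{Z}{v}]$ and use Cauchy--Schwarz together with $\norm{\nabla\varphi}\le 1$), and $\int_0^\infty e^{-2t}(1-e^{-2t})^{-1/2}\,dt=1$. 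So the clean prefactor $\max_i\mu_i^{-1}$ is justified. A routine mollification handles $\varphi$ that are merely Lipschitz rather than $C^1$.
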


Making use of the theorem involves integrating various polynomials
over the complex sphere.  The proof of the following lemma is a standard exercise; see, e.g.,
Section 2.7 of \cite{Fol}.
\begin{lemma}\label{T:spherical-integral-formula-1}
Let $Z=(Z_1,\ldots,Z_n)$ be uniformly distributed on the complex unit
sphere $\{z\in\C^n:\sum_{j=1}^n\abs{z_j}^2=1\}$.  Let 
$\alpha_1,\ldots,\alpha_n\in\R_+$, and define
$\beta_j:=\frac{\alpha_j}{2}+1$ and $\beta=\sum_{j=1}^n\beta_j$.  Then
\[\E\left[\abs{Z_1}^{\alpha_1}\cdots\abs{Z_n}^{\alpha_n}\right]=\frac{\Gamma(\beta_1)\cdots\Gamma(\beta_n)\Gamma(n)}{\Gamma(\beta)}.\]
\end{lemma}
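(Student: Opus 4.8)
The plan is to realize the uniform measure on the complex unit sphere via a Gaussian projection. First I would let $W=(W_1,\ldots,W_n)$ be a standard complex Gaussian vector, i.e.\ $W_j=\frac{1}{\sqrt2}(G_j+iH_j)$ with $G_1,H_1,\ldots,G_n,H_n$ i.i.d.\ real standard Gaussian, normalized so that $\E\abs{W_j}^2=1$. Since the density of $W$ with respect to Lebesgue measure on $\C^n\cong\R^{2n}$ depends only on $\norm{W}$, the direction $Z:=W/\norm{W}$ is uniformly distributed on the complex unit sphere and is independent of the radius $\norm{W}$. Writing $\alpha:=\alpha_1+\cdots+\alpha_n$, this gives the factorization
\[
\E\bigl[\abs{W_1}^{\alpha_1}\cdots\abs{W_n}^{\alpha_n}\bigr]
=\E\bigl[\norm{W}^{\alpha}\bigr]\,\E\bigl[\abs{Z_1}^{\alpha_1}\cdots\abs{Z_n}^{\alpha_n}\bigr],
\]
so it remains to evaluate the two Gaussian moments on the left-hand side and solve for the sphere moment.

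Next I would compute those moments. Each $\abs{W_j}^2=\frac12(G_j^2+H_j^2)$ is an exponential random variable of mean $1$, i.e.\ a $\mathrm{Gamma}(1,1)$ variable, so $\E[(\abs{W_j}^2)^t]=\Gamma(1+t)$ for $t>-1$; taking $t=\alpha_j/2$ and using independence of the $W_j$ yields
\[
\E\bigl[\abs{W_1}^{\alpha_1}\cdots\abs{W_n}^{\alpha_n}\bigr]=\prod_{j=1}^n\Gamma\Bigl(1+\tfrac{\alpha_j}{2}\Bigr)=\prod_{j=1}^n\Gamma(\beta_j).
\]
Likewise $\norm{W}^2=\sum_{j=1}^n\abs{W_j}^2$ is a sum of $n$ independent $\mathrm{Gamma}(1,1)$ variables, hence $\mathrm{Gamma}(n,1)$, so $\E[(\norm{W}^2)^t]=\Gamma(n+t)/\Gamma(n)$ for $t>-n$; taking $t=\alpha/2$ and noting that $\alpha/2=\sum_{j=1}^n(\beta_j-1)=\beta-n$ gives $\E[\norm{W}^{\alpha}]=\Gamma(\beta)/\Gamma(n)$. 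Dividing, I obtain
\[
\E\bigl[\abs{Z_1}^{\alpha_1}\cdots\abs{Z_n}^{\alpha_n}\bigr]=\frac{\Gamma(\beta_1)\cdots\Gamma(\beta_n)\,\Gamma(n)}{\Gamma(\beta)},
\]
which is exactly the asserted formula.

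There is no real obstacle here; the only points requiring slight care are the independence of $Z$ and $\norm{W}$ and the bookkeeping identity $\alpha/2=\beta-n$. If one preferred to avoid the Gaussian representation, an alternative is a direct induction on $n$: disintegrate the uniform measure on $S^{2n-1}\subset\C^n$ over $\abs{Z_n}^2$, which has a Beta-type density, and combine the identity $\int_0^1 r^{a-1}(1-r)^{b-1}\,dr=\Gamma(a)\Gamma(b)/\Gamma(a+b)$ with the inductive hypothesis on $\C^{n-1}$. The Gaussian route above is shorter, and this is in any case the standard exercise referenced in the statement.
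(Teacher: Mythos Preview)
Your proof is correct. The paper does not actually prove this lemma; it simply declares it ``a standard exercise'' and refers to Folland for details. Your Gaussian-projection argument is exactly one of the standard routes for this exercise, and the bookkeeping (the exponential law of $\abs{W_j}^2$, the $\mathrm{Gamma}(n,1)$ law of $\norm{W}^2$, and the identity $\alpha/2=\beta-n$) is all in order.
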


The following compact expression for the mixed moments puts Lemma
\ref{T:spherical-integral-formula-1} into a form better suited to our purposes.
\begin{prop}\label{T:spherical-integral-formula-2}
Let $Z=(Z_1,\ldots,Z_n)$ be uniformly distributed on the complex unit
sphere.  The only non-zero mixed moments of the entries of $Z$ and their
conjugates are those in which each entry appears the same time as its
conjugate; these moments are given by the following formula:
\[
\E\big[Z_{i_1}\ldots
Z_{i_k}\overline{Z}_{j_1}\ldots\overline{Z}_{j_k}\big]=\frac{1}{n(n+1)\cdots(n+k-1)}\sum_{\pi\in
  S_k}\delta_{i_1j_{\pi(1)}}\cdots\delta_{i_kj_{\pi(k)}}.
\]
\end{prop}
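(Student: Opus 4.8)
The statement to prove is Proposition~\ref{T:spherical-integral-formula-2}, the compact formula for the mixed moments of the coordinates of a uniform random vector $Z$ on the complex unit sphere.  My plan is to deduce it from Lemma~\ref{T:spherical-integral-formula-1} together with the unitary invariance of the distribution of $Z$.  First I would observe that the distribution of $Z$ is invariant under multiplication of any coordinate $Z_j$ by a unit complex scalar $e^{i\theta_j}$; averaging $Z_{i_1}\cdots Z_{i_k}\overline{Z}_{j_1}\cdots\overline{Z}_{j_k}$ against independent uniform phases $\theta_1,\dots,\theta_n$ forces the moment to vanish unless, for every index value $m$, the number of times $m$ occurs among $i_1,\dots,i_k$ equals the number of times it occurs among $j_1,\dots,j_k$.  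This proves the first assertion and reduces the problem to moments of the form $\E\bigl[\,\prod_m \abs{Z_m}^{2c_m}\,\bigr]$ with $\sum_m c_m = k$, which is exactly what Lemma~\ref{T:spherical-integral-formula-1} computes: with $\alpha_m = 2c_m$ we get $\beta_m = c_m+1$, $\beta = k+n$, and hence
\[
\E\Bigl[\prod_{m=1}^n \abs{Z_m}^{2c_m}\Bigr] = \frac{\Gamma(n)\prod_{m=1}^n \Gamma(c_m+1)}{\Gamma(n+k)} = \frac{(n-1)!\,\prod_m c_m!}{(n+k-1)!} = \frac{\prod_m c_m!}{n(n+1)\cdots(n+k-1)}.
\]

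**Matching the combinatorial factor.**  The second step is purely combinatorial: I must check that the right-hand side of the proposition, namely $\frac{1}{n(n+1)\cdots(n+k-1)}\sum_{\pi\in S_k}\prod_{\ell=1}^k \delta_{i_\ell j_{\pi(\ell)}}$, equals the expression just obtained whenever the multiset $\{i_1,\dots,i_k\}$ equals the multiset $\{j_1,\dots,j_k\}$ (and is $0$ otherwise, which is clear since then no permutation $\pi$ can match all the Kronecker deltas).  So I need the identity
\[
\sum_{\pi\in S_k}\prod_{\ell=1}^k \delta_{i_\ell j_{\pi(\ell)}} = \prod_{m=1}^n c_m!,
\]
where $c_m$ is the common multiplicity of the value $m$.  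This counts permutations $\pi$ of $\{1,\dots,k\}$ that send, for each $m$, the block of positions $\ell$ with $i_\ell = m$ bijectively onto the block of positions with $j_\ell = m$; since the blocks have matching sizes $c_m$, the number of such $\pi$ is $\prod_m c_m!$.  Combining this with the displayed spherical integral gives the claimed formula.

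**Expected obstacle.**  None of the steps is deep; the only point requiring a little care is the phase-averaging argument that kills the non-balanced moments and, simultaneously, justifies restricting attention to the absolute-value moments — one should be explicit that multiplying the $j$-th coordinate by $e^{i\theta_j}$ leaves $\abs{Z_j}$ unchanged while multiplying a balanced monomial by $1$, so the averaging identity really does reduce the general mixed moment to the case covered by Lemma~\ref{T:spherical-integral-formula-1}.  The combinatorial bookkeeping of the $S_k$ sum is routine but is the place where an off-by-a-factorial slip is easiest to make, so I would write out the block-matching count carefully.  Everything else is a direct substitution into the Gamma-function formula.
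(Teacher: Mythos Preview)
Your proposal is correct and follows essentially the same approach as the paper: use phase invariance to kill the unbalanced moments, reduce the balanced case to the absolute-moment formula of Lemma~\ref{T:spherical-integral-formula-1}, and then identify $\prod_m c_m!$ with the number of permutations $\pi\in S_k$ making all the Kronecker deltas equal to $1$. The paper's proof is slightly terser but matches yours step for step.
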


\begin{proof}
If any entry does not appear the same number of times as its
conjugate, then by the invariance of Haar measure under the
multiplication of a single coordinate by any unit modulus complex
number, the expectation must vanish.  It follows, then, that the
expectation in the statement of the Proposition must in fact be a
mixed absolute moment as in the statement of Lemma
\ref{T:spherical-integral-formula-1}, with $\beta=k+n$, and by the
functional equation $\Gamma(x+1)=x\Gamma(x)$, it follows that
$\frac{\Gamma(n)}{\Gamma(\beta)}=\frac{1}{(n+k-1)\cdots n}$.

Next, observe that for a given $j$,
$\Gamma(\beta_j)=\left(\frac{\alpha_j}{2}\right)!,$ (since $\alpha_j$
is necessarily even), and thus $\Gamma(\beta_j)$ is exactly the number of
matchings of the $\frac{\alpha_j}{2}$ of the $Z_{i_\ell}$ with
$i_\ell=j$ with those $\overline{Z}_{j_\ell}$ with $j_\ell=j$.  It
follows that $\Gamma(\beta_1)\ldots\Gamma(\beta_n)$ is the number of
matchings of the $Z_{i_\ell}$ with the $Z_{j_\ell}$ so that each $Z_j$
is always matched with $\overline{Z}_j$.  This is exactly the
expression given by the sum over permutations formula in the statement
of the Proposition. 
\end{proof}

\begin{proof}[Proof of Theorem \ref{T:R1-T0-C}]
  First observe that
  $\norm{B_j}_4^4 \le \norm{B_j}_{op}^2 \norm{B_j}_{HS}^2 =
  \norm{B_j}_{op}^2$, and so the second bound of the Theorem follows
  immediately from the first.

  Turning to the proof of the first bound, first note that for
  $z\in\C^n$, $\inprod{B_jz}{z}$ is necessarily real, since $B_j$ is
  Hermitian. For a Hermitian matrix $B$, write $B=B_r+iB_i$ with $B_r$
  real and symmetric, and $B_i$ real and anti-symmetric.  Then letting
  $z=x+iy$,
\[
\inprod{Bz}{z} = \inprod{B_rx}{x} + \inprod{B_ry}{y} -
\inprod{B_iy}{x} + \inprod{B_ix}{y}.
\]
That is, $\inprod{Bz}{z}$ viewed as a function on $\sphere{2n-1}$
(associating $z=x+iy$ with $(x,y)\in\R^{2n}$)
corresponds to the bilinear form with symmetric traceless matrix

\begin{equation}\label{E:real-matrix}
  \tilde{B}=\begin{bmatrix}B_r&-B_i\\B_i&B_r\end{bmatrix},\end{equation}
which is an
eigenfunction of the spherical Laplacian with eigenvalue $-4n$.  (See
\cite{EM-eigenfunctions} for details on this statement and facts about gradients
needed below).

While it is necessary to view $\inprod{Bz}{z}$ as a function on
$\sphere{2n-1}$ in order to apply Theorem \ref{T:eigenfunctions},
evaluating the integrals needed is generally simpler in the complex
setting; this is justified since the push-forward of uniform measure
on the complex unit sphere to $\sphere{2n-1}$ is again the uniform
measure.

By Proposition \ref{T:spherical-integral-formula-2}, for $B$ Hermitian
and traceless,
\[\E\inprod{BZ}{Z}=\sum_{j,k=1}^nb_{jk}\E\left[Z_k\overline{Z}_j\right]=\frac{1}{n}\tr(B)=0,\]
and for $B,C$ Hermitian and traceless,
\begin{align*}\E\left(\inprod{BZ}{Z}\inprod{CZ}{Z}\right)&=\sum_{j,k,\ell,m}b_{jk}c_{\ell
  m}\E\left[Z_kZ_m\overline{Z}_j\overline{Z}_\ell\right]\\&=\frac{1}{n(n+1)}\left[\sum_{j,\ell}\left(b_{jj}c_{\ell\ell}+b_{j\ell}c_{\ell j}\right)\right]=\frac{\tr(BC)}{n(n+1)},\end{align*}
and so if $f_j(z)=\sqrt{n(n+1)}\inprod{B_iz}{z}$, then
$f_1(Z),\ldots,f_d(Z)$ are orthonormal eigenfunctions of the Laplacian
on $\sphere{2n-1}$.

Now, the gradient $\nabla f_j$ appearing in Theorem
\ref{T:eigenfunctions} is the gradient defined by the Riemannian
metric; in this case, it is the spherical gradient, which is given by
\[\nabla_{S^{2n-1}}f(z)=\nabla_{\R^{2n}}f(z)-\inprod{z}{\nabla_{\R^{2n}}f(z)}z.\]
(Abusing notation, we are treating $z$ as a vector in
$\R^{2n}$: $(z_1,\ldots,z_{2n})=(x_1,\ldots,x_n,y_1,\ldots,y_n)$.)
For $f_j$ defined as above and $\tilde{B}$ as in
\eqref{E:real-matrix},
\[\nabla_{\sphere{2n-1}}f_j(z)=\sqrt{n(n+1)}\left[2\tilde{B}_jz-2\inprod{\tilde{B}_jz}{z}z\right]=2\sqrt{n(n+1)}\tilde{B}_jz-2zf_j(z),\]
and so 
\[\inprod{\nabla_{\sphere{2n-1}}f_j(z)}{\nabla_{\sphere{2n-1}}f_k(z)}=4n(n+1)\inprod{\tilde{B}_jz}{\tilde{B}_kz}-4f_j(z)f_k(z)\]

Now, 
\(\tilde{B}_jz=(\Re(B_jz),\Im(B_jz)),\)
and so 
\[\inprod{\tilde{B}_jz}{\tilde{B}_kz}=\inprod{\Re(B_jz)}{\Re(B_kz)}+\inprod{\Im(B_jz)}{\Im(B_kz)}=\Re\left(\inprod{B_jz}{B_kz}\right).\]
That is,
\begin{equation}\label{E:grad-dot-product}\inprod{\nabla_{\sphere{2n-1}}f_j(z)}{\nabla_{\sphere{2n-1}}f_k(z)}=4n(n+1)
\Re\left(\inprod{B_jz}{B_kz}\right) -4f_j(z)f_k(z).\end{equation}
Taking the expectation using Proposition
\ref{T:spherical-integral-formula-2},
\[\E\inprod{B_jZ}{B_kZ}=\sum_{\ell,p,q}[B_j]_{\ell
  p}\overline{[B_k]_{\ell
    q}}\E\left[Z_p\overline{Z}_q\right]=\frac{1}{n}\tr(B_jB_k^*)=\frac{\delta_{jk}}{n},\]
and by the orthonormality of the $f_j$, this means that
\[\E\inprod{\nabla_{\sphere{2n-1}}f_j(z)}{\nabla_{\sphere{2n-1}}f_k(z)}=4n\delta_{jk}.\]

We now estimate the variance of this expression.  
For notational convenience, write $B_j=A$ and $B_k=C$.  Then
\begin{align*}\E\inprod{Az}{Cz}^2
  &=\sum_{\ell,m,p,q,r,s}a_{\ell
  p}a_{m r}\overline{c_{\ell q}}\overline{c_{m
    s}}\E\left[Z_pZ_r\overline{Z}_q\overline{Z}_s\right]\\
  &=\frac{1}{n(n+1)}\sum_{\ell,m,p,r}\left(a_{\ell
  p}a_{m r}\overline{c_{\ell p}}\overline{c_{m r}}+a_{\ell
  p}a_{m r}\overline{c_{\ell r}}\overline{c_{m p}}\right) \\
  & =\frac{1}{n(n+1)}\left(\tr(AC^*)^2+\tr\left((AC^*)^2\right)\right)
\end{align*}
and 
\begin{align*}
\E\abs{\inprod{Az}{Cz}}^2
&=\sum_{\ell,m,p,q,r,s}a_{\ell
  p}\overline{a_{m r}}\overline{c_{\ell q}}c_{m
  s}\E\left[Z_p\overline{Z}_r\overline{Z}_qZ_s\right]\\
&=\frac{1}{n(n+1)}\sum_{\ell,m,p,s}\left(a_{\ell
  p}\overline{a_{m p}}\overline{c_{\ell s}}c_{m s}+a_{\ell
  p}\overline{a_{m s}}\overline{c_{\ell p}}c_{m s}\right)\\
&=\frac{1}{n(n+1)}\left(\tr(AA^*CC^*)+\tr(AC^*)^2\right),\end{align*}
and so 
\begin{align*}16n^2(n+1)^2\E\left[\Re\left(\inprod{B_jz}{B_kz}\right)\right]^2&=8n^2(n+1)^2\E\left(\Re\left[\inprod{B_jz}{B_kz}^2+\abs{\inprod{B_jz}{B_kz}}^2\right]\right)\\&=8n(n+1)\Re\left[2\delta_{jk}+\tr\left((B_jB_k)^2\right)+\tr(B_j^2B_k^2)\right].\end{align*}
By H\"older's inequality for unitarily invariant norms (see \cite[Corollary IV.2.6]{Bhatia}) and the Cauchy--Schwarz inequality,
\[
\tr\left((B_jB_k)^2\right)\le\norm{B_jB_k}_{HS}\norm{B_kB_j}_{HS}\le\norm{B_j^2}_{HS}\norm{B_k^2}_{HS}=\norm{B_j}_4^{2}\norm{B_k}_4^{2}.
\]
Similarly,
\[
\tr(B_j^2B_k^2)\le\norm{B_j^2}_{HS}\norm{B_k^2}_{HS}=\norm{B_j}_4^{2}\norm{B_k}_4^{2},
\]
and so
\begin{align*}
  16n^2(n+1)^2&\E\left[\Re\left(\inprod{B_jz}{B_kz}\right)\right]^2\le
                16n(n+1)\left[\delta_{jk}+
                \norm{B_j}_4^{2}\norm{B_k}_4^{2}\right].
\end{align*}

Next, regarding $\pi\in S_3$ as a bijection from $\{p,r,t\}$ to
itself, using Proposition \ref{T:spherical-integral-formula-2} and
assuming $B$ and $C$ are Hermitian and traceless,
\begin{align*}
\E\left[\inprod{Bz}{Cz}\inprod{Bz}{z}\inprod{Cz}{z}\right]&=\sum_{\ell,p,q,r,s,t,u}b_{\ell
  p}b_{rs}\overline{c_{\ell  q}}c_{tu}\E\left[Z_{p}Z_rZ_t\overline{Z_q}\overline{Z_s}\overline{Z_u}\right]\\&=\frac{1}{n(n+1)(n+2)}\sum_{\ell,p,r,t}\sum_{\pi\in S^3}b_{\ell p}b_{r\pi(p)}\overline{c_{\ell\pi(r)}}c_{t\pi(t)}\\&=\frac{1}{n(n+1)(n+2)}\left[\tr(BC^*)^2+\tr(BC^TB^TC)+\tr(BB^TC^TC)\right],
\end{align*}
and so 
\begin{align*}
32n(n+1)\E\left[\Re\left(\inprod{B_jz}{B_kz}\right)f_j(Z)f_k(Z)\right]&=\frac{32n(n+1)}{n+2}\Re\left[\delta_{jk}+\tr(B_jB_k^TB_j^TB_k)+\tr(B_jB_j^TB_kB_k^T)\right].
\end{align*}
Bounding the traces as above, we have 
\[\abs{32n(n+1)\E\left[\Re\left(\inprod{B_jz}{B_kz}\right)f_j(Z)f_k(Z)\right]}\le 32 n[\delta_{jk}+2\norm{B_j}_4^{2}\norm{B_k}_4^{2}].\]

Lastly, letting $B$ and $C$ be traceless Hermitian matrices and using
Proposition \ref{T:spherical-integral-formula-2} as above,
\begin{align*}
\E&\left[\inprod{BZ}{Z}^2\inprod{CZ}{Z}^2\right]\\&\qquad=\sum_{\ell,m,p,q,r,s,t,u}b_{\ell  m}b_{p q}c_{r  s}c_{tu}\E\left[Z_mZ_qZ_sZ_u\overline{Z_\ell  Z_pZ_rZ_t}\right]\\&\qquad=\frac{1}{n(n+1)(n+2)(n+3)}\sum_{\ell,p,r,t}\sum_{\pi\in S_4}b_{\ell   \pi(\ell)}b_{p \pi(p)}c_{r  \pi(r)}c_{t\pi(t)}\\&\qquad=\frac{1}{n(n+1)(n+2)(n+3)}\left[\tr(B^2)\tr(C^2)+4\tr(B^2C^2)+2\tr(BC)^2+2\tr\left((BC)^2\right)\right].
\end{align*}
It follows that
\[16\E
f_j^2(Z)f_k^2(Z)=\frac{16n(n+1)}{(n+2)(n+3)}\left[1+2\delta_{jk}+4\tr(B_j^2B_k^2)+2\tr\left((B_jB_k)^2\right)\right],\]
and thus
\[\abs{16\E
f_j^2(Z)f_k^2(Z)}\le 16\left[1+2\delta_{jk}+6\norm{B_j}_4^{2}\norm{B_k}_4^{2}\right]\]
All together then, there is an absolute constant $C$ such that
\[
\sum_{j,k=1}^d\var\inprod{\nabla_{\sphere{2n-1}}f_j(z)}{\nabla_{\sphere{2n-1}}f_k(z)}\le
C\left[n^2\left(\sum_{j=1}^d\norm{B_j}_4^{2}\right)^2+nd\right].
\]
For each $j$,
$\norm{B_j}_4^2 \ge n^{-1/2} \norm{B_j}_{HS} = n^{-1/2}$, so the
second term in the last estimate above is bounded by the first.  This
completes the proof.
\end{proof}

The improvements in Theorem \ref{T:R1-T0-C}, relative to applying
Theorem \ref{T:DL-T0-C} in the case that $\operatorname{rank} \Lambda
= 1$, result from more explicitly computing a variance, which here
involves integrating polynomials of degree $8$ on the complex sphere.  
It appears to be possible to improve Theorem \ref{T:DL-T0-C}, in such
a way that Theorem \ref{T:R1-T0-C} can be recovered as a special case,
by integrating polynomials of degree $8$ on the unitary group at a
corresponding step, using the Weingarten calculus.  (This is
circumvented in the proof of Theorem \ref{T:DL-T0-C} above using a
Poincar\'e inequality and Lipschitz estimate.) Details will appear in
future work.

\section{Random quantum states: proof of Theorem
  \ref{T:induced-states}}
\label{S:proofs-quantum}

\begin{proof}[Proof of Theorem \ref{T:induced-states}]
First observe that 
\[
X_j = \tr\big(\tr_1(ZZ^*)B_j\big)
 = \tr\big(ZZ^*(B_j\otimes I_s)\big)
 = \inprod{(B_j\otimes I_s)Z}{Z}.
\]
Now, 
\[
\inprod{B_j\otimes I_s}{B_k\otimes I_s}
=\tr(B_jB_k\otimes I_s)
=s\tr(B_jB_k)
=s\delta_{jk},
\]
and so if
$Y_j=\frac{1}{\sqrt{s}}\inprod{\left(B_j\otimes I_s\right)Z}{Z}$, then
Theorem \ref{T:R1-T0-C} applies, and
\[
W_1(\sqrt{ns(ns+1)}Y,g)\le C
\sum_{j=1}^d\norm{\frac{1}{\sqrt{s}}\left(B_j\otimes
    I_s\right)}_4^{2}.
\]
Since 
\[
\norm{\frac{1}{\sqrt{s}}\left(B_j\otimes I_s\right)}_4^2
=\frac{1}{s}\sqrt{\tr\left((B_j\otimes I_s)^4\right)}
=\frac{1}{\sqrt{s}}\sqrt{\tr\left(B_j^4\right)},
\]
this completes the proof.
\end{proof}

\section{Joint distributions of entries: proofs of Theorem
  \ref{T:DL-entries-C} and Theorem \ref{T:DL-semicircle}}
\label{S:proofs-entries}

\begin{proof}[Proof of Theorem \ref{T:DL-entries-C}]
  Let $E_{jk} \in M_n(\R)$ denote the matrix with $j,k$ entry equal to
  $1$ and all other entries $0$. We apply Corollary \ref{T:DL-B-C}, choosing all of the matrices $B_i$ to be elements of the standard basis of the space of Hermitian matrices: let $r\le d$ be the number of $B_i$ of the form $B_{jj}^D = E_{jj}$, with the remaining $d-r$ having the form 
  \[
  B_{jk}^R = \frac{1}{\sqrt{2}} (E_{jk} + E_{kj})
  \qquad \text{or} \qquad
  B_{jk}^I = \frac{i}{\sqrt{2}} (E_{jk} - E_{kj})
  \]
  for $1\le j<k\le n$. Thus $\tr (A B_{jj}^D) = a_{jj}$,
  $\tr (A B_{jk}^R) = \sqrt{2}\Re (a_{jk})$, and
  $\tr (A B_{jk}^I) = \sqrt{2}\Im (a_{jk})$. Observe that $B_{jk}^R$
  and $B_{jk}^I$ are traceless, while
  $\widetilde{B_{jj}^D} = B_{jj}^D - \frac{1}{n}I_n$.  It follows that
  in the setting of Corollary \ref{T:DL-B-C}, if we order the
  coefficient matrices so that those of the form $B_{jj}^D$ are listed
  first, then $\Sigma = I_d - \frac{1}{n} J_r$, where
  $J_r \in M_d(\R)$ consists of an $r\times r$ block of $1$s in the
  upper-left corner, with all other entries $0$.  Corollary \ref{T:DL-B-C} then
  implies that
  \begin{equation*}
    \begin{split}
      W_1\left(\frac{\sqrt{n^2-1}}{\norm{\Lambda}_{HS}} X, g \right) &
      \le W_1\left(\frac{\sqrt{n^2-1}}{\norm{\Lambda}_{HS}} X,
        \Sigma^{1/2} g \right)
      + W_1 \left(\Sigma^{1/2} g, g\right) \\
      & \le 8 d \sqrt{n} \bnorm{\Sigma^{1/2}}_{op}
      \frac{\norm{\Lambda}_{op}^2}{\norm{\Lambda}_{HS}^2} 
      + W_1 \left(\Sigma^{1/2} g, g\right).
    \end{split}
  \end{equation*}
  Now 
  \[
  W_1 \left(\Sigma^{1/2} g, g\right) \le \sup_{\abs{f}_L \le 1}
  \abs{\E f \bigl(\Sigma^{1/2} g\bigr) - \E f(g)}
  \le \norm{\Sigma^{1/2} - I_d}_{op}
  \E \norm{g} \le \sqrt{d} \norm{\Sigma^{1/2} - I_d}_{op}.
  \]
  From the description given above, it is immediate that $\Sigma$ has
  eigenvalues $1$ (with multiplicity $d-1$) and $1 - \frac{r}{n}$
  (with multiplicity $1$), so that $\norm{\Sigma^{1/2}}_{op} = 1$ and
  $\norm{\Sigma^{1/2} - I_d}_{op} = 1 - \sqrt{1-\frac{r}{n}} \le
  \frac{r}{n}$.  We thus obtain
  \begin{equation*}
      W_1\left(\frac{\sqrt{n^2-1}}{\norm{\Lambda}_{HS}} X, g \right)
      \le 8 d \sqrt{n}
      \frac{\norm{\Lambda}_{op}^2}{\norm{\Lambda}_{HS}^2} 
      + \frac{\sqrt{d}r}{n}.
  \end{equation*}
  The stated bound now follows, since $\norm{\Lambda}_{HS}^2 \le n
  \norm{\Lambda}_{op}^2$ and $r \le \max\{n,d\}\le \sqrt{dn}$.  
\end{proof}

\begin{proof}[Proof of Theorem \ref{T:DL-semicircle}]
  For $G$ a $k\times k$ GUE matrix, it was proved by Dallaporta
  \cite{Dal} that there is a constant $C$, independent of $k$, such that
\begin{equation}\label{E:GUE-sc}
\E W_1(\mu_{k^{-1/2} G},\rho_{sc}) \le C \frac{\sqrt{\log k}}{k}.
\end{equation}
The Hoffman--Wielandt inequality \cite[Theorem VI.4.1]{Bhatia} implies
that that $C \mapsto \mu_C$ is $\frac{1}{\sqrt{k}}$-Lipschitz for $k\times
k$ normal matrices (taking $W_1$ as the metric on probability measures
and the Hilbert-Schmidt distance on matrices),
so for any coupling of normal random matrices $M_1$ and $M_2$,
\begin{equation*}
    \E W_1(\mu_{M_1}, \mu_{M_2}) \le\frac{1}{\sqrt{k}}\E\norm{M_1-M_2}_{HS},
\end{equation*}
and by taking infimum over couplings,
\[\E W_1(\mu_{M_1}, \mu_{M_2}) \le\frac{1}{\sqrt{k}}W_1(M_1,M_2).\]
Writing $M = \frac{\sqrt{n^2-1}}{\norm{\Lambda}_{HS}} B$, it follows
from Theorem \ref{T:submatrix} that
\[
\E W_1(\mu_{k^{-1/2}M}, \mu_{k^{-1/2}G}) \le 18 k^2 \sqrt{n}
\frac{\norm{\Lambda}_{op}^2}{\norm{\Lambda}_{HS}^2}.
\]
Combining this with the estimate \eqref{E:GUE-sc} yields
\[
\E W_1 (\mu_{k^{-1/2}M}, \rho_{sc}) \le 18 k^2 \sqrt{n}
\frac{\norm{\Lambda}_{op}^2}{\norm{\Lambda}_{HS}^2} + C
\frac{\sqrt{\log k}}{k}
= \frac{18 k^2 \sqrt{n}}{\srank \Lambda} + C \frac{\sqrt{\log k}}{k},
\]
which is the first statement of the Theorem.

To prove the second statement, consider the mapping $U \mapsto A
\mapsto B$, where  $A=U\Lambda  U^*$ and $B$ is  the upper-left
$k\times k$ submatrix of $A$.  Observe that 
\begin{equation}
  \begin{split}
    \Bigl\vert\norm{U_1\Lambda U_1^*}_{HS}-\norm{U_2\Lambda
      U_2^*}_{HS}\Bigr\vert
    & \le \norm{(U_1-U_2)\Lambda
      U_1^*}_{HS}+\norm{U_2\Lambda(U_1^*-U_2^*)}_{HS} \\
    & \le 2\norm{\Lambda}_{op}\norm{U_1-U_2}_{HS},
  \end{split}
\end{equation}
and
$A\mapsto B$ is a projection, so $B$ is a
$2\norm{\Lambda}_{op}$-Lipschitz function of $U$.  It follows that
$W_1(\mu_{k^{-1/2}M},\rho_{sc})$ is a
$2 \norm{\Lambda}_{op}\frac{\sqrt{n^2-1}}{\norm{\Lambda}_{HS}}
\frac{1}{k}$-Lipschitz
function of $U$.  Lipschitz functions on $\Unitary{n}$ satisfy the
sub-Gaussian concentration inequality
\begin{equation}
  \label{E:unitary-concentration}
\Prob[F(U)\ge\E
F(U)+t]\le\exp\left[-\frac{nt^2}{12\abs{F}_L^2}\right],
\end{equation}
(see \cite[Corollary 17]{MeMe-powers}), and so
\begin{equation*}
  \begin{split}
    \Prob \left[W_1(\mu_{k^{-1/2}M},\rho_{sc}) \ge \E
      W_1(\mu_{k^{-1/2}M},\rho_{sc}) + t\right] & \le \exp\left[-
      \frac{nt^2}{12} \left(2 \frac{\sqrt{n^2-1}}{k}
        \frac{\norm{\Lambda}_{op}}{\norm{\Lambda}_{HS}}\right)^{-2}\right]
    \\
    & \le \exp\left[- \frac{k^2 (\srank \Lambda) t^2}{48 n}\right].
    \qedhere
\end{split}
\end{equation*}
\end{proof}

\section{Invariant ensembles: proof of Theorem \ref{T:RL-B-C} and
  Corollary \ref{T:invariant-C}}
\label{S:proofs-invariant}

\begin{proof}[Proof of Theorem \ref{T:RL-B-C}]
  As discussed prior to the statement of the theorem, the random
  matrix $A$ has the same distribution as $U \Lambda U^*$, where
  $\Lambda$ is a real diagonal random matrix with the same eigenvalues
  as $A$ and $U$ is Haar-distributed in the unitary group, independent
  from $\Lambda$.

  Observe that since $\tr B_i = 0$, $\tr AB_i = \tr \widetilde{A}B_i$,
  and that
  $\bnorm{ \widetilde{A} }_{HS} = \bnorm{ \widetilde{\Lambda} }_{HS}$
  and
  $\bnorm{ \widetilde{A} }_{op} = \bnorm{ \widetilde{\Lambda} }_{op}$.
  Now
  \begin{equation*}
    \begin{split}
      W_1\left(X,\frac{\bnorm{ \widetilde{A} }_{HS}}{\sqrt{n^2-1}}
        g\right) & = \sup_{\abs{f}_L\le 1} \abs{\E f(X) - \E f\left(\frac{\bnorm{
            \widetilde{A} }_{HS}}{\sqrt{n^2-1}}
          g\right)} \\
      & = \sup_{\abs{f}_L\le 1} \abs{\E \left(\E \left[ f(X) - f\left(\frac{\bnorm{
              \widetilde{A} }_{HS}}{\sqrt{n^2-1}}
            g\right) \middle\vert \Lambda \right]\right)} \\
      & \le \E \sup_{\abs{f}_L\le 1} \abs{\E \left[ f(X) - f\left(\frac{\bnorm{
              \widetilde{A} }_{HS}}{\sqrt{n^2-1}}
            g\right) \middle\vert \Lambda \right]} \\
      & \le \E \left(\frac{ 8}{\sqrt{n}} \frac{\bnorm{
          \widetilde{\Lambda}}_{op}^2}{\bnorm{
          \widetilde{\Lambda}}_{HS}}\right) \sum_{i=1}^d \frac{1}{\srank{B_i}},
    \end{split}
  \end{equation*}
  by Theorem \ref{T:DL-T0-C}. 

  Next note that
  \begin{equation*}
    \begin{split}
      W_1 \left(\frac{\bnorm{\widetilde{A}}_{HS}}{\sqrt{n^2-1}}
        g, \frac{\E \bnorm{\widetilde{A}}_{HS}}{\sqrt{n^2-1}}
        g\right)
      & = \sup_{\abs{f}_L\le 1} \abs{\E f \left(\frac{\bnorm{\widetilde{A}}_{HS}}{\sqrt{n^2-1}} g\right) - \E f \left(\frac{\E \bnorm{\widetilde{A}}_{HS}}{\sqrt{n^2-1}} g\right) } \\
      & \le \E \norm{\frac{\bnorm{\widetilde{A}}_{HS}}{\sqrt{n^2-1}} g - \frac{\E \bnorm{\widetilde{A}}_{HS}}{\sqrt{n^2-1}} g} \\
      & = \frac{\E \left(\abs{\bnorm{\widetilde{A}}_{HS} - \E \bnorm{\widetilde{A}}_{HS}} \norm{g}\right)}{\sqrt{n^2-1}} \\
      & = \frac{\E \abs{\bnorm{\widetilde{A}}_{HS} - \E \bnorm{\widetilde{A}}_{HS}} \E \norm{g}}{\sqrt{n^2-1}} \le \frac{\sqrt{d}\E \abs{\bnorm{ \widetilde{A}}_{HS} - \E \bnorm{\widetilde{A}}_{HS}}}{\sqrt{n^2-1}},
    \end{split}
  \end{equation*}
  where we have used the independence of $g$ and $A$ in the last
  equality.  The second statement of the theorem now follows from the
  triangle inequality for $W_1$ together with renormalization of $X$
  and $\frac{\E \lVert \widetilde{A}\rVert_{HS}}{\sqrt{n^2-1}}g$ by
  $\frac{\sqrt{n^2-1}}{\E\bnorm{\widetilde{A}}_{HS}}$.
\end{proof}

\begin{proof}[Proof of Corollary \ref{T:invariant-C}]
  The assumptions on the distribution of $A$ imply that the
  distribution of satisfies a logarithmic Sobolev inequality, and
  hence a strong concentration of measure property; cf.\ \cite[Section
  5.1]{Ledoux}.  In particular, for any $1$-Lipschitz function
  $F:M_n^{sa}(\C) \to \R$ (with respect to the Hilbert--Schmidt norm),
  \begin{equation}
    \label{E:lsi}
  \Prob\left[ F(A) - \E F(A) \ge t\right] \le e^{- n \alpha t^2/2}
  \end{equation}
  for all $t > 0$.  From this it can be proved that
  \[
  \beta_1 \sqrt{n} \le \E \bnorm{\widetilde{A}}_{HS} \le \beta_2
  \sqrt{n}
  \qquad \text{and} \qquad
  \gamma_1 \le \E \bnorm{\widetilde{A}}_{op} \le \gamma_2,
  \]
  where $\beta_1, \beta_2, \gamma_1, \gamma_2 > 0$ depend only on
  $\alpha$ (see \cite{MeSz}).  (For simplicity of exposition, in the
  remainder of of this proof, all constants may depend on $\alpha$.)

  It follows directly from \eqref{E:lsi} that
  \[
  \E \abs{\bnorm{\widetilde{A}}_{HS} - \E \bnorm{\widetilde{A}}_{HS}}
  \le C.
  \]
  It therefore suffices to show that
  \begin{equation}
    \label{E:norm-ratio}
  \E
  \left(\frac{\bnorm{\widetilde{A}}_{op}^2}{\bnorm{\widetilde{A}}_{HS}}\right)
    \le \frac{C}{\sqrt{n}}.
  \end{equation}

  Firstly, for $t \ge \frac{8\gamma_2^2}{\beta_1\sqrt{n}}$,
  \begin{equation}
    \label{E:middle-t}
    \begin{split}
      \Prob
      \left[\frac{\bnorm{\widetilde{A}}_{op}^2}{\bnorm{\widetilde{A}}_{HS}}
        \ge t \right] 
      & = \Prob \left[ \bnorm{\widetilde{A}}_{op}^2
        \ge t
        \bnorm{\widetilde{A}}_{HS} \right] \\
      & \le \Prob \left[ \bnorm{\widetilde{A}}_{op}^2 \ge \frac{\beta_1 \sqrt{n} }{2}
        t\right] +
      \Prob \left[ \bnorm{\widetilde{A}}_{HS} < \frac{\beta_1 \sqrt{n} }{2}
        \right]
      \\
      & \le \Prob \left[ \bnorm{\widetilde{A}}_{op} - \E
        \bnorm{\widetilde{A}}_{op} \ge \left(\frac{\beta_1 \sqrt{n}
          }{2} t\right)^{1/2} - \gamma_2\right] \\
      & \qquad + \Prob\left[ - \bnorm{\widetilde{A}}_{HS} + \E
        \bnorm{\widetilde{A}}_{HS} > \frac{\beta_1 \sqrt{n} }{2}\right] \\
      & \le e^{-\alpha \beta_1 n^{3/2}t/8} + e^{-\alpha \beta_1^2 n^2/8}.
    \end{split}
  \end{equation}
  Next, since $\bnorm{\widetilde{A}}_{op} \le
  \bnorm{\widetilde{A}}_{HS}$,
  \begin{equation}
    \label{E:big-t}
    \begin{split}
      \Prob
      \left[\frac{\bnorm{\widetilde{A}}_{op}^2}{\bnorm{\widetilde{A}}_{HS}}
        \ge t \right] 
      \le \Prob \left[ \bnorm{\widetilde{A}}_{op}
        \ge t \right]
      \le \Prob \left[ \bnorm{\widetilde{A}}_{op} - \E
        \bnorm{\widetilde{A}}_{op} \ge t - \gamma_2\right] 
      \le e^{-\alpha n t^2 / 8}
    \end{split}
  \end{equation}
  for $t \ge \frac{1}{2} \gamma_2$.

  We now estimate
  \[
  \E
  \left(\frac{\bnorm{\widetilde{A}}_{op}^2}{\bnorm{\widetilde{A}}_{HS}}\right)
  = \int_0^\infty \Prob
  \left[\frac{\bnorm{\widetilde{A}}_{op}^2}{\bnorm{\widetilde{A}}_{HS}}
    \ge t \right] \ dt
  \]
  using \eqref{E:middle-t} to bound the integrand for
  $\frac{8\gamma_2^2}{\beta_1\sqrt{n}} \le t \le \frac{1}{2}
  \gamma_2$,
  \eqref{E:big-t} for $t \ge \frac{1}{2} \gamma_2$, and the trivial
  upper bound of $1$ for
  $0 \le t \le \frac{8\gamma_2^2}{\beta_1\sqrt{n}}$.
\end{proof}

\section{Diagonal entries: proof of Theorem \ref{T:Schur-Horn}}
\label{S:proofs-Schur-Horn}

\begin{proof}[Proof of Theorem \ref{T:Schur-Horn}]
  Assume without loss of generality that for all $n$,
  $\tr \Lambda_n = 0$; this only amounts to writing $\Lambda_n$
  instead of $\widetilde{\Lambda_n}$.  In this case
  $\int x \ d\mu_n(x)=0$ for each $n$ (and so $m=0$).  Let
\[
\sigma_n^2 = \int x^2 \ d\mu_n(x) = \frac{1}{n^2} \norm{\Lambda_n}_{HS}^2
\qquad \text{and} \qquad
\sigma^2 = \int x^2 \ d\mu(x);
\]
because we have assumed that $\mu_n \to \mu$ in $W_2$, we have that
$\sigma_n \to \sigma$.

First consider the mean measure $\E \nu_n$.  Given any test
function $f:\R \to \R$,
\[
  \E \int f \ d\nu_n = \frac{1}{n} \sum_{i=1}^n \E f(a_{ii}^{(n)}).
\]
Now for any $i$, $a_{ii}^{(n)} = \inprod{A_n e_i}{e_i} =
\inprod{\Lambda_n U_n^* e_i}{U_n^* e_i}$, and $U_n^* e_i$ is
distributed uniformly on the unit sphere in $\C^n$.  Therefore,
\[
  \E \int f \ d\nu_n = \E f(\inprod{\Lambda_n Z}{Z}),
\]
where $Z$ is uniformly distributed on the unit sphere in $\C^n$; that
is, $\E \nu_n$ is precisely the distribution of
$\inprod{\Lambda_n Z}{Z}$.  It follows immediately from the $d=1$ case
of Theorem \ref{T:R1-T0-C} that
\begin{equation}
  \label{E:nu_n-gaussian}
  W_1\left(\inprod{\Lambda_n Z}{Z},\sigma_n \sqrt{\frac{n}{n+1}} g\right)\le
  C \frac{\norm{\Lambda_n}_4^2}{n^{2}\sigma_n},
\end{equation}
making use of the fact that 
$\norm{\Lambda_n}_{HS}^2=n^2\sigma_n^2$.

We now apply the concentration of measure phenomenon on
$\Unitary{n}$.  Note that 
if $A = U \Lambda U^*$, $B = V \Lambda V^*$ for $U, V \in
\Unitary{n}$,  then
\begin{equation}
  \label{E:diagonal-Lipschitz}
  \begin{split}
  \sqrt{\sum_{i=1}^n \abs{a_{ii} - b_{ii}}^2}
  & \le \norm{A - B}_{HS}
  = \norm{U \Lambda (U - V)^* + (V - U) \Lambda V^*}_{HS} 
  \\ & \le 2 \norm{\Lambda}_{op} \norm{U-V}_{HS}.
\end{split}
\end{equation}
If $f:\R \to \R$ is a $1$-Lipschitz test function, then it follows that
\[
  \abs{\frac{1}{n} \sum_{i=1}^n f(a_{ii}) - \frac{1}{n} \sum_{i=1}^n
  f(b_{ii})}
  \le \frac{1}{n} \sum_{i=1}^n \abs{a_{ii} - b_{ii}}
  \le \frac{2}{\sqrt{n}} \norm{\Lambda}_{op} \norm{U-V}_{HS};
\]
that is, if $\nu=\frac{1}{n}\sum_{j=1}^n\delta_{a_{jj}}$ for
$A=U\Lambda U^*$, then $U\mapsto \int f \ d\nu$ is a $\frac{2}{\sqrt{n}}
\norm{\Lambda}_{op}$-Lipschitz function of $U$.
Then \eqref{E:unitary-concentration} implies that
\begin{equation}
\label{E:dnu-concentration}
\Prob\left[\abs{\int f d\nu_n-\E \int f d\nu_n}\ge t\right]
\le
2\exp\left[-\frac{n^2t^2}{48\norm{\Lambda_n}^2_{op}}\right].
\end{equation}

Suppose now that $\norm{\Lambda_n}_{op} = o(n)$. Then
$\norm{\Lambda_n}_4^2 \le \norm{\Lambda_n}_{HS} \sqrt{\Lambda_n}_{op}
= n \sigma_n \norm{\Lambda_n}_{op}$,
and so by \eqref{E:nu_n-gaussian} and the fact that
$\sigma_n\to\sigma$,
\[
\int fd\nu_n\to\E f(\sigma g)
\]
for every Lipschitz function $f:\R \to \R$.  It follows from
\eqref{E:dnu-concentration} that for some $\eps(n) \to 0$,
\[
\Prob \left[ \abs{ \int f \ d\nu_n - \E f(\sigma{Z})} \ge  t + \eps(n)
\right]
\to 0
\]
for each fixed $t > 0$, so that $\nu_n \to N(0,\sigma)$ weakly in
probability.  Moreover, if $\norm{\Lambda_n}_{op}^2 = o
\bigl(\frac{n}{\sqrt{\log n}}\bigr)$, then
\[
\Prob \left[ \abs{ \int f \ d\nu_n - \E f(\sigma{Z})} \ge  \frac{7
    \norm{\Lambda_n}_{op}}{n} \sqrt{\log n} + \eps(n)
\right]
\le 2 n^{-49/48},
\] 
so $\nu_n \to N(0,\sigma)$ weakly almost surely by the Borel--Cantelli
Lemma.

\medskip

Next, by \eqref{E:diagonal-Lipschitz}, for each $i$,
$U \mapsto a_{ii}$ and $U \mapsto -a_{ii}$ are $2 \norm{\Lambda}_{op}$-Lipschitz functions on
$\Unitary{n}$, and so \eqref{E:unitary-concentration} implies that
\[
\Prob \left[|a_{ii}| \ge t \right] \le 2e^{-n t^2 / 48 \norm{\Lambda_n}_{op}^2}.
\]
Therefore
\[
\Prob \Bigl[\max_{1\le i \le n} \left|a_{ii}\right| \ge t \Bigr]
\le 2n  e^{-n t^2 / 48 \norm{\Lambda_n}_{op}^2},
\]
which implies that
\begin{equation*}
  \begin{split}
    \E \max_{1 \le i \le n} |a_{ii}| & \le \int_0^\infty \min\left\{1, 2n  e^{-n t^2 /
        48 \norm{\Lambda_n}_{op}^2}\right\} \ dt 
    \le C \norm{\Lambda_n}_{op} \sqrt{\frac{\log n}{n}},
  \end{split}
\end{equation*}
and by the Borel--Cantelli lemma, with probability 1,
\[
\max_{1\le i \le n} |a_{ii}| \le 10 \norm{\Lambda_n}_{op} \sqrt{\frac{\log
    n}{n}}
\]
for all sufficiently large $n$.

On the other hand, given $1 \le d \le n$, by Theorem
\ref{T:submatrix},
\[
W_1 \left(\frac{\sqrt{n^2-1}}{\norm{\Lambda_n}_{HS}}(a_{11}, \dots,
  a_{dd}), (g_1, \dots, g_d)\right) \le 9 d \sqrt{n}
\frac{\norm{\Lambda_n}_{op}^2}{\norm{\Lambda_n}_{HS}^2}
= \frac{9 d \norm{\Lambda_n}_{op}^2}{n^{3/2}\sigma_n^2}.
\]
Since $\max_{1\le i \le d} x_i$ is a $1$-Lipschitz function of $x \in
\R^d$, it follows that
\begin{equation*}
    \E \max_{1 \le i \le d} a_{ii}
    \ge \sigma_n \left(\E \max_{1\le
        i \le d} g_{i} -\frac{9 d \norm{\Lambda_n}_{op}^2}{n^{3/2}\sigma_n^2}
    \right).
\end{equation*}
It is well known that
$\E \max_{1 \le i \le d} g_i \ge c \sqrt{\log d}$. If we assume now
that $\norm{\Lambda_n}_{op} \le K \sqrt{n}$ then choosing
$d = \lfloor \sqrt{n} \rfloor$ completes the proof.
\end{proof}

\section*{Acknowledgements}

This research was partially supported by grants from the U.S.\
National Science Foundation (DMS-1612589 to E.M.)  and the Simons
Foundation (\#315593 to M.M.). Some of this work was carried out while
M.M.\ attended the Institut Henri Poincar\'e research trimester
``Analysis in Quantum Information Theory'', partially supported by NSF
grant DMS-1700168.  The authors thank Stanis{\l}aw Szarek and Michael
Walter for helpful discussions, Yan Fyodorov and Micha{\l} Oszmaniec
for references, and Ion Nechita particularly for outlining the
argument at the end of Section \ref{S:joint}.

\bibliographystyle{plain}
\bibliography{prescribed-eigenvalues}

\begin{thebibliography}{10}

\bibitem{AuSz}
G.~Aubrun and S.~Szarek.
\newblock {\em Alice and {B}ob Meet {B}anach. The Interface of Asymptotic
  Geometric Analysis and Quantum Information Theory}, volume 223 of {\em
  Mathematical Surveys and Monographs}.
\newblock American Mathematical Society, Providence, RI, 2017.

\bibitem{BeGe}
F.~Benaych-Georges.
\newblock Exponential bounds for the support convergence in the single ring
  theorem.
\newblock {\em J. Funct. Anal.}, 268(11):3492--3507, 2015.

\bibitem{BeZy}
I.~Bengtsson and K.~\.Zyczkowski.
\newblock {\em Geometry of Quantum States. An Introduction to Quantum
  Entanglement}.
\newblock Cambridge University Press, Cambridge, second edition, 2017.

\bibitem{Bhatia}
R.~Bhatia.
\newblock {\em Matrix Analysis}, volume 169 of {\em Graduate Texts in
  Mathematics}.
\newblock Springer-Verlag, New York, 1997.

\bibitem{ChMe}
S.~Chatterjee and E.~Meckes.
\newblock Multivariate normal approximation using exchangeable pairs.
\newblock {\em ALEA Lat. Am. J. Probab. Math. Stat.}, 4:257--283, 2008.

\bibitem{Collins}
B.~Collins.
\newblock Moments and cumulants of polynomial random variables on unitary
  groups, the {I}tzykson-{Z}uber integral, and free probability.
\newblock {\em Int. Math. Res. Not.}, (17):953--982, 2003.

\bibitem{CoNe-rqc}
B.~Collins and I.~Nechita.
\newblock Random quantum channels {I}: graphical calculus and the {B}ell state
  phenomenon.
\newblock {\em Comm. Math. Phys.}, 297(2):345--370, 2010.

\bibitem{CoSn}
B.~Collins and P.~\'{S}niady.
\newblock Integration with respect to the {H}aar measure on unitary, orthogonal
  and symplectic group.
\newblock {\em Comm. Math. Phys.}, 264(3):773--795, 2006.

\bibitem{Dal}
S.~Dallaporta.
\newblock Eigenvalue variance bounds for {W}igner and covariance random
  matrices.
\newblock {\em Random Matrices Theory Appl.}, 1(3):1250007, 28, 2012.

\bibitem{Deift}
P.~Deift.
\newblock {\em Orthogonal Polynomials and Random Matrices: a
  {R}iemann-{H}ilbert Approach}, volume~3 of {\em Courant Lecture Notes in
  Mathematics}.
\newblock New York University, Courant Institute of Mathematical Sciences, New
  York; American Mathematical Society, Providence, RI, 1999.

\bibitem{DeGi}
P.~Deift and D.~Gioev.
\newblock {\em Random Matrix Theory: Invariant Ensembles and Universality},
  volume~18 of {\em Courant Lecture Notes in Mathematics}.
\newblock Courant Institute of Mathematical Sciences, New York; American
  Mathematical Society, Providence, RI, 2009.

\bibitem{DoSt}
C.~D\"obler and M.~Stolz.
\newblock Stein's method and the multivariate {CLT} for traces of powers on the
  classical compact groups.
\newblock {\em Electron. J. Probab.}, 16:no. 86, 2375--2405, 2011.

\bibitem{Feze}
J.~Feinberg and A.~Zee.
\newblock Non-{G}aussian non-{H}ermitian random matrix theory: phase transition
  and addition formalism.
\newblock {\em Nuclear Phys. B}, 501(3):643--669, 1997.

\bibitem{Fol}
G.~Folland.
\newblock {\em Real Analysis. Modern Techniques and Their Applications}.
\newblock Pure and Applied Mathematics (New York). John Wiley \& Sons, Inc.,
  New York, second edition, 1999.

\bibitem{FySo}
Y.~Fyodorov and H.-J. Sommers.
\newblock Random matrices close to {H}ermitian or unitary: overview of methods
  and results.
\newblock {\em J. Phys. A}, 36(12):3303--3347, 2003.
\newblock Random matrix theory.

\bibitem{GuKrZe}
A.~Guionnet, M.~Krishnapur, and O.~Zeitouni.
\newblock The single ring theorem.
\newblock {\em Ann. of Math. (2)}, 174(2):1189--1217, 2011.

\bibitem{GuMa}
A.~Guionnet and M.~Ma\"{\i}da.
\newblock A {F}ourier view on the {$R$}-transform and related asymptotics of
  spherical integrals.
\newblock {\em J. Funct. Anal.}, 222(2):435--490, 2005.

\bibitem{GuZe-lda}
A.~Guionnet and O.~Zeitouni.
\newblock Large deviations asymptotics for spherical integrals.
\newblock {\em J. Funct. Anal.}, 188(2):461--515, 2002.

\bibitem{GuZe-srt}
A.~Guionnet and O.~Zeitouni.
\newblock Support convergence in the single ring theorem.
\newblock {\em Probab. Theory Related Fields}, 154(3-4):661--675, 2012.

\bibitem{Harish-Chandra}
Harish-Chandra.
\newblock Differential operators on a semisimple {L}ie algebra.
\newblock {\em Amer. J. Math.}, 79:87--120, 1957.

\bibitem{JoSm}
C.~H. Joyner and U.~Smilansky.
\newblock A random walk approach to linear statistics in random tournament
  ensembles.
\newblock {\em Electron. J. Probab.}, 23:Paper No. 80, 37, 2018.

\bibitem{JoSmWe}
C.~H. Joyner, U.~Smilansky, and H.~A. Weidenm\"{u}ller.
\newblock Spectral statistics of the uni-modular ensemble.
\newblock {\em J. Phys. A}, 50(38):385101, 35, 2017.

\bibitem{KeMeSi}
J.~P. Keating, F.~Mezzadri, and B.~Singphu.
\newblock Rate of convergence of linear functions on the unitary group.
\newblock {\em Journal of Physics A: Mathematical and Theoretical},
  44(3):035204, 2010.

\bibitem{Ledoux}
M.~Ledoux.
\newblock {\em The Concentration of Measure Phenomenon}, volume~89 of {\em
  Mathematical Surveys and Monographs}.
\newblock American Mathematical Society, Providence, RI, 2001.

\bibitem{MaOlAr}
A.~Marshall, I.~Olkin, and B.~Arnold.
\newblock {\em Inequalities: Theory of Majorization and its Applications}.
\newblock Springer Series in Statistics. Springer, New York, second edition,
  2011.

\bibitem{EM-linear}
E.~Meckes.
\newblock Linear functions on the classical matrix groups.
\newblock {\em Trans. Amer. Math. Soc.}, 360(10):5355--5366, 2008.

\bibitem{EM-Stein-multi}
E.~Meckes.
\newblock On {S}tein's method for multivariate normal approximation.
\newblock In {\em High Dimensional Probability {V}: the {L}uminy Volume},
  volume~5 of {\em Inst. Math. Stat. (IMS) Collect.}, pages 153--178. Inst.
  Math. Statist., Beachwood, OH, 2009.

\bibitem{EM-eigenfunctions}
E.~Meckes.
\newblock On the approximate normality of eigenfunctions of the {L}aplacian.
\newblock {\em Trans. Amer. Math. Soc.}, 361(10):5377--5399, 2009.

\bibitem{EM-book}
E.~Meckes.
\newblock {\em The Random Matrix Theory of the Classical Compact Groups},
  volume 218 of {\em Cambridge Tracts in Mathematics}.
\newblock Cambridge University Press, Cambridge, 2019.

\bibitem{MeMe-concentration}
E.~Meckes and M.~Meckes.
\newblock Concentration and convergence rates for spectral measures of random
  matrices.
\newblock {\em Probab. Theory Related Fields}, 156(1-2):145--164, 2013.

\bibitem{MeMe-powers}
E.~Meckes and M.~Meckes.
\newblock Spectral measures of powers of random matrices.
\newblock {\em Electron. Commun. Probab.}, 18:no. 78, 13, 2013.

\bibitem{MeSz}
M.~Meckes and S.~Szarek.
\newblock Concentration for noncommutative polynomials in random matrices.
\newblock {\em Proc. Amer. Math. Soc.}, 140(5):1803--1813, 2012.

\bibitem{Nechita}
I.~Nechita.
\newblock Asymptotics of random density matrices.
\newblock {\em Ann. Henri Poincar\'e}, 8(8):1521--1538, 2007.

\bibitem{NiSp}
A.~Nica and R.~Speicher.
\newblock {\em Lectures on the Combinatorics of Free Probability}, volume 335
  of {\em London Mathematical Society Lecture Note Series}.
\newblock Cambridge University Press, Cambridge, 2006.

\bibitem{OsAuGoKoAcLe}
M.~Oszmaniec, R.~Augusiak, C.~Gogolin, J.~Ko\l{}ody\ifmmode~\acute{n}\else
  \'{n}\fi{}ski, A.~Ac\'{\i}n, and M.~Lewenstein.
\newblock Random bosonic states for robust quantum metrology.
\newblock {\em Phys. Rev. X}, 6:041044, Dec 2016.

\bibitem{OsKu}
M.~Oszmaniec and M.~Ku\ifmmode~\acute{s}\else \'{s}\fi{}.
\newblock Fraction of isospectral states exhibiting quantum correlations.
\newblock {\em Phys. Rev. A}, 90:010302, Jul 2014.

\bibitem{PaSh}
L.~Pastur and M.~Shcherbina.
\newblock {\em Eigenvalue Distribution of Large Random Matrices}, volume 171 of
  {\em Mathematical Surveys and Monographs}.
\newblock American Mathematical Society, Providence, RI, 2011.

\bibitem{ReRo}
G.~Reinert and A.~R\"ollin.
\newblock Multivariate normal approximation with {S}tein's method of
  exchangeable pairs under a general linearity condition.
\newblock {\em Ann. Probab.}, 37(6):2150--2173, 2009.

\bibitem{RuVe}
M.~Rudelson and R.~Vershynin.
\newblock Invertibility of random matrices: unitary and orthogonal
  perturbations.
\newblock {\em J. Amer. Math. Soc.}, 27(2):293--338, 2014.

\bibitem{Saloff-Coste}
L.~Saloff-Coste.
\newblock Precise estimates on the rate at which certain diffusions tend to
  equilibrium.
\newblock {\em Math. Z.}, 217(4):641--677, 1994.

\bibitem{SaFySo}
D.~Savin, Y.~Fyodorov, and H.-J. Sommers.
\newblock Reducing nonideal to ideal coupling in random matrix description of
  chaotic scattering: Application to the time-delay problem.
\newblock {\em Phys. Rev. E}, 63:035202, Feb 2001.

\bibitem{Stein}
C.~Stein.
\newblock The accuracy of the normal approximation to the distribution of the
  traces of powers of random orthogonal matrices.
\newblock Technical Report 470, Stanford University Dept. of Statistics, 1995.

\bibitem{Stolz}
M.~Stolz.
\newblock Stein's method and central limit theorems for {H}aar distributed
  orthogonal matrices: Some recent developments.
\newblock In Gerold Alsmeyer and Matthias L{\"o}we, editors, {\em Random
  Matrices and Iterated Random Functions}, pages 73--88, Berlin, Heidelberg,
  2013. Springer Berlin Heidelberg.

\bibitem{Villani}
C.~Villani.
\newblock {\em Optimal Transport, Old and New}, volume 338 of {\em Grundlehren
  der Mathematischen Wissenschaften [Fundamental Principles of Mathematical
  Sciences]}.
\newblock Springer-Verlag, Berlin, 2009.

\bibitem{Voiculescu}
D.~Voiculescu.
\newblock Limit laws for random matrices and free products.
\newblock {\em Invent. Math.}, 104(1):201--220, 1991.

\bibitem{WeFy}
Y.~Wei and Y.~Fyodorov.
\newblock On the mean density of complex eigenvalues for an ensemble of random
  matrices with prescribed singular values.
\newblock {\em J. Phys. A}, 41(50):502001, 9, 2008.

\bibitem{Wootters}
W.~K. Wootters.
\newblock Random quantum states.
\newblock {\em Foundations of Physics}, 20(11):1365--1378, Nov 1990.

\bibitem{ZySo}
K.~{\.Z}yczkowski and H.~Sommers.
\newblock Induced measures in the space of mixed quantum states.
\newblock {\em J. Phys. A}, 34(35):7111--7125, 2001.

\end{thebibliography}

\end{document}